\documentclass[12pt]{article}

\usepackage[T1]{fontenc}
\usepackage[utf8]{inputenc}
\usepackage[margin=1.1in]{geometry}
\usepackage{amsmath,amssymb,mathtools}
\usepackage{amsthm}
\usepackage[english]{babel}
\usepackage{graphicx}
\usepackage{xcolor}
\usepackage{float}
\usepackage[section]{placeins}
\usepackage[numbers,sort&compress]{natbib}
\usepackage[hidelinks]{hyperref}
\usepackage{bookmark}
\usepackage{adjustbox}
\usepackage{tikz-cd}
\usepackage{circuitikz}
\usepackage{comment}
\usepackage{authblk}

\usepackage{chngcntr}
\newcommand{\GLESfigure}[1]{%
  \begin{adjustbox}{width=1.30\linewidth,max totalheight=0.9\textheight,center}%
    #1%
  \end{adjustbox}%
}

\newtheorem{theorem}{Theorem}[section]
\newtheorem{lemma}[theorem]{Lemma}
\newtheorem{prop}[theorem]{Proposition}

\newtheorem{cor}[theorem]{Corollary}
\theoremstyle{definition}

\theoremstyle{remark}
\newtheorem{rem}[theorem]{Remark}
\theoremstyle{plain}

\newcommand{\Q}{\mathbb Q}

\newcommand{\Z}{\mathbb Z}
\newcommand{\F}{\mathbb F}
\newcommand{\res}{\mathrm{res}}

\makeatletter
\renewcommand{\verbatim@font}{\normalfont\footnotesize\ttfamily}
\makeatother

\title{An analog of Quillen--Venkov for eventually negligible classes and radicality for finite irreducible Coxeter groups}

\author{Constantin Cedillo Vayson de Pradenne}
\author{Matthew Gherman}
\author{Enoch Guo}
\author{Colin La}
\affil{California Institute of Technology, Department of Mathematics, Pasadena, CA 91125, U.S.A.}

\date{}

\begin{document}

\maketitle

\begin{abstract}
Let $G$ be a finite group, $M$ a $G$-module, and $F$ a field.
A continuous group homomorphism $i : \Gamma_L=\text{Gal}(L_{\text{sep}}/L) \rightarrow G$ yields a homomorphism $i^* : H^d(G,M) \rightarrow H^d(L,M)$ for every $d\in \mathbb{N}$.
An element $u \in H^d(G,M)$ is \textit{negligible over $F$} if $u\in\ker(i^*)$ for every $i$ and every field extension $L/F$.
If $M=\mathbb{F}_p$ for a prime $p$ with a trivial $G$-action, an element $u$ is \textit{eventually negligible over $F$} if $u^k$ is negligible over $F$ for some $k\geq1$.
The (eventually) negligible elements over $F$ form an ideal of the cohomology ring $H^*(G,\mathbb{F}_p)$, called the \textit{(eventually) negligible ideal}.
We prove an analog of Quillen--Venkov for eventually negligible elements.
An element $u\in H^*(G,\mathbb{F}_p)$ is eventually negligible over $F$ if and only if the restriction of $u$ to the cohomology of each elementary abelian $p$-subgroup of $G$ is eventually negligible over $F$.
Paired with applications of a Gysin-type long exact sequence, we identify generators of the negligible ideal over $\Q$ of the mod-2 cohomology of dihedral groups, cyclic $2$-groups, and generalized quaternion groups.
We prove that the negligible ideal over $\Q$ of the mod-2 cohomology of every finite irreducible Coxeter group is radical.
The result was previously known for Weyl groups; we prove it for the remaining non-Weyl types $I_2(m)$, $H_3$, and $H_4$.
\end{abstract}

\newpage
\tableofcontents

\section{Introduction}

\subsection{Background}

The notion of negligible cohomology was introduced by J.-P. Serre in \cite{Se} (see also \cite{SerreBook}).
Let $G$ be a finite group, $M$ a $G$-module, and $F$ a field.
A continuous group homomorphism $j:\Gamma_L=\text{Gal}(L_{\text{sep}}/L)\to G$ from the absolute Galois group $\Gamma_L$ of a field extension $L$ of $F$ to $G$ yields a homomorphism $j^*:H^d(G,M)\to H^d(L,M)$ of cohomology groups for every $d\geq0$.
An element $u\in H^d(G,M)$ is \textit{negligible} over $F$ if $u\in\ker(j^*)$ for all field extensions $L/F$ and all $j$. If $M$ is chosen so that one can form a cohomology ring via the cup product, an element is called \textit{eventually negligible} if some power of it is negligible. It is clear that the negligible elements over $F$ form a subgroup $H^d(G,M)_{\text{neg}}\subset H^d(G,M)$.

Negligible cohomology elements are related to the embedding problem.
Let $K/F$ be a finite Galois field extension with $G=\text{Gal}(K/F)$.
Let
\begin{equation}\label{ext}
\begin{tikzcd}
    1\arrow{r} &M\arrow{r} &G'\arrow{r}{f} &G\arrow{r} &1
\end{tikzcd}
\end{equation}
be an exact sequence of finite groups with $M$ abelian.
The conjugation $G'$-action on $M$ makes $M$ a $G$-module.
The embedding problem for the exact sequence \eqref{ext} and field extension $K/F$ is to find a Galois $G'$-algebra $K'$ over $F$ such that the restriction map $G'=\text{Gal}(K'/F)\to\text{Gal}(K/F)=G$ coincides with $f$.
Equivalently, one needs to find a lifting $\Gamma_F\to G'$ of the homomorphism $\Gamma_F\to G$ corresponding to the extension $K/F$.
Let $u\in H^2(G,M)$ be the class of the exact sequence \eqref{ext} and let $j:\Gamma_L\to G$ be the group homomorphism given by a field extension $L/F$.
Then $j$ extends to a homomorphism $\Gamma_L\to G'$ if and only if the pull-back of the sequence~\eqref{ext} under $j$ is split.
The latter is equivalent to the triviality of the image of $u$ under $j^*:H^2(G,M)\to H^2(L,M)$.
In other words, the class $u$ is negligible if and only if all embedding problems for the exact sequence \eqref{ext} and all $G$-Galois field extensions $L'/L$ of fields containing $F$ have solutions.
Recent progress has been made in \cite{MS24}. 
In Theorem~1.3, the authors determined a technique to compute degree two negligible classes with a non-trivial action on the coefficient module when $F$ contains sufficient roots of unity. 
The authors applied the technique to disprove a conjecture of Khare \cite{KL20}.

Negligible cohomology elements of $G$ are related to the invariants of $G$ as follows.
Let $M$ be an abelian group with trivial $G$-action.
Write $\text{Inv}^d(G,M)$ for the group of degree $d$ (normalized) invariants of $G$ with values in $M$ over a field $F$ (for the definition of the invariant see \cite{SerreBook}).
We have a homomorphism
\begin{equation*}
    \text{inv}:H^d(G,M)\to\text{Inv}^d(G,M),
\end{equation*}
taking an element $u\in H^d(G,M)$ to the invariant sending the class of a $G$-algebra $N$ over a field extension $L$ of $F$ (that is a $G$-torsor over Spec $L$) to the image of $u$ under the homomorphism
\begin{equation*}
    j^*:H^d(G,M)\to H^d(L,M)
\end{equation*}
with respect to the natural group homomorphism $j:\Gamma_L\to G$.
By the very definition of negligible elements, $H^d(G,M)_{\text{neg}}=\ker(\text{inv})$.

Let $M$ be an arbitrary $G$-module.
The group $H^d(G,M)_{\text{neg},F}$ is trivial if $d=0$ or $d=1$ by \cite{GM}.
Under mild hypotheses for $F$, the degree two negligible cohomology was determined in \cite{MS24}.
If $M$ is a trivial $G$-module, the degree two negligible cohomology was determined in \cite{GM}.
Serre provided explicit descriptions of mod-2 negligible classes over $\Q$ of elementary abelian 2-groups in \cite[Lemma 26.4]{SerreBook}.
The results were later extended in \cite[Theorems 2.1 and 2.3]{GhermanMerkurjevJPAA} where the mod-$p$ negligible over $F$ cohomology of elementary abelian $p$-groups was computed over an arbitrary field $F$.
In fact, \cite{GhermanMerkurjevJPAA} proves that the negligible ideal in $H^*(G,\mathbb{F}_p)$ is quite large.
In particular, the quotient ring by the ideal is finite if $F$ is not formally real or $G$ has odd order and otherwise it has Krull dimension 1.

Serre's original definition of negligible class in \cite{SerreBook} is equivalent to our definition of negligible over $\Q$.
By \cite[\S26]{SerreBook} and \cite[Proposition~2.3(2)]{GM}, a class that is negligible over $\Q$ is negligible over any other field $F$.
In what follows, unless noted otherwise, we take $M=\mathbb F_2$ with trivial action by $G$ and the ground field $F=\mathbb Q$.  

\subsection{Notation and definitions}

For $d\geq 0$ write the graded cohomology ring (with the cup product, graded-commutative) as $H^*(G, \mathbb F_2):=\bigoplus_{d\geq0}H^d(G, \mathbb F_2)$. We formally define the negligible classes $u$ and ideals $I_F$ by
\begin{align*}
    I_F^d(G,\mathbb F_2)&:=\{\,u\in H^d(G, \mathbb F_2):\ j^*(u)=0\ \text{for all }L/F\text{ and all }j:\Gamma_L\to G\,\} \\
    I_F(G,\mathbb F_2)&:=\bigoplus_{d\geq0} I_F^d(G,\mathbb F_2)\subseteq H^*(G, \mathbb F_2),
\end{align*}
and the eventually negligible ideal by
\begin{align*}
    N_F(G,\mathbb F_2)&:=\{\,x\in H^*(G, 
    \mathbb F_2):\ \exists\,k\geq1\ \text{with}\ x^k\in I_F(G,\mathbb F_2)\,\}\subseteq H^*(G, \mathbb F_2).
\end{align*}
Both $I_F(G,\mathbb F_2)$ and $N_F(G,\mathbb F_2)$ are ideals in the graded-commutative ring $H^*(G, \mathbb F_2)$. Writing $\sqrt{J}$ for the radical of an ideal $J\subseteq H^*(G, \mathbb F_2)$, we have
$$
N_F(G,\mathbb F_2)=\sqrt{\,I_F(G,\mathbb F_2)\,},
$$
or, equivalently, an element is eventually negligible if and only if some power lands in $I_F(G,\mathbb F_2)$.
We say the negligible ideal is \textit{radical} if $I_F(G,\mathbb F_2)=N_F(G,\mathbb F_2)$.
We have $F=\mathbb Q$ throughout, so we abbreviate
$$
I(G,\mathbb F_2):=I_{\mathbb Q}(G,\mathbb F_2), \qquad N(G,\mathbb F_2):=N_{\mathbb Q}(G,\mathbb F_2).$$
For a subgroup $E\le G$ we use the standard restriction and corestriction maps
$$
\mathrm{res}_E^G:H^*(G,\mathbb F_2)\to H^*(E,\mathbb F_2),\qquad \mathrm{cor}_E^G:H^*(E,\mathbb F_2)\to H^*(G,\mathbb F_2),
$$
abbreviated to $\mathrm{res}$ and $\mathrm{cor}$ when the groups are clear.

It is known that the (eventually) negligible classes of the cohomology ring $H^*(G,M)$ in fact form an ideal \cite{GhermanMerkurjevJPAA}. We now give special attention to the negligible cohomology of Coxeter groups. A \emph{Coxeter group} is a group $ W $ defined by a presentation of the form
\[
W = \left\langle s_1, s_2, \dots, s_n \,\middle|\, (s_i s_j)^{m_{ij}} = 1 \text{ for } 1 \le i,j \le n \right\rangle,
\]
where $ m_{ii} = 1 $ for all $ i $, so $ s_i^2 = 1 $, and $ m_{ij} = m_{ji} \in \mathbb{Z}_{\ge 2} \cup \{\infty\} $ for $ i \ne j $. Note that $(s_is_j)^{m_{ij}}=1$ whenever $m_{i,j}<\infty$. 
One goal of this paper is to prove the following.
\begin{theorem}\label{conjecture}
       All finite irreducible Coxeter groups have radical mod-$2$ negligible ideal over $\Q$.
\end{theorem}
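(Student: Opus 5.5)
The proof goes through the classification of finite irreducible Coxeter groups: types $A_n$, $B_n$, $D_n$, $E_6,E_7,E_8$, $F_4$, $G_2$, $H_3$, $H_4$ and $I_2(m)$. The Weyl types are already known to have radical negligible ideal over $\Q$ by the earlier literature cited in the abstract, so I would invoke that and reduce to $I_2(m)$, $H_3$ and $H_4$. The main tool is the announced analog of Quillen--Venkov. A class $u\in H^*(G,\F_2)$ lies in $N(G,\F_2)$ if and only if $\res^G_E(u)\in N(E,\F_2)$ for every elementary abelian $2$-subgroup $E$. For elementary abelian groups, the negligible ideal over $\Q$ is known explicitly by Serre's Lemma 26.4 in \cite{SerreBook} and by \cite{GhermanMerkurjevJPAA}. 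In particular, the eventually negligible ideal of $H^*(E,\F_2)=\F_2[t_1,\dots,t_r]$ is generated by the products $t_i t_j(t_i+t_j)$-type relations coming from $(a)\cup(a)=(a)\cup(-1)$. Concretely, $N(E)$ is the ideal of classes vanishing on all "rank one through $-1$" specializations, so it is computable.

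\textbf{Type $I_2(m)$.} Here $W$ is dihedral of order $2m$. If $m$ is odd, the Sylow $2$-subgroup is $\Z/2$. Then $H^*(W,\F_2)=\F_2[x]$ and $x^k$ is never negligible (take $\Gamma_\R\to\Z/2$), so both ideals are zero. If $m$ is even, $H^*(W,\F_2)$ is a quotient of $\F_2[x,y,w]$ with $xy=0$ when $m\equiv 2\bmod 4$, or similar, with $w$ in degree $2$. I would compute $I(W)$ directly using the Gysin-type long exact sequence for the index-two cyclic subgroup, together with the dihedral generators the paper identifies. Then I would compute $N(W)$ by restricting to the two Klein four-subgroups and the center via the Quillen--Venkov analog, and check that the two ideals agree. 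The expected pattern is that $I(W)$ is generated by $w(w+xy)$-type classes, or by $w$ times a linear form, and that the radical adds nothing new because restriction to the Klein four-groups detects cohomology modulo nilpotents.

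\textbf{Types $H_3$ and $H_4$.} We have $H_3\cong \Z/2\times A_5$, and $H_4$ is a central extension of $(2.A_5\times 2.A_5).2$, i.e.\ $(SL_2(5)\circ SL_2(5))\rtimes \Z/2$. For $H_3$, $H^*(A_5,\F_2)\cong H^*(A_4,\F_2)$ is detected on a Klein four-group, so $H^*(H_3)$ is detected on elementary abelian subgroups of rank $\le 3$. Using Künneth for $\Z/2\times A_5$, the negligible and eventually negligible ideals can both be described in terms of restrictions to $(\Z/2)^3$. I would verify radicality by showing that $H^*(W)/I(W)$ embeds into $\prod_E H^*(E)/I(E)$, and that each of these quotients is reduced. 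For $H_4$ I would do the same using a Sylow $2$-subgroup and its known cohomology, possibly assisted by computer algebra.

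\textbf{Main obstacle.} The hard step is proving that $I(W)$ itself is cut out by restrictions to elementary abelian subgroups. Quillen--Venkov only controls $N$, not $I$. If one only had $\prod_E H^*(E)/I(E)$ reduced, radicality would follow once $\ker\bigl(H^*(W)\to\prod_E H^*(E)/I(E)\bigr)\subseteq I(W)$. So one needs to show that classes vanishing on every $E$ modulo negligibles are themselves negligible. I would attack this by producing enough explicit negligible classes, namely cup products with $(-1)$-type classes, corestrictions, and Gysin images, to span that kernel degree by degree. This is the step I expect to require the detailed, case-specific cohomology computations, especially for $H_4$.
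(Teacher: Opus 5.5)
Your proposal has a genuine gap, and it sits at the step you call the main obstacle, which is in fact the entire content of the theorem. You correctly reduce radicality to $\ker\bigl(H^*(W,\F_2)\to\prod_E H^*(E,\F_2)/I(E,\F_2)\bigr)\subseteq I(W,\F_2)$, but then defer it to unspecified case-by-case constructions. The missing idea is the odd-index transfer lemma. Corestriction preserves negligibility: for a faithful representation $V$ of $G$, inflation to $F(V)^H$, resp.\ $F(V)^G$, commutes with corestriction and has kernel exactly the negligible classes. If $[G:H]$ is odd, $\mathrm{cor}\circ\res$ is multiplication by a unit, so $\res$ is injective and $I(G,\F_2)=H^*(G,\F_2)\cap I(H,\F_2)$. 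Radicality therefore descends from $H$ to $G$: if $x^m\in I(G,\F_2)$, then $\res(x)^m\in I(H,\F_2)$, so $\res(x)\in I(H,\F_2)$, so $x\in I(G,\F_2)$. This settles $H_3\cong A_5\times\Z/2$ in one line, because $V\times\Z/2\cong(\Z/2)^3$ is a Sylow subgroup of index $15$. Your claim that both ideals are ``described in terms of restrictions to $(\Z/2)^3$'' is true, but only because of this lemma, and no K\"unneth computation is needed. The lemma also settles $H_4$ via its subgroup $W(D_4)$ of index $14400/192=75$, which is a Weyl group. Your alternative for $H_4$ (the Sylow subgroup of order $64$, ``possibly assisted by computer algebra'') would not close: restriction computations only ever determine $N$, never $I$, and radicality of $I$ for that $2$-group is not known. (Minor: for $m\equiv 2\bmod 4$ one has $D_{2m}\cong D_m\times\Z/2$, and $H^*$ is polynomial on two degree-one classes; a relation of the form $xy=0$ belongs to $m\equiv 0\bmod 4$.)

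The same lemma reduces $I_2(m)$ to its Sylow subgroup, so the only case needing real work is $D_{2^k}$ with $k\geq 4$. There the tool you propose cannot work. For the cyclic subgroup $C=\langle r\rangle$, restriction hits both generators of $H^*(C,\F_2)=\F_2[x,y]/\langle x^2\rangle$: a degree-one class nonzero on $r$ maps to $x$, and $w$, which restricts nontrivially to the centre, maps to $y$. So restriction is surjective and $\mathrm{cor}$ is identically zero, since $\mathrm{cor}(\res(z))=2z=0$; for example $\mathrm{cor}(xy)=\mathrm{cor}(\res(uw))=0$. That Gysin sequence therefore certifies no class of $D_{2^k}$ as negligible. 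Moreover, $I(\Z/2^{k-1},\F_2)$ is not independently available; the paper derives it from the dihedral case.

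What is needed is first $N(D_{2^k},\F_2)=\langle uw,vw\rangle$, which does follow from the Quillen--Venkov analog on the two maximal Klein four-subgroups as you suggest. Then comes an induction on $k$ from $D_8=W(B_2)$ using the index-two \emph{dihedral} subgroups. Write $\lambda,\mu\in H^1(D_{2^k},\F_2)$ for the classes with $\lambda\mu=0$. The Gysin sequence for $\ker(\lambda)\cong D_{2^{k-1}}$ gives $\ker\bigl(\cup\lambda\colon H^3\to H^4\bigr)=\mathrm{span}\{\mu w,\mu^3\}=\mathrm{im}(\mathrm{cor}^3)$. The corestrictions of the negligible degree-three classes of $D_{2^{k-1}}$ (by induction, the span of $\lambda'w',\mu'w'$) are not all zero. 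Since $\mu^3$ and $\mu^3+\mu w$ are not eventually negligible, $\mu w$ must be negligible, and $\lambda w$ follows symmetrically. Your heuristic that Klein four-subgroups detect cohomology modulo nilpotents recovers only $N$ and says nothing about $I$.
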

In the setting of negligible cohomology, this is precisely saying that the eventually negligible classes are actually negligible. The result was previously known for Weyl groups.

\begin{theorem}[cf. Theorem 7 of \cite{Ducoat}]\label{Weylneg}
    All Weyl groups have radical mod-$2$ negligible ideal over $\Q$.
\end{theorem}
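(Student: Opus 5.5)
The plan is to reduce to elementary abelian $2$-subgroups generated by commuting reflections, using Serre's splitting principle for invariants of Weyl groups, and then to check radicality directly for those elementary abelian groups using Serre's explicit description of their negligible classes.

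\textbf{Step 1 (detection).} Let $W$ be a Weyl group, realized as a reflection group in $\mathrm{GL}_n(\Q)$. Serre's splitting principle \cite[\S25]{SerreBook} says that a cohomological invariant of $W$ over $\Q$ with values in $\F_2$ is zero once its restrictions vanish on every subgroup $E\le W$ generated by pairwise commuting reflections. Here the rationality of the reflection representation is essential. Since $\ker(\mathrm{inv})=I(W,\F_2)$ and $\mathrm{inv}$ commutes with restriction, I get:
\[
u\in I(W,\F_2)\iff \res^W_E(u)\in I(E,\F_2)\ \text{for all such } E.
\]
Each such $E\cong(\Z/2)^r$ is an elementary abelian $2$-group.

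\textbf{Step 2 (reduction).} Take $u\in N(W,\F_2)$, so $u^k\in I(W,\F_2)$ for some $k$. Negligibility is preserved by restriction, since composing $j:\Gamma_L\to E$ with the inclusion $E\le W$ gives a homomorphism to $W$. Hence
\[
\res_E(u)^k=\res_E(u^k)\in I(E,\F_2),
\]
that is, $\res_E(u)\in N(E,\F_2)$. If $I(E,\F_2)$ is radical, then $\res_E(u)\in I(E,\F_2)$ for every $E$, and Step 1 gives $u\in I(W,\F_2)$. So the theorem reduces to radicality for $E=(\Z/2)^r$ over $\Q$.

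\textbf{Step 3 (elementary abelian case).} Write $H^*(E,\F_2)=\F_2[x_1,\dots,x_r]$. By \cite[Lemma 26.4]{SerreBook}, $I(E,\F_2)$ is generated by the classes $x_ix_j(x_i+x_j)$, or equivalently by the relations forced by the identity $(a)\cup(a)=(a)\cup(-1)$ in Galois cohomology. The quotient $H^*(E,\F_2)/I(E,\F_2)$ is then identified with the image of $\mathrm{inv}$. That image is the free $H^*(\Q,\F_2)$-module on the elementary symmetric-type invariants evaluated at generic torsors, which lets me compute the quotient explicitly.

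To prove the quotient is reduced, I first suppose $u^k$ is negligible. I then evaluate at the generic torsor over $\Q(t_1,\dots,t_r)$ and at its specializations over $\mathbb R$, in the spirit of the result of \cite{GhermanMerkurjevJPAA} that the quotient has Krull dimension $1$ with $(-1)$ the only non-nilpotent direction. The goal is to show $u$ itself vanishes, decomposing $u$ in the monomial basis $x_{i_1}\cdots x_{i_s}$ with $i_1<\dots<i_s$, possibly multiplied by powers of $x_i$ that reduce to powers of $(-1)$. Concretely, modulo $I(E,\F_2)$ every monomial equals a square-free monomial times a power of a common degree-one element behaving like $(-1)$. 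In the quotient, then, $u^k$ is a polynomial in square-free monomials times powers of that element. Finally I compare leading terms in that element, using that $(-1)^m\ne0$ in $H^m(\mathbb R,\F_2)$.

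\textbf{Main obstacle.} I expect Step 3 to be the hardest, because $H^*(L,\F_2)$ itself has nilpotents for general $L$. For example, $(a)(b)$ with $a,b$ not sums of squares behaves subtly. So the reducedness of $H^*(E,\F_2)/I(E,\F_2)$ must come from the explicit presentation, not from an embedding into Galois cohomology rings. I would first try to show that this quotient embeds into a finite product of copies of $\F_2[y]$, indexed by homomorphisms $E\to\Z/2$ realized over $\mathbb R$. Such a product is reduced.
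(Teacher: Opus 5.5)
Your proof is correct. The paper gives no argument for this statement beyond the citation to Ducoat, and your Steps~1--2 are the standard route. Because the reflection representation of a Weyl group is defined over $\Q$, Serre's splitting principle applies over $\Q$. It says that genuine negligibility is detected on cubes; this is the analogue, for $I$, of what the paper's Corollary~\ref{quillen-venkov} proves for $N$ in any finite group. Radicality is then inherited from the cubes. This is exactly what fails for $I_2(m)$, $H_3$, $H_4$, whose reflection representations are defined only over $\Q(\cos(2\pi/m))$ or $\Q(\sqrt5)$. That is why the paper treats those groups instead via odd-index subgroups and Gysin sequences.

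Step~3, however, is much harder than you make it, and several claims in it are wrong as stated.

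\textbf{The one-line fix.} Radicality of $I(E,\F_2)$ follows directly from Proposition~\ref{elem ab 2-group vanishing}; the paper records this right after Theorem~\ref{detect on elem ab}. If $p$ is homogeneous and $p^k\in I(E,\F_2)$, then $p(\epsilon)^k=0$, hence $p(\epsilon)=0$, for every $\epsilon\in\F_2^r$, so $p\in I(E,\F_2)$. Homogeneous $p$ suffice because the radical of a graded ideal is graded.

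\textbf{Your proposed embedding is the same fact.} Pull back along the homomorphisms $\Gamma_{\mathbb R}\to E$. These are indexed by points $\epsilon\in\F_2^r$, i.e.\ elements of $E$, not by homomorphisms $E\to\Z/2$. The pullback sends $p$ to $p(\epsilon)y^{d}$ in $H^*(\mathbb R,\F_2)=\F_2[y]$. Injectivity modulo $I(E,\F_2)$ is precisely the nontrivial direction of Serre's lemma, so your ``main obstacle'' does not arise.

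\textbf{Claims that are false as stated.}

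For $r\ge 2$ there is no degree-one class $e\in H^1(E,\F_2)$ with $x_i^2\equiv x_ie$ modulo $I(E,\F_2)$ for all $i$. Such an $e$ would have to equal $1$ at every nonzero point of $\F_2^r$, and no linear form does. The class $(-1)$ lies in $H^1(\Q,\F_2)$, i.e.\ among the constant invariants, not in the image of $H^*(E,\F_2)$.

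Likewise the image of $\mathrm{inv}$ is not an $H^*(\Q,\F_2)$-module. Writing $(a_1,\dots,a_r)$ for the square classes defining the torsor, the image is the $\F_2$-span of the invariants $(-1)^{d-|S|}\prod_{i\in S}(a_i)$. It does not contain, for example, $(2)\cdot(a_1)$. What is free over $H^*(\Q,\F_2)$ is $\mathrm{Inv}(E,\F_2)$ itself, with basis the products $\prod_{i\in S}(a_i)$, not symmetric-type invariants.

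Replace Step~3 by the one-line argument above and the proof is complete.
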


Note that the class of Coxeter groups subsumes the class of Weyl groups. By the classification of finite irreducible Coxeter groups, the only irreducible Coxeter groups that are not Weyl groups are the family $I_2(m)\cong D_{2m}$ (the dihedral groups of order $2m$ for $m\neq 2,3,4,6$) and the exceptional cases $H_3$ and $H_4$.
As an application of our main theorem, we prove the result for all the remaining non-Weyl classes.

\subsection{Outline}

 In Section~\ref{Quillen-Venkov section}, we prove our main result, an analog of Quillen-Venkov for eventually negligible classes.
We show that a class of $H^{*}(G,\mathbb F_2)$ is eventually negligible if and only if its restriction to the cohomology of each elementary abelian 2-subgroup is negligible.
We also discuss the Gysin long exact sequence technique used later.
Section~\ref{computation-sec3} applies this machinery.
Section~\ref{dihedral} computes the eventually negligible ideal for dihedral groups and proves that the negligible ideal is radical.
Section~\ref{computation-cyclic-2-groups} deduces the cyclic $2$-group case as a corollary.
Section~\ref{compputation-finite-abelian-2} determines the eventually negligible ideal for finite abelian $2$-groups.
Section~\ref{computation-generalized-quaternions} computes the negligible ideal for generalized quaternion groups and proves radicality.
Section~\ref{conjecture-sec-coxeter} completes the proof that every finite irreducible Coxeter group has radical mod-$2$ negligible ideal. The family $I_2(m)$ is covered by the dihedral computations, while the exceptional groups $H_3$ and $H_4$ are treated by restriction to odd-index subgroups with radical negligible ideal.
Appendix~\ref{appendix} collects the Gysin diagrams

\section{Analog of Quillen--Venkov (main theorem)}\label{Quillen-Venkov section}

First, we recall some results about elementary abelian 2-groups. The first proposition gives their cohomology ring as a polynomial ring.

\begin{prop}[cf. Ch.~III, \S10; Ex.~V.2.2 of \cite{Brown}]
Let $E$ be an elementary abelian 2-group of rank $r$.
Then
\begin{equation*}
    H^*(E,\mathbb F_2)\simeq\mathbb F_2[x_1,\dots,x_r]
\end{equation*}
for $\deg(x_i)=1$ for $1\leq i\leq r$.
\end{prop}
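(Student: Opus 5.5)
The plan is to prove this by induction on the rank $r$. The engine is the Künneth formula applied to the splitting $E\cong(\Z/2)^{r-1}\times\Z/2$, and the base case is the cohomology of the cyclic group $\Z/2$.

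\textbf{Base case $r=1$.} Take $\Z/2$ acting freely on $S^\infty$ by the antipodal map. The quotient is $\mathbb{RP}^\infty$, which is a $K(\Z/2,1)$. Its mod-$2$ cellular chain complex has exactly one cell in each degree, and the boundary maps are multiplication by $0$ or $2$, so they vanish mod $2$. Hence $H^d(\Z/2,\F_2)\cong\F_2$ for every $d\ge0$. To get the ring structure we have two routes. One is Poincaré duality on the truncations $\mathbb{RP}^n$, which shows that the cup product of the generators is nonzero, so the nonzero class $x\in H^1$ satisfies $x^d\neq0$ for all $d$. The other is to use the periodic free resolution $\cdots\xrightarrow{1+t}\F_2[\Z/2]\xrightarrow{1+t}\F_2[\Z/2]\to\F_2$ and compute cup products through a diagonal approximation. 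Either way, $H^*(\Z/2,\F_2)=\F_2[x]$ with $\deg x=1$.

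\textbf{Inductive step.} Write $E=E'\times C$ with $E'$ of rank $r-1$ and $C\cong\Z/2$. Since $\F_2$ is a field, the Künneth theorem for group cohomology gives an isomorphism of graded rings
\begin{equation*}
H^*(E',\F_2)\otimes_{\F_2}H^*(C,\F_2)\xrightarrow{\ \sim\ }H^*(E,\F_2),\qquad a\otimes b\mapsto p_1^*(a)\smile p_2^*(b).
\end{equation*}
This uses two facts: the tensor product of free resolutions over $\F_2[E']$ and $\F_2[C]$ is a free resolution over $\F_2[E]$, and each group ring is finite dimensional in each degree, so $\mathrm{Hom}$ commutes with the tensor product. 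The tensor product of graded-commutative algebras carries the sign $(-1)^{\deg b\deg a'}$, which is trivial in characteristic $2$. The tensor product is therefore just the commutative tensor product, and
\begin{equation*}
\F_2[x_1,\dots,x_{r-1}]\otimes\F_2[x_r]\cong\F_2[x_1,\dots,x_r].
\end{equation*}

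\textbf{Naturality of the generators.} The $x_i$ can be taken to be the pullbacks along the coordinate projections $E\to\Z/2$ of the generator of $H^1(\Z/2,\F_2)$. Equivalently, $x_1,\dots,x_r$ is the basis of $H^1(E,\F_2)=\mathrm{Hom}(E,\F_2)$ dual to a chosen basis of $E$.

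\textbf{Main obstacle.} The step that needs the most care is verifying that the Künneth map is multiplicative, not merely an isomorphism of graded vector spaces. I would handle this through the compatibility of the cross product with the cup product, namely $(a\times b)\smile(a'\times b')=\pm(a\smile a')\times(b\smile b')$. This identity is standard, as in \cite{Brown}, and the sign disappears mod $2$.
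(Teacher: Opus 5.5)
Your proposal is correct and is essentially the standard argument the paper relies on through its citation of Brown: compute $H^*(\Z/2,\F_2)=\F_2[x]$ from the periodic resolution (equivalently from $\mathbb{RP}^\infty$), then induct on the rank using the K\"unneth isomorphism, which is multiplicative because the cross product is compatible with cup products and every sign vanishes mod $2$. One small wording fix: what makes $\mathrm{Hom}$ commute with $\otimes$ is that the free resolutions can be chosen finitely generated in each degree, not that the group rings are ``finite dimensional in each degree.''
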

The next two results exhaustively classify the negligible elements for an elementary abelian 2-group.
\begin{theorem}[cf. Lemma 26.4 of \cite{SerreBook}]
\label{negideal}
    Let $E$ be an elementary abelian 2-group of rank $n$.
    The negligible ideal $I(E,\mathbb F_2)$ over $\Q$ is generated by elements of the form
    \begin{equation*}
        \{x_ix_j^2+x_i^2x_j:1\leq i\leq j\leq n\}.
    \end{equation*}
\end{theorem}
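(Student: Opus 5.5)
The plan is to prove two inclusions. Write $J\subseteq \mathbb F_2[x_1,\dots,x_n]$ for the ideal generated by the elements $x_ix_j^2+x_i^2x_j$ with $i\le j$. For $i=j$ the generator is $2x_i^3=0$, so effectively $J$ is generated by $x_ix_j(x_i+x_j)$ for $i<j$. We must show $J=I(E,\mathbb F_2)$.

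\emph{Step 1: $J\subseteq I(E,\mathbb F_2)$.} A homomorphism $j:\Gamma_L\to E$ is given by $n$ classes $a_1,\dots,a_n\in H^1(L,\mathbb F_2)=L^\times/L^{\times2}$, and $j^*(x_i)=(a_i)$. It therefore suffices to show $(a)(b)(a+b)$-type expressions vanish. Here $x_i+x_j$ pulls back to $(a_ib_j)$, writing $a=a_i$, $b=a_j$, so the image of the generator is $(a)\cup(b)\cup(ab)$. Since $(ab)=(a)+(b)$, this equals $(a)(b)(a)+(a)(b)(b)$. Then $(a)(a)=(a)(-1)$ and $(b)(b)=(b)(-1)$ by the Steinberg relation. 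So the image is $(a)(b)(-1)+(a)(b)(-1)=0$ in $H^3(L,\mathbb F_2)$. Since $I$ is an ideal, all of $J$ is negligible.

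\emph{Step 2: $I(E,\mathbb F_2)\subseteq J$.} We compute the quotient $R=\mathbb F_2[x_1,\dots,x_n]/J$ explicitly. In $R$ we have $x_i^2x_j=x_ix_j^2$ for all $i,j$. From this one shows that every monomial of degree $d\ge 2$ involving a given set $S$ of variables is equal to any other monomial of degree $d$ with support $S$. This uses repeated exchange of exponents, keeping each exponent at least $1$. Hence $R^d$ is spanned by one monomial $m_S^{(d)}$ for each nonempty $S$ with $|S|\le d$.

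It then suffices to exhibit, for a suitable extension $L/\mathbb Q$ and classes $a_i$, values separating these spanning monomials. Concretely, we would show that the images of the $m_S^{(d)}$ under the evaluation maps are linearly independent. The image of $m_S^{(d)}$ is $(a_{s_1})\cdots(a_{s_k})(-1)^{d-k}$. Take $L=\mathbb Q(t_1,\dots,t_n)$ and $a_i=t_i$. Milnor's conjecture (Voevodsky), or simply residue maps in iterated Laurent-series fields, gives that the symbols $(t_S)(-1)^{d-|S|}$ are linearly independent in $H^d(L,\mathbb F_2)$. Indeed, taking residues along $t_{s}$ for $s\in S$ reduces to the class $(-1)^{d-|S|}\in H^{d-|S|}(\mathbb Q,\mathbb F_2)$, which is nonzero since $\mathbb Q\subset\mathbb R$ and $(-1)^m\ne0$ over $\mathbb R$. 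Residues along the other variables kill the remaining symbols, distinguishing the subsets. Consequently any $u$ with $u\notin J$ has nonzero image, which gives the claim.

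\emph{Main obstacle.} The delicate step is the normal-form computation of $R$, namely that the degree-$d$ monomials with fixed support all coincide modulo $J$. One must check that the exchange moves connect all exponent vectors with the given support. A move shifts one unit of exponent from $x_i$ to $x_j$ whenever both are present and the donor's exponent is at least $2$; it comes from multiplying the relation $x_ix_j(x_i+x_j)$ by a monomial. I would argue by induction on the sum of exponents exceeding $1$, moving all excess onto the smallest index. The residue-independence argument is standard and I expect it to be routine.
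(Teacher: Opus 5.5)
Your proof is correct, and it does more than the paper, which gives no argument for this statement. The paper simply quotes Serre's Lemma 26.4, together with the companion Proposition that a class is negligible iff it vanishes at every point of $\mathbb F_2^n$. Your Step 1, via $(a)\cup(a)=(a)\cup(-1)$, is sound. Your normal form for $\mathbb F_2[x_1,\dots,x_n]/J$, with one canonical monomial per support $S$ where $|S|\le d$, is the same canonical-monomial reduction the paper itself uses later for finite abelian $2$-groups.

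One sentence in your detection step needs repair. After the iterated residue $\partial_S$ along the $t_s$, $s\in S$, the symbols with $S'\not\supseteq S$ vanish. Those with $S'\supsetneq S$ survive as $(t_{S'\setminus S})(-1)^{d-|S'|}$, and residues along the other variables do not remove them. The fix is to choose $S$ maximal under inclusion among supports with $c_S\neq 0$. Then $\partial_S$ of the image is exactly $(-1)^{d-|S|}\in H^{d-|S|}(\mathbb Q(t_i : i\notin S),\mathbb F_2)$, which is nonzero because that field is formally real. Milnor's conjecture is not needed.

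There is also a lighter alternative, matching how the paper actually uses this result (evaluation at points of $\mathbb F_2^n$, as in its finite abelian argument with the vectors $\mathbf 1_S$). For $\epsilon\in\mathbb F_2^n$, let $j_\epsilon:\Gamma_{\mathbb R}\cong\mathbb Z/2\to E$ send complex conjugation to $\epsilon$. Then $j_\epsilon^*(x_i)=\epsilon_i\,(-1)$, so $j_\epsilon^*(f)=f(\epsilon)\,(-1)^d$, and $(-1)^d\neq 0$ in $H^d(\mathbb R,\mathbb F_2)$. Hence a negligible $f$ of degree $d$ vanishes on all of $\mathbb F_2^n$. Write $f\equiv\sum_S c_S m_S^{(d)}$ modulo $J$ and evaluate at the indicator vectors $\mathbf 1_T$. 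This gives $\sum_{S\subseteq T}c_S=0$ for every $T$, so all $c_S=0$ by induction on $|T|$, and $f\in J$.

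This route avoids function fields and residues, and it proves the evaluation criterion at the same time. Your generic-torsor version buys something stronger: the single torsor $(t_1,\dots,t_n)$ over $\mathbb Q(t_1,\dots,t_n)$ already detects every non-negligible class.
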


\begin{prop}[cf. Lemma 26.4 of \cite{SerreBook}]\label{elem ab 2-group vanishing}
Let $E$ be an elementary abelian 2-group of rank $r$.
An element $p(x_1,\dots,x_r)\in H^d(E,\mathbb F_2)$ is negligible if and only if it vanishes whenever the coordinates are in $\mathbb{F}_2$.
\end{prop}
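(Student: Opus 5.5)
We must prove Proposition~\ref{elem ab 2-group vanishing}: for $E$ elementary abelian of rank $r$, a class $p(x_1,\dots,x_r)\in H^d(E,\mathbb F_2)$ is negligible over $\Q$ if and only if $p(a_1,\dots,a_r)=0$ for all $(a_1,\dots,a_r)\in\mathbb F_2^r$. The plan is to pass through Theorem~\ref{negideal}. Let $J\subseteq\mathbb F_2[x_1,\dots,x_r]$ be the ideal of polynomials vanishing on $\mathbb F_2^r$. Theorem~\ref{negideal} says $I(E,\mathbb F_2)$ is generated by the elements $x_ix_j^2+x_i^2x_j$ for $i\le j$, so it suffices to show that the ideal they generate is $J$ in positive degrees. (A degree-$0$ constant vanishing on $\mathbb F_2^r$ is $0$, which is negligible, so degree $0$ causes no trouble.)

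For the inclusion of $I(E,\mathbb F_2)$ into $J$: each generator vanishes at every point of $\mathbb F_2^r$. For $i=j$ the generator is $2x_i^3=0$. For $i<j$ it is $x_ix_j(x_i+x_j)$, and for $a,b\in\mathbb F_2$ we have $ab(a+b)=0$, since either some factor is $0$ or $a=b=1$ and $a+b=0$. Because $J$ is an ideal, every element of $I(E,\mathbb F_2)$ vanishes on $\mathbb F_2^r$.

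For the converse, take a homogeneous $p$ of degree $d\ge1$ with $p\in J$. Write $I'$ for the ideal generated by the $x_ix_j(x_i+x_j)$ with $i<j$. I would reduce $p$ modulo $I'$ to a normal form using homogeneous relations. For $i<j$ the relation $x_i^2x_j\equiv x_ix_j^2$ lets us move exponents between variables within a monomial. Iterating, any monomial whose support is $S=\{i_1<\dots<i_k\}$ and which has degree $d$ is congruent to the single monomial
\[
m_{S,d}=x_{i_1}\cdots x_{i_{k-1}}\,x_{i_k}^{\,d-k+1}.
\]
To see this, repeatedly use $x_i^ax_j^b\equiv x_i^{a-1}x_j^{b+1}$ for $a\ge2$, $b\ge1$, $i<j$. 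Each such step multiplies by $x_i^{a-2}x_j^{b-1}$ times the relation $x_i^2x_j+x_ix_j^2$, so it stays homogeneous and never changes the support. Hence modulo $I'$, $p\equiv\sum_{|S|\le d} c_S\, m_{S,d}$ with $c_S\in\mathbb F_2$.

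Evaluating on $\mathbb F_2^r$ commutes with this reduction, since $I'\subseteq J$. The value of $m_{S,d}$ at $a$ is $1$ exactly when $a_i=1$ for all $i\in S$, that is, when $S\subseteq\mathrm{supp}(a)$. So the reduced polynomial defines the function $a\mapsto\sum_{S\subseteq \mathrm{supp}(a)}c_S$. The $S=\emptyset$ term is absent, since $d\ge1$ forces every monomial to have nonempty support. If this function vanishes identically, Möbius inversion on the Boolean lattice gives $c_S=0$ for all $S$. Explicitly, induct on $|S|$ by evaluating at the indicator vector of $S$. Thus $p\in I'\subseteq I(E,\mathbb F_2)$, which proves the converse.

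The main obstacle is the normal-form step. It needs care that every reduction is a genuine multiple of a generator and preserves both the degree and the support, and that the map $S\mapsto m_{S,d}$ accounts for all supports of size at most $d$. Granting Theorem~\ref{negideal}, the rest is bookkeeping.
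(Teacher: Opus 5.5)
Your argument is correct, but it takes a different route from the paper. The paper gives no proof of this statement; it simply imports the criterion from Serre's Lemma~26.4, the same source it cites for Theorem~\ref{negideal}.

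You take Theorem~\ref{negideal} as input and show, by commutative algebra over $\mathbb F_2$, that degree by degree the ideal $I'$ generated by the $x_ix_j(x_i+x_j)$ consists exactly of the homogeneous polynomials vanishing on $\mathbb F_2^r$. Each reduction step is the genuine multiple $x_i^{a-2}x_j^{b-1}(x_i^2x_j+x_ix_j^2)$. Always taking $j=i_k$ guarantees termination at $m_{S,d}$. Evaluation at indicator vectors, with induction on $|S|$, then kills the $c_S$.

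One precision: $J=(x_1^2+x_1,\dots,x_r^2+x_r)$ is not a homogeneous ideal. The claim to state is therefore $I'\cap H^d(E,\mathbb F_2)=J\cap H^d(E,\mathbb F_2)$ for $d\ge1$, which is what you actually prove. This is the same canonical-monomial and evaluation-at-$\mathbf 1_S$ technique the paper itself uses for finite abelian $2$-groups in Section~\ref{compputation-finite-abelian-2}.

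What your route buys is that the two facts the paper cites separately are shown to be equivalent by elementary algebra, so only one Galois-cohomological input is needed. What it does not buy is independence from Serre, since all the arithmetic content sits inside Theorem~\ref{negideal}.

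A self-contained proof would work on the Galois side instead. Pulling back along $\Gamma_L\to E$ sends $x_i\mapsto(a_i)$. The relation $(a)\cup(a)=(a)\cup(-1)$ then sends $p$ to $\sum_S c_S(-1)^{d-|S|}\prod_{i\in S}(a_i)$, where $c_S$ is the sum of the coefficients of the monomials of support $S$. These classes are independent for $a_i=t_i$ over $\mathbb Q(t_1,\dots,t_r)$, because $(-1)^m\neq0$ over a formally real field. That argument yields the vanishing criterion and Theorem~\ref{negideal} simultaneously.
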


We now move to the statement of our main result. In \cite{quillenvenkov}, Quillen and Venkov proved that nilpotent elements in mod-$p$ cohomology rings of $p$-groups can be detected on the cohomology of elementary abelian $p$-subgroups.

\begin{theorem}[Quillen-Venkov \cite{quillenvenkov}]\label{thm:quillen-venkov-nilpotence}
If $u\in H^*(G,\mathbb{F}_p)$ restricts to zero on every elementary abelian $p$-subgroup of $G$, then $u$ is nilpotent.
\end{theorem}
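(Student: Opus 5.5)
This is the classical Quillen--Venkov nilpotence theorem, so the plan follows the original argument. The first step is to reduce to the case where $G$ is a $p$-group. Let $P$ be a Sylow $p$-subgroup of $G$. Since $[G:P]$ is prime to $p$, the composite $\mathrm{cor}_P^G\circ\mathrm{res}_P^G$ is multiplication by $[G:P]$, which is invertible in $\mathbb F_p$. Hence $\mathrm{res}_P^G$ is injective on $H^*(G,\mathbb F_p)$. If $u$ restricts to zero on every elementary abelian subgroup of $G$, then so does $\mathrm{res}_P^G(u)$ on those of $P$. So if $\mathrm{res}_P^G(u)^k=0$, injectivity gives $u^k=0$.

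The main step is induction on $|P|$ for a $p$-group $P$. If $P$ is elementary abelian, the hypothesis with $E=P$ gives $u=0$. Otherwise, I would use Serre's theorem: for a $p$-group that is not elementary abelian, there are nonzero classes $\beta_1,\dots,\beta_m\in H^2(P,\mathbb F_p)$ with
\[
\beta_1\cdots\beta_m=0 .
\]
Each $\beta_i$ is the Bockstein of a nonzero homomorphism $P\to\mathbb Z/p$, whose kernel is a maximal subgroup $H_i$. These are the classes of degree-one elements when $p=2$, in which case each $\beta_i$ is the square of such a class.

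Every proper subgroup $H_i$ is smaller than $P$. Its elementary abelian subgroups are elementary abelian subgroups of $P$, so $\mathrm{res}_{H_i}(u)$ satisfies the hypothesis. By induction, some power $\mathrm{res}_{H_i}(u)^{k_i}$ vanishes, and we may take a common exponent $k$ for all $i$.

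It remains to lift this local vanishing to a global statement. I would use the Gysin (or Hochschild--Serre) long exact sequence attached to the extension $1\to H_i\to P\to \mathbb Z/p\to1$. This shows that if $\mathrm{res}_{H_i}(v)=0$, then $v^{p}$, or at worst some bounded power of $v$, is divisible by $\beta_i$. For $p=2$ the Gysin sequence gives this directly: $\ker(\mathrm{res}_{H_i})$ is the ideal generated by the degree-one class, so $v$ itself is divisible by it. For odd $p$, the kernel of restriction to $H_i$ is controlled by $x_i$ and $\beta_i$, and since $x_i^2=0$, a suitable power lies in $\beta_i H^*(P)$.

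Applying this to $v=u^{k}$ for each $i$ in turn, we get
\[
u^{kcm}\in \beta_1\cdots\beta_m\, H^*(P,\mathbb F_p)=0
\]
for a bounded constant $c$. Hence $u$ is nilpotent.

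The expected obstacle is Serre's product theorem itself, which requires its own proof via Steenrod operations. I would cite it rather than reprove it. The Gysin divisibility step for odd $p$ also needs care.
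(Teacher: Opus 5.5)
\emph{Verdict: correct for $p=2$; genuine gap for odd $p$.}

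For $p=2$ your argument is complete. The Gysin sequence gives $u^k=x_iw_i$ for each $i$, so $u^{2k}=\beta(x_i)w_i^2$, and Serre's theorem then gives $u^{2km}=0$. The paper gives no proof of this statement; it simply cites Quillen--Venkov and only applies it with $p=2$. Your $p=2$ argument is the same pattern the paper uses, in the Galois setting, in Lemma~\ref{index 2 subgroups} and the proof of Theorem~\ref{detect on elem ab}.

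\textbf{The gap.} The statement has $\mathbb F_p$ coefficients, so you need the divisibility step for odd $p$, and you leave it at ``controlled by $x_i$ and $\beta_i$.'' For odd $p$ there is no three-term Gysin sequence, because $\mathbb F_p[P/H_i]\cong\mathbb F_p[T]/(T^p)$ is uniserial of length $p$. Moreover, $\ker(\res_{H_i})$ is in general not contained in the ideal $(x_i,\beta_i)$, so the reasoning ``$x_i^2=0$, hence a power lies in $\beta_iH^*(P)$'' does not apply.

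For example, let $P$ be extraspecial of order $p^3$ and exponent $p$, let $H=\ker(x)\cong(\mathbb Z/p)^2$, and let $x'$ complete $x$ to a basis of $H^1(P,\mathbb F_p)$. Then $H^2(P,\mathbb F_p)$ is $4$-dimensional, and the Lyndon--Hochschild--Serre spectral sequence collapses in total degree $2$. Hence $\ker(\res_H)\cap H^2$ is $2$-dimensional; it contains a class detected in $E_\infty^{1,1}=H^1(P/H,H^1(H,\mathbb F_p))\neq 0$. On the other hand, $(x,\beta(x))\cap H^2=\mathbb F_p\beta(x)$, because $x^2=0$ and $xx'=0$ (the latter is the inflated class of the central extension).

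\textbf{The missing ingredient.} What you need is the Quillen--Venkov lemma: if $H=\ker(x)$ has index $p$ and $v$ is homogeneous with $\res_H(v)=0$, then $v^2\in\beta(x)H^*(P,\mathbb F_p)$. To prove it, proceed as follows.
\begin{itemize}
\item Let $J$ be the augmentation ideal of $\mathbb F_p[P/H]\cong\mathrm{Ind}_H^P\mathbb F_p$. Let $\delta$ and $\delta'$ be the connecting maps of $0\to\mathbb F_p\to\mathbb F_p[P/H]\to J\to 0$ and $0\to J\to\mathbb F_p[P/H]\to\mathbb F_p\to 0$, respectively.
\item By Shapiro's lemma, $\ker(\res_H)=\operatorname{im}\delta$ and $\ker\bigl(H^*(P,J)\to H^*(H,\mathbb F_p)\bigr)=\operatorname{im}\delta'$.
\item The composite $\delta\circ\delta'$ is cup product with the spliced $2$-extension $0\to\mathbb F_p\to\mathbb F_p[P/H]\xrightarrow{t-1}\mathbb F_p[P/H]\to\mathbb F_p\to 0$, which is a unit multiple of $\beta(x)$.
\item Write $v=\delta(w)$. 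Then $v^2=\pm\delta(wv)$. Let $\bar w$ be the image of $w$ in $H^*(H,\mathbb F_p)$; then $wv$ maps to $\bar w\cdot\res_H(v)=0$. Hence $wv=\delta'(z)$ for some $z$, and $v^2=\pm\delta\delta'(z)\in\beta(x)H^*(P,\mathbb F_p)$.
\end{itemize}

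With this lemma your last step goes through with $c=2$. First reduce to homogeneous $u$ of even degree, since odd-degree classes square to zero when $p$ is odd. Then use that the $\beta_i$ have even degree and are therefore central.
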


By an analogous argument in Galois cohomology, we prove the following generalization of Quillen-Venkov at the end of the section.

\begin{theorem}[Quillen-Venkov for eventually negligible classes]\label{detect on elem ab}
Let $G$ be a finite group and $F$ a field.
The class $u\in H^*(G,\mathbb{F}_p)$ is eventually negligible over $F$ if and only if the restriction of $u$ to each elementary abelian $p$-subgroup is eventually negligible.
\end{theorem}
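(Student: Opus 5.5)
We prove the forward direction directly and the converse by transporting the Quillen--Venkov argument (Theorem~\ref{thm:quillen-venkov-nilpotence}) to Galois cohomology. We carry it out in detail for $p=2$, the case used in the rest of the paper; for odd $p$ we only indicate the necessary modifications.

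\emph{Forward direction.} If $E\le G$ and $j:\Gamma_L\to E$ is continuous, then $j^*(\mathrm{res}^G_E u^k)=(\iota\circ j)^*(u^k)$, where $\iota:E\hookrightarrow G$ is the inclusion. Hence if $u^k$ is negligible, so is $(\mathrm{res}^G_E u)^k$. Conversely, suppose that for each elementary abelian $E\le G$ there is $k_E$ with $(\mathrm{res}_E u)^{k_E}$ negligible. Since $G$ has finitely many subgroups, we may fix one $k$ that works for all $E$ simultaneously. We must find a uniform exponent $K$ with $j^*(u^K)=0$ for every $L/F$ and every $j:\Gamma_L\to G$.

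\emph{Reduction to $p$-groups.} Let $H=j(\Gamma_L)$, let $P$ be a Sylow $p$-subgroup of $H$, and let $L_P$ be the fixed field of $j^{-1}(P)$. Then $[L_P:L]$ is prime to $p$. Since $\mathrm{cor}\circ\mathrm{res}$ is multiplication by this degree, $H^*(L,\mathbb F_p)\to H^*(L_P,\mathbb F_p)$ is injective. The restricted map $j|:\Gamma_{L_P}\to P$ is surjective and computes $\mathrm{res}^G_P u$. So it suffices to prove the following claim by induction on $|P|$: for every $p$-subgroup $P\le G$ there is $K_P$ such that $j^*(\mathrm{res}_P u)^{K_P}=0$ for every \emph{surjective} $j:\Gamma_L\to P$. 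We then take $K$ to be the maximum of the $K_P$. If $P$ is elementary abelian, $K_P=k$ works. Otherwise, Serre's theorem (the key input in Quillen--Venkov) gives nonzero $x_1,\dots,x_n\in H^1(P,\mathbb F_p)$ with $\prod_i\beta(x_i)=0$; for $p=2$, $\beta(x_i)=x_i^2$. Each $P_i=\ker x_i$ is a proper subgroup. Let $L_i$ be the fixed field of $j^{-1}(P_i)$, a cyclic degree-$p$ extension of $L$, namely $L(\sqrt{a_i})$ with $(a_i)=j^*x_i$ when $p=2$. The induction hypothesis applied to the surjection $\Gamma_{L_i}\to P_i$ shows that $j^*(u^{K'})$ dies in $H^*(L_i,\mathbb F_p)$, where $K'=\max_i K_{P_i}$.

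\emph{Galois Gysin step.} For $p=2$ we use the exact sequence for a quadratic extension (Arason's sequence, valid in all degrees by the Milnor conjecture):
\[
\ker\bigl(H^m(L,\mathbb F_2)\to H^m(L(\sqrt a),\mathbb F_2)\bigr)=(a)\cup H^{m-1}(L,\mathbb F_2).
\]
This gives $j^*(u^{K'})=(a_i)\cup w_i$ for each $i$. Squaring uses Frobenius in the commutative ring $H^*(L,\mathbb F_2)$:
\[
j^*(u^{2K'})=(a_i)^2w_i^2=j^*(x_i^2)\,w_i^2 .
\]
Multiplying these identities over all $i$ yields
\[
j^*(u^{2nK'})=j^*\Bigl(\prod_i x_i^2\Bigr)\prod_i w_i^2=0 ,
\]
so $K_P=2nK'$ works, completing the induction. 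For odd $p$ we would first adjoin $\mu_p$, an extension of degree prime to $p$ and hence injective on cohomology. We would then replace Arason's sequence by the analogous exact sequence for cyclic degree-$p$ Kummer extensions coming from Bloch--Kato, and replace squaring by $p$-th powers, using $\beta$ of a Kummer class.

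\emph{Main obstacle.} The delicate point is the Gysin step. We must show that the Galois-theoretic kernel of restriction to the index-$p$ subfield is generated by the image of a class from $H^*(P)$, and that a power of that generator is the image of $\beta(x_i)$, so that Serre's group-theoretic relation can be applied after pulling back. For $p=2$ this is exactly Arason/Milnor together with $(a)^2=j^*(x^2)$. For odd $p$ one must check carefully the relation between $(a)^p$, or the appropriate generator, and $j^*\beta(x)$ over a field containing $\mu_p$. That check is where I expect the real work to lie.
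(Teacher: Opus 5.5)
Your argument for $p=2$ is correct and is essentially the paper's. The inductive step is Lemma~\ref{index 2 subgroups}: the Gysin sequence for each $\ker x_i$, then squaring, then Serre's relation $\prod_i x_i^2=0$. The differences are only bookkeeping. The paper tests everything on one versal field $K=F(V)^G$, where eventual negligibility of $u$ is the same as nilpotence of its image in $H^*(K,\mathbb F_2)$; you instead track a uniform exponent over all $L$ and all surjective $j$. The paper also reduces to a Sylow subgroup via Lemma~\ref{intersection lemma}, where you use prime-to-$p$ field extensions. One correction: the exactness $\ker\bigl(H^m(L,\mathbb F_2)\to H^m(L(\sqrt a),\mathbb F_2)\bigr)=(a)\cup H^{m-1}(L,\mathbb F_2)$ needs no Milnor conjecture. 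It is Lemma~\ref{restriction les} applied to the profinite group $\Gamma_L$ and the character $j^*x_i$, and in that form it also covers characteristic $2$.

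The genuine gap is odd $p$, which the statement includes and which you leave unproved. Moreover, the mechanism you sketch would fail. For $p$ odd, every degree-one class satisfies $(a)\cup(a)=0$ by graded commutativity, so $(a)^p=0$. By contrast, $j^*\beta(x_i)=\beta\bigl(j^*x_i\bigr)$ need not vanish: it is the obstruction to embedding $L(\sqrt[p]{a})$ in a cyclic extension of degree $p^2$. So no power of the Kummer class recovers $j^*\beta(x_i)$, and Serre's relation cannot be transported the way you propose.

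The paper does not run the induction for odd $p$ at all. It cites \cite[Theorem 2.2]{GhermanMerkurjevJPAA}: for $p$ odd (or $F$ not formally real), every class of positive degree is eventually negligible, so the equivalence is immediate. Within your framework you could also close the case directly. First adjoin $\mu_p$; this has prime-to-$p$ degree, so it is injective on cohomology and the map stays surjective onto $P$. The norm residue theorem then gives $\ker\bigl(H^m(L,\mathbb F_p)\to H^m(L(\sqrt[p]{a}),\mathbb F_p)\bigr)=(a)\cup H^{m-1}(L,\mathbb F_p)$. Since $(a)\cup(a)=0$, graded commutativity gives $\bigl((a)\cup w\bigr)^2=0$. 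Thus a single index-$p$ subgroup suffices in the inductive step, with no Bockstein or Serre input; the odd case is easier, not harder.
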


Note that the interpretation of $I(E,\mathbb F_2)$ in Proposition \ref{elem ab 2-group vanishing} proves that $I(E,\mathbb F_2)$ is radical by the finite field analogue of Nullstellensatz.
Using this, we can equivalently state the conclusion of Theorem \ref{detect on elem ab} as the following.

\begin{cor}[Quillen-Venkov for mod-2 eventually negligible classes]\label{quillen-venkov}
Let $G$ be a finite group.
The class $u\in H^*(G,\mathbb F_2)$ is eventually negligible over $\Q$ if and only if the restriction of $u$ to each elementary abelian 2-subgroup is negligible.
\end{cor}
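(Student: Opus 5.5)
The corollary should follow from Theorem~\ref{detect on elem ab} once we know that, for an elementary abelian $2$-group $E$, a class is eventually negligible over $\Q$ exactly when it is negligible. That is, the plan is to show $I(E,\mathbb F_2)=N(E,\mathbb F_2)$ and then substitute this into the theorem. The inclusion $I(E,\mathbb F_2)\subseteq N(E,\mathbb F_2)$ is trivial, so only the reverse inclusion needs an argument.

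For the reverse inclusion, fix an elementary abelian $2$-subgroup $E\le G$ of rank $r$ and identify $H^*(E,\mathbb F_2)=\mathbb F_2[x_1,\dots,x_r]$. Let $p\in H^*(E,\mathbb F_2)$ and suppose $p^k\in I(E,\mathbb F_2)$ for some $k\ge1$. We may take $p$ homogeneous, since both ideals are graded: Theorem~\ref{negideal} gives homogeneous generators for $I$, and $N$ is the radical of a homogeneous ideal. By Proposition~\ref{elem ab 2-group vanishing}, $p^k$ vanishes at every point $a\in\mathbb F_2^r$. Because $\mathbb F_2$ is a field, $p(a)^k=0$ forces $p(a)=0$. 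So $p$ vanishes on all of $\mathbb F_2^r$, and the same proposition gives $p\in I(E,\mathbb F_2)$.

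The only subtle point is to read Proposition~\ref{elem ab 2-group vanishing} as a statement about polynomial functions on $\mathbb F_2^r$, so that it applies to $p^k$ in its degree $dk$. This is the finite-field Nullstellensatz remark made before the corollary. Alternatively, one can note that $I(E,\mathbb F_2)$ contains $x_i^2+x_i$ in the inhomogeneous sense. More precisely, the generators $x_ix_j^2+x_i^2x_j$ cut out exactly the $\mathbb F_2$-points, and the vanishing ideal of a set of points is always radical.

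With this in hand, both directions of the corollary follow. If $u\in H^*(G,\mathbb F_2)$ is eventually negligible over $\Q$, then Theorem~\ref{detect on elem ab} says each $\res^G_E u$ is eventually negligible, hence negligible by the previous step. Conversely, if each $\res^G_E u$ is negligible, it is in particular eventually negligible, and Theorem~\ref{detect on elem ab} with $p=2$ and $F=\Q$ shows that $u$ is eventually negligible. No step is a real obstacle; the only care needed is the grading bookkeeping in the reduction to homogeneous $p$.
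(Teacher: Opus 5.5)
Your proof is correct and follows the paper's route. The paper obtains the corollary from Theorem~\ref{detect on elem ab} by noting that Proposition~\ref{elem ab 2-group vanishing} makes $I(E,\mathbb F_2)$ radical (finite-field Nullstellensatz), which is exactly your evaluation argument $p(a)^k=0\Rightarrow p(a)=0$; your reduction to homogeneous $p$ just makes explicit the grading bookkeeping the paper leaves implicit.

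The ``alternatively'' aside should be dropped, though. $I(E,\mathbb F_2)$ is graded and does not contain $x_i^2+x_i$, since its degree-one part $x_i$ is not negligible. What is actually true, and all your main argument uses, is that $I(E,\mathbb F_2)$ is the largest homogeneous ideal contained in the vanishing ideal of $\mathbb F_2^r$.
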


In order to prove Theorem \ref{detect on elem ab}, we need the following results about restriction and corestriction.

\begin{lemma}[cf. Proposition 2.3.1 of \cite{GM}]\label{half conjecture}
Let $H\subset G$.
Then the ring map
\begin{equation*}
    \text{res}:H^*(G,\mathbb{F}_p)\to H^*(H,\mathbb{F}_p)
\end{equation*}
sends negligible elements to negligible elements.
\end{lemma}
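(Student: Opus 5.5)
The plan is to argue directly from the definition of negligibility, using functoriality of cohomology under composition of group homomorphisms. Let $u\in H^d(G,\mathbb F_p)$ be negligible over $F$, and write $v=\mathrm{res}^G_H(u)\in H^d(H,\mathbb F_p)$. We must show that for every field extension $L/F$ and every continuous homomorphism $j:\Gamma_L\to H$, the pullback $j^*(v)\in H^d(L,\mathbb F_p)$ vanishes. The key observation is that restriction is itself a pullback: $\mathrm{res}^G_H=\iota^*$, where $\iota:H\hookrightarrow G$ is the inclusion.

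The steps are as follows. First, fix $L/F$ and $j:\Gamma_L\to H$, and form the composite $i=\iota\circ j:\Gamma_L\to G$. This is again a continuous homomorphism, since $\Gamma_L$ carries the profinite topology and $G$, $H$ are discrete, so $\iota$ is trivially continuous. Second, by contravariant functoriality of group cohomology (inflation-type pullbacks compose, with trivial coefficients $\mathbb F_p$ preserved since the action is trivial throughout), we get
\[
j^*(v)=j^*\iota^*(u)=(\iota\circ j)^*(u)=i^*(u).
\]
Third, $i^*(u)=0$ because $u$ is negligible over $F$ and $i$ is one of the homomorphisms quantified over in the definition of $I_F$. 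Hence $j^*(v)=0$ for all such $L$ and $j$, so $v$ is negligible.

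It remains to address the ring-map assertion and the ideal structure. The map $\mathrm{res}$ is a ring homomorphism for the cup product; this is standard and follows from naturality of the cup product under pullbacks. It therefore sends $I_F(G,\mathbb F_p)$ into $I_F(H,\mathbb F_p)$ degreewise. As an immediate consequence, it also sends eventually negligible elements to eventually negligible ones, since $\mathrm{res}(u)^k=\mathrm{res}(u^k)$. This is the form needed for the ``only if'' direction of Theorem~\ref{detect on elem ab}.

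I expect no serious obstacle. The only point requiring care is checking that the pullback along a continuous map from a profinite group to a finite group is compatible with composition, including at the level of the continuous (Galois) cohomology $H^d(L,\mathbb F_p)=H^d(\Gamma_L,\mathbb F_p)$. This holds because both are computed with continuous cochains and pullback of cochains is strictly functorial.
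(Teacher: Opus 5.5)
Your argument is correct and is the standard one. Write $\mathrm{res}^G_H=\iota^*$; for any $j\colon\Gamma_L\to H$ you have $j^*\circ\iota^*=(\iota\circ j)^*$, and this vanishes on negligible classes because the paper's definition quantifies over all continuous homomorphisms $\Gamma_L\to G$, not only surjective ones. The paper itself gives no proof and simply cites Gherman--Merkurjev, and your composition argument is the usual one-line justification of that cited fact.
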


\begin{lemma}\label{lemma:corestriction}
Let $H\subset G$ be a subgroup of a finite group $G$.
Denote by
\begin{equation*}
    \text{cor}^d:H^d(H,\mathbb{F}_p)\to H^d(G,\mathbb{F}_p)
\end{equation*}
the degree $d$ corestriction (or transfer) map.
If $u\in H^d(H,\mathbb{F}_p)$ is negligible over $F$, then $\text{cor}^d(u)$ is negligible over $F$ in $H^d(G,\mathbb{F}_p)$.
\end{lemma}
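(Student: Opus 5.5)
We must prove that corestriction preserves negligibility. The plan is to compute $j^*\circ\mathrm{cor}$ for an arbitrary $j$ by a double coset formula in profinite group cohomology. Fix a field extension $L/F$ and a continuous homomorphism $j:\Gamma_L\to G$. We must show $j^*(\mathrm{cor}_H^G(u))=0$ in $H^d(L,\mathbb F_p)$.

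First reduce to the case where $j$ is surjective. The image $G_0=j(\Gamma_L)$ is a subgroup, and $j$ factors as $\Gamma_L\to G_0\hookrightarrow G$. Apply the Mackey double coset formula for the finite group $G$ with subgroups $H$ and $G_0$:
\begin{equation*}
\mathrm{res}^G_{G_0}\mathrm{cor}^G_H(u)=\sum_{g\in G_0\backslash G/H}\mathrm{cor}^{G_0}_{G_0\cap gHg^{-1}}\,\mathrm{res}^{gHg^{-1}}_{G_0\cap gHg^{-1}}(c_g^*u).
\end{equation*}
Conjugation $c_g^*u$ is still negligible, since precomposing any $j':\Gamma_{L'}\to gHg^{-1}$ with conjugation gives a homomorphism to $H$. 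Restriction preserves negligibility by Lemma~\ref{half conjecture}. So each summand has the form $\mathrm{cor}^{G_0}_{K}(v)$ with $v$ negligible on $K\le G_0$. It therefore suffices to treat a surjective $j:\Gamma_L\to G$, with $u$ negligible on $H\le G$.

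In the surjective case, let $U=j^{-1}(H)$. This is an open subgroup of $\Gamma_L$ of index $[G:H]$, so $U=\Gamma_{L'}$ for a finite separable extension $L'/L$; note $L'$ is still an extension of $F$. Since $j$ induces a bijection $\Gamma_L/U\cong G/H$, corestriction is compatible with the induced map on cohomology: $j^*\circ\mathrm{cor}^G_H=\mathrm{cor}^{\Gamma_L}_{U}\circ (j|_U)^*$. One way to check this is to view corestriction as induced by the norm map $\mathrm{Ind}\to$ coefficients, which commutes with inflation along $j$ because the coset sets match. Alternatively, use Shapiro's lemma, noting that $\mathrm{Ind}_H^G\mathbb F_p$ inflates to $\mathrm{Ind}_U^{\Gamma_L}\mathbb F_p$. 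Now $(j|_U)^*(u)\in H^d(L',\mathbb F_p)$ vanishes because $u$ is negligible and $j|_U:\Gamma_{L'}\to H$ is a homomorphism from the absolute Galois group of an extension of $F$. Hence $j^*\mathrm{cor}(u)=\mathrm{cor}_{L'/L}(0)=0$.

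The main obstacle is carefully justifying the commutation $j^*\mathrm{cor}=\mathrm{cor}\,(j|_U)^*$ for inflation-type maps of profinite groups. This holds exactly when $j$ induces a bijection on coset spaces, which is why we first reduce to the surjective case. I would verify it on the cochain level via the standard transfer formula with coset representatives, choosing the representatives of $\Gamma_L/U$ to be lifts of those of $G/H$.
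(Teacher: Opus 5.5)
Your proof is correct, but it takes a different route from the paper's. The paper never treats an arbitrary $j$ directly. It invokes \cite[Proposition 2.1]{GM}: a class is negligible over $F$ if and only if it is killed by inflation to the generic field $K=F(V)^G$, for $V$ a faithful representation, and likewise for $H$ with $K_H=F(V)^H$. The paper then writes down a commutative square $\mathrm{inf}\circ\mathrm{cor}=\mathrm{cor}\circ\mathrm{inf}_H$ and concludes immediately. That square is exactly your ``surjective case'' compatibility, applied once to the single versal homomorphism $\Gamma_K\to\mathrm{Gal}(F(V)/K)=G$: this map is surjective, and $\Gamma_{K_H}$ is precisely the preimage of $H$.

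You instead work straight from the definition. The Mackey double coset formula, together with conjugation invariance of negligibility and Lemma~\ref{half conjecture}, reduces an arbitrary $j$ to the surjection $\Gamma_L\to j(\Gamma_L)$. You then apply the same coset-compatibility of corestriction with inflation along $j$, using $U=j^{-1}(H)=\Gamma_{L'}$ with $L'\supseteq L\supseteq F$.

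The trade-off is this. The paper's argument is shorter, but it depends on the nontrivial versality input from \cite{GM}, namely that negligibility is detected by the generic torsor. Your argument uses only standard group-cohomological facts and is self-contained. It also makes explicit why the relevant square commutes, via the bijection $\Gamma_L/U\cong G/H$. The paper leaves that justification implicit even though its proof depends on it.
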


\begin{proof}
Let $K=F(V)^G$ for $V$ a finite-dimensional faithful $F$-representation of $G$.
Since $H$ is a subgroup of $G$, $V$ is likewise a faithful $F$-representation of $H$.
Define $K_H=F(V)^H$.
We build the following commutative square.
\begin{equation*}
\begin{tikzcd}
    H^d(H,\mathbb{F}_p)\arrow{r}{\text{cor}^d}\arrow{d}{\text{inf}_H} &H^d(G,\mathbb{F}_p)\arrow{d}{\text{inf}} \\
    H^d(K_H,\mathbb{F}_p)\arrow{r}{\text{cor}^d} &H^d(K,\mathbb{F}_p)
\end{tikzcd}
\end{equation*}
By \cite[Proposition 2.1]{GM}, the kernel of these inflation maps are the negligible classes of $H^d(H,\mathbb{F}_p)$ and $H^d(G,\mathbb{F}_p)$ respectively.
Then $\text{inf}_H(u)=0$ implies that $\text{inf}(\text{cor}^d(u))=0$.
We conclude $\text{cor}^d(u)$ is negligible in $H^d(G,\mathbb{F}_p)$.
\end{proof}

\begin{lemma}\label{intersection lemma}
Let $G$ be a finite group and $p$ be a prime integer.
Let $H\subset G$ be a subgroup for which $\gcd([G:H],p)=1$.
Then restriction is an injection of rings $H^*(G,\mathbb{F}_p)\to H^*(H,\mathbb{F}_p)$.
Further,
\begin{equation*}
    I(G,\mathbb{F}_p)=H^*(G,\mathbb{F}_p)\cap I(H,\mathbb{F}_p)
\end{equation*}
when $H^*(G,\mathbb{F}_p)$ is viewed as a subring of $H^*(H,\mathbb{F}_p)$.
\end{lemma}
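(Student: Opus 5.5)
The statement has two parts: injectivity of restriction, and the identification $I(G,\mathbb F_p)=H^*(G,\mathbb F_p)\cap I(H,\mathbb F_p)$. Both will follow from the standard identity
\[
\mathrm{cor}_H^G\circ \mathrm{res}_H^G = [G:H]\cdot \mathrm{id}\quad\text{on } H^*(G,\mathbb F_p),
\]
together with Lemmas~\ref{half conjecture} and~\ref{lemma:corestriction}.

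For injectivity, suppose $\mathrm{res}(u)=0$ for some $u\in H^d(G,\mathbb F_p)$. Then $[G:H]\,u=\mathrm{cor}(\mathrm{res}(u))=0$. Since $\gcd([G:H],p)=1$, the integer $[G:H]$ is invertible in $\mathbb F_p$, so $u=0$. Restriction is a ring homomorphism because it is compatible with cup products. Hence $H^*(G,\mathbb F_p)$ embeds as a subring of $H^*(H,\mathbb F_p)$, and all identifications below are made through this embedding.

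For the intersection, the inclusion $I(G,\mathbb F_p)\subseteq H^*(G,\mathbb F_p)\cap I(H,\mathbb F_p)$ is exactly Lemma~\ref{half conjecture}: restriction sends negligible classes to negligible classes. For the reverse inclusion, take $u\in H^d(G,\mathbb F_p)$ with $\mathrm{res}(u)$ negligible in $H^d(H,\mathbb F_p)$. By Lemma~\ref{lemma:corestriction}, $\mathrm{cor}(\mathrm{res}(u))=[G:H]\,u$ is negligible in $H^d(G,\mathbb F_p)$. Negligible classes form a subgroup, so multiplying by the inverse of $[G:H]$ modulo $p$ shows that $u$ itself is negligible.

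The only points to handle with care are the following. Lemma~\ref{lemma:corestriction} is proved over a general base field $F$, and I apply it with $F=\mathbb Q$ to match the notation $I$. The argument works degree by degree, which suffices because both ideals are graded. There is no real obstacle beyond invoking the $\mathrm{cor}\circ\mathrm{res}$ formula, which is standard (Brown, Ch.~III, Prop.~9.5).
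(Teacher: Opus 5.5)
Your proposal is correct and matches the paper's argument essentially step for step. You use $\mathrm{cor}\circ\mathrm{res}=[G:H]$ with $[G:H]$ invertible mod $p$ for injectivity, Lemma~\ref{half conjecture} for the forward inclusion, and Lemma~\ref{lemma:corestriction} applied to $\mathrm{res}(u)$ followed by division by $[G:H]$ for the reverse inclusion, working one homogeneous degree at a time.
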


\begin{proof}
The composition of the corestriction and restriction maps is multiplication by $[G:H]$ on each $H^d(G,\mathbb{F}_p)$.
Since $\text{gcd}([G:H],p)=1$, the composition is an isomorphism on each $H^d(G,\mathbb{F}_p)$.
We conclude that $\text{res}:H^*(G,\mathbb{F}_p)\to H^*(H,\mathbb{F}_p)$ is an injective ring map and each $\text{cor}^d:H^d(H,\mathbb{F}_p)\to H^d(G,\mathbb{F}_p)$ is a surjective group homomorphism.

View $H^*(G,\mathbb{F}_p)$ as a subring of $H^*(H,\mathbb{F}_p)$ via restriction.
If $u\in I(G,\mathbb{F}_p)$, then $\text{res}(u)\in I(H,\mathbb{F}_p)$ by \cite[Proposition 2.3(1)]{GM}.
Thus
\begin{equation*}
    I(G,\mathbb{F}_p)\subset H^*(G,\mathbb{F}_p)\cap I(H,\mathbb{F}_p).
\end{equation*}
If $v\in H^d(G,\mathbb{F}_p)\cap H^d(H,\mathbb{F}_p)_{\text{neg}}$, then $v=\text{res}(u)$ for some $u\in H^d(G,\mathbb{F}_p)$.
We have $\text{cor}^d(v)=\text{cor}^d(\text{res}^d(u))=[G:H]u$ is an element of $H^d(G,\mathbb{F}_p)_{\text{neg}}$ by Lemma \ref{lemma:corestriction}.
We conclude that $u$ is an element of $H^d(G,\mathbb{F}_p)_{\text{neg}}$ since $[G:H]$ is invertible in $H^d(G,\mathbb{F}_p)$.
For an inhomogeneous $w\in H^*(G,\mathbb{F}_p)\cap I(H,\mathbb{F}_p)$, perform the above procedure on each homogeneous piece.
Therefore,
\begin{equation*}
    I(G,\mathbb{F}_p)\supset H^*(G,\mathbb{F}_p)\cap I(H,\mathbb{F}_p). \qedhere
\end{equation*}
\end{proof}

The next lemma records a long exact sequence which is ubiquitous in our arguments.

\begin{lemma}[Gysin long exact sequence]\label{restriction les}
Let $\Gamma$ be a profinite group with character $x_i:\Gamma\to\mathbb Z/2 \mathbb Z$.
Denote the subgroup $\Gamma_i=\ker(x_i)$ of $\Gamma$.
Then the following exact sequence exists for $d\geq1$.
\[
\begin{tikzcd}[column sep=small]
    \cdots \arrow{r}{\mathrm{cor}^{d-1}}
    & H^{d-1}(\Gamma,\mathbb F_2)
      \arrow{r}{\cup x_i}
    & H^d(\Gamma,\mathbb F_2)
      \arrow{r}{\mathrm{res}}
    & H^d(\Gamma_i,\mathbb F_2)
      \arrow{r}{\mathrm{cor}^d}
    & H^d(\Gamma,\mathbb F_2)
      \arrow{r}{\cup x_i}
    & \cdots
\end{tikzcd}
\]
\end{lemma}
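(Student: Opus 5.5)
The statement to prove is Lemma~\ref{restriction les}: the Gysin sequence for an index-$2$ subgroup $\Gamma_i=\ker(x_i)$. We may assume $x_i\neq 0$, so that $[\Gamma:\Gamma_i]=2$. (If $x_i=0$, then $\Gamma_i=\Gamma$, restriction is the identity and corestriction is multiplication by $2=0$; the sequence is then trivially exact.)

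The plan is to realise the sequence as the long exact cohomology sequence of a short exact sequence of coefficient modules. Let $P=\mathbb F_2[\Gamma/\Gamma_i]$ be the permutation module, of dimension $2$ over $\mathbb F_2$. We use the short exact sequence of discrete $\Gamma$-modules
\[
0\to \mathbb F_2\xrightarrow{\ \iota\ } P\xrightarrow{\ \varepsilon\ } \mathbb F_2\to 0,
\]
where $\iota(1)=e_0+e_1$ is the norm element and $\varepsilon$ is the augmentation. Over $\mathbb F_2$ the module $P$ is a nonsplit extension of the trivial module by itself, since $\Gamma$ swaps $e_0$ and $e_1$.

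First, Shapiro's lemma gives $H^d(\Gamma,P)\cong H^d(\Gamma_i,\mathbb F_2)$, because $P=\mathrm{Ind}_{\Gamma_i}^{\Gamma}\mathbb F_2$ (induced and coinduced agree at finite index). This holds for profinite $\Gamma$ with continuous cochains, since the modules are finite and discrete. Under this identification we must check two things.
\begin{itemize}
\item $\iota_*$ becomes $\mathrm{res}$. The map $\mathbb F_2\to \mathrm{CoInd}\,\mathbb F_2$ sending $1$ to the constant function is the unit of the adjunction, and after Shapiro it is restriction. The constant function corresponds to $e_0+e_1$.
\item $\varepsilon_*$ becomes $\mathrm{cor}$. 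The augmentation $\mathrm{Ind}\,\mathbb F_2\to\mathbb F_2$ is the counit, and after Shapiro it is corestriction.
\end{itemize}
These are the standard descriptions of restriction and corestriction via Shapiro's lemma (e.g.\ Brown, or Neukirch--Schmidt--Wingberg for profinite groups). I will cite them rather than verify them at the cochain level.

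The remaining step, which I expect to be the main obstacle, is to identify the connecting map $\delta:H^{d-1}(\Gamma,\mathbb F_2)\to H^d(\Gamma,\mathbb F_2)$ with $\cup\,x_i$. The connecting homomorphism is given by cup product with the extension class $[P]\in \mathrm{Ext}^1_\Gamma(\mathbb F_2,\mathbb F_2)=H^1(\Gamma,\mathbb F_2)$, possibly up to a sign, which is irrelevant mod $2$. Concretely, naturality of $\delta$ with respect to cup products reduces the claim to degree $0$. There, lift $1\in\mathbb F_2$ to $e_0\in P$. The coboundary is the $1$-cocycle $\gamma\mapsto \gamma e_0-e_0$. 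This equals $0$ if $\gamma\in\Gamma_i$ and $e_0+e_1=\iota(1)$ otherwise, so it is exactly $x_i$. Hence $\delta(1)=x_i$.

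For general degree, take $a\in H^{d-1}(\Gamma,\mathbb F_2)$ and tensor the short exact sequence with the trivial module. The compatibility $\delta(1\cup a)=\delta(1)\cup a$ holds for the cup product of a connecting map with classes in trivial coefficients, and it gives $\delta(a)=x_i\cup a$ (graded commutativity is irrelevant mod $2$). Splicing these identifications into the long exact sequence produces the stated sequence. Its initial terms are $0\to H^0(\Gamma)\to H^0(\Gamma_i)\to H^0(\Gamma)\xrightarrow{x_i}H^1(\Gamma)\to\cdots$, which is consistent with the claim for $d\ge1$.
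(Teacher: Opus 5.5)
This is essentially the paper's proof. The paper's middle module $\mathbb F_2\oplus\mathbb F_2$ with $\gamma\cdot(a,b)=(a+x_i(\gamma)b,b)$ is exactly your permutation module $P$, with $e_0=(0,1)$, $e_1=(1,1)$ and $\iota(1)=(1,0)$. You also spell out why $\iota_*$, $\varepsilon_*$ and $\delta$ become $\mathrm{res}$, $\mathrm{cor}$ and $\cup\, x_i$, which the paper leaves implicit in its appeal to Shapiro's lemma.

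One correction: your parenthetical about $x_i=0$ is wrong. In that case $\mathrm{cor}^{\Gamma}_{\Gamma}$ is multiplication by the index $1$, i.e.\ the identity. Then $H^d(\Gamma,\mathbb F_2)\xrightarrow{\mathrm{id}}H^d(\Gamma,\mathbb F_2)\xrightarrow{\mathrm{id}}H^d(\Gamma,\mathbb F_2)$ is not exact whenever $H^d(\Gamma,\mathbb F_2)\neq 0$. Even granting your $\mathrm{cor}=0$, exactness would fail at the target of $\mathrm{cor}$, whose image is $0$ while the kernel of $\cup\, 0$ is everything. So the lemma genuinely requires $x_i\neq 0$, as the paper tacitly assumes.
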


\begin{proof}
Endow the $\Gamma$-module exact sequence
\begin{equation*}
\begin{tikzcd}
    0\arrow{r} &\mathbb{F}_2\arrow{r} &\mathbb{F}_2\oplus\mathbb{F}_2\arrow{r} &\mathbb{F}_2\arrow{r} &0
\end{tikzcd}
\end{equation*}
with $\Gamma$-action on the middle term $\gamma\cdot(a,b)=(a+x_i(\gamma)\cdot b,b)$ for $a,b\in\mathbb F_2$ and $\gamma\in G$.
The middle term is isomorphic to $\text{Ind}_{\Gamma_i}^{\Gamma}(\mathbb Z/2\mathbb Z)$ as $\Gamma$-modules.
Shapiro's lemma applied to the induced long exact sequence completes the proof.
See \cite[Theorem 5]{Rusin} for another proof.
\end{proof}

As a first application of the above sequence, we prove a partial result classifying eventually negligible elements.
\begin{lemma}\label{index 2 subgroups}
Let $G$ be a finite 2-group that is not elementary abelian and $F$ a field.
Assume that the restriction of $u\in H^*(G,\mathbb F_2)$ is eventually negligible over $F$ in the cohomology of each index 2 subgroup of $G$.
Then $u$ is eventually negligible in $H^*(G,\mathbb F_2)$ over $F$.
\end{lemma}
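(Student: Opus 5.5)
The plan is to transport the classical proof of Quillen--Venkov into Galois cohomology. I would combine Serre's theorem on products of Bocksteins with the Gysin sequence of Lemma~\ref{restriction les}, applied to an absolute Galois group instead of to $G$ itself.

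\textbf{Step 1 (Serre's theorem).} Since $G$ is a finite $2$-group that is not elementary abelian, Serre's theorem on products of Bocksteins gives nonzero classes $x_1,\dots,x_n\in H^1(G,\mathbb F_2)$ such that $\beta(x_1)\cdots\beta(x_n)=0$. With $\mathbb F_2$ coefficients we have $\beta(x)=x^2$ for $x$ of degree one. Hence
\[
x_1^2x_2^2\cdots x_n^2=0 \quad\text{in } H^{2n}(G,\mathbb F_2).
\]
Each $x_i$ is a nonzero homomorphism $G\to\mathbb Z/2\mathbb Z$, so $H_i:=\ker(x_i)$ is an index $2$ subgroup of $G$. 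By hypothesis there is a $k\geq1$, which we may take uniform in $i$, such that $\mathrm{res}^G_{H_i}(u)^k$ is negligible over $F$ for every $i$. I would first reduce to $u$ homogeneous. The radical of a homogeneous ideal is homogeneous, so $N_F$ is homogeneous, and it suffices to treat each homogeneous component.

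\textbf{Step 2 (pass to a versal field).} As in the proof of Lemma~\ref{lemma:corestriction}, take a faithful representation $V$ and set $K=F(V)^G$ and $K_i=F(V)^{H_i}$. Then $\Gamma_{K_i}$ is the index $2$ subgroup $\ker(\bar x_i)\subset\Gamma_K$, where $\bar x_i=\inf(x_i)\in H^1(K,\mathbb F_2)$. By \cite[Proposition 2.1]{GM}, negligibility of a class is equivalent to the vanishing of its inflation to $H^*(K)$, respectively to $H^*(K_i)$. Inflation commutes with restriction, so
\[
\mathrm{res}_{K_i/K}\bigl(\inf(u^k)\bigr)=\inf_{H_i}\bigl(\mathrm{res}(u)^k\bigr)=0 .
\]
Applying Lemma~\ref{restriction les} to $\Gamma=\Gamma_K$ with the character $\bar x_i$, exactness at $H^{*}(\Gamma_K)$ gives classes $a_i\in H^*(K,\mathbb F_2)$ with
\[
\inf(u^k)=a_i\cup\bar x_i \qquad\text{for each } i .
\]

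\textbf{Step 3 (multiply).} In the graded-commutative ring $H^*(K,\mathbb F_2)$, where signs are irrelevant mod $2$, we compute
\[
\inf(u^{2nk})=\prod_{i=1}^n (a_i\bar x_i)^2=\Bigl(\prod_i a_i^2\Bigr)\inf\Bigl(\prod_i x_i^2\Bigr)=0 .
\]
By \cite[Proposition 2.1]{GM} again, $u^{2nk}$ is negligible over $F$, so $u$ is eventually negligible.

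The main obstacle is Step 2. One must check that $K_i/K$ really realizes the character $\bar x_i$, so that the Gysin sequence applies to $\Gamma_K$, and that the two inflation maps are compatible with restriction. One also needs the kernel criterion for both $G$ and $H_i$ using the same $V$; this is legitimate because $V$ stays faithful for $H_i$. The input from Serre's theorem is quoted rather than reproved.
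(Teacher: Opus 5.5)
Your proposal is correct and takes essentially the same route as the paper. Both arguments pass to the versal field $K=F(V)^G$ and apply the Gysin sequence to $\Gamma_K$ with the inflated characters $\bar x_i$, so that $\mathrm{inf}(u^k)$ is divisible by each $\bar x_i$; a power is then killed by Serre's Bockstein relation $\prod x_i^2=0$, with \cite[Proposition 2.1]{GM} used at both ends. The differences are cosmetic: you work directly with the kernels of the characters in Serre's sequence and make the exponent $2nk$ explicit, whereas the paper first lists all index-$2$ subgroups.
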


\begin{proof}
Since the squaring map is linear in the ring $H^*(G,\mathbb F_2)$, it is sufficient to check the claim on homogeneous $u$.
Denote by $H_i$ the index 2 subgroups of $G$ for $1\leq i\leq n$.
Let $V$ be a faithful $F$-representation of $G$.
Define $K=F(V)^G$ and $K_i=F(V)^{H_i}$.
We obtain group homomorphisms $j:\Gamma_K\to G$ and $j_i:\Gamma_{K_i}\to H_i$.

Each index 2-subgroup $H_i$ of $G$ is the kernel of a character $x_i\in H^1(G,\mathbb F_2)$.
Then $j^*(x_i)\in H^1(K,\mathbb F_2)$ is a group homomorphism from $\Gamma_K$ to $\mathbb Z/2\mathbb Z$.
Lemma \ref{restriction les} and $j$ induce the following commutative diagram with exact rows in each degree $d\geq1$.
\begin{equation*}
\begin{tikzcd}
    H^{d-1}(G,\mathbb F_2)\arrow{r}{\cup x_i}\arrow{d}{j^*} &H^{d}(G,\mathbb F_2)\arrow{r}{\text{res}_i}\arrow{d}{j^*} &H^{d}(H_i,\mathbb F_2)\arrow{d}{j_i^*} \\
    H^{d-1}(K,\mathbb F_2)\arrow{r}{\cup j^*(x_i)} &H^{d}(K,\mathbb F_2)\arrow{r}{\text{res}_i} &H^{d}(K_i,\mathbb F_2)
\end{tikzcd}
\end{equation*}

Replace $u$ by some power of $u$ so that $u$ restricts to a negligible element over $F$ in the cohomology of each $H_i$.
Then $\text{res}_i(j^*(u))=j_i^*(\text{res}_i(u))=0$ for all $1\leq i\leq n$.
Since $\text{res}_i(j^*(u))$ is trivial for each $1\leq i\leq n$, exactness of the bottom row implies that $j^*(u)$ is divisible by $j^*(x_i)$ for each $1\leq i\leq n$.
Let $\beta:H^1(G,\mathbb F_2)\to H^2(G,\mathbb F_2)$ be the Bockstein homomorphism, which, in this case, is the squaring map.
Since $G$ is not an elementary abelian 2-group, \cite[Proposition 4]{Se} proves there is some sequence of the $x_i$ for which $\prod\beta(x_i)=\prod x_i^2=0$.
Thus, there is a positive integer $m$ for which $j^*(u)^{2m}$ is divisible by $\prod j^*(x_i)^2=\prod j^*(\beta(x_i))=j^*(\prod\beta(x_i))=0$.
Since $j^*(u)$ is nilpotent in $H^*(K,\mathbb F_2)$, $u\in H^*(G,\mathbb F_2)$ is eventually negligible over $F$. 
\end{proof}

\begin{proof}[Proof of Theorem \ref{detect on elem ab}]\label{proof-theorem-elem-ab}
Let $E$ be an elementary abelian $p$-subgroup of $G$.
In the case that $F$ is not a formally real field, \cite[Theorem 2.2]{GhermanMerkurjevJPAA} proves that each class of $H^*(G,\mathbb{F}_p)$ and each class of $H^*(E,\mathbb{F}_p)$ is eventually negligible.
For the theorem when $F$ is formally real, the case $p$ odd is also covered by \cite[Theorem 2.2]{GhermanMerkurjevJPAA} since in positive degree every class is eventually negligible.
Thus, the only remaining nontrivial case over a field $F$ that is formally real is $p=2$.
If $p=2$, $F$ is formally real, and $G$ is odd order, $G$ contains no non-trivial elementary abelian 2-subgroups and $H^{>0}(G,\mathbb F_2)$ is trivial.
The result trivially holds.
We are left to prove the case in which $p=2$, $F$ is formally real, and $G$ is even order.

By Lemma \ref{half conjecture}, we need only prove the following.
If $u\in H^*(G,\mathbb F_2)$ restricts to an eventually negligible element in $H^*(E,\mathbb F_2)$ for each elementary abelian 2-subgroup $E\subset G$, then $u$ is eventually negligible.
If the result holds for a Sylow 2-subgroup of $G$, then Lemma \ref{intersection lemma} implies that the result holds for $G$.
We assume without loss of generality that $G$ is a 2-group.

We will proceed by induction on $|G|$.
The base case is the cyclic group of order 2, for which there are no non-trivial negligible classes by \cite[Theorem 4.1]{GhermanMerkurjevJPAA}.
In the inductive step, if $G$ is an elementary abelian 2-group, the result holds by inspection \cite[Theorem 4.1]{GhermanMerkurjevJPAA}.
Lemma \ref{index 2 subgroups} proves the inductive step when $G$ is not elementary abelian.
\end{proof}

\section{Sketch of methods for detecting (eventually) negligible classes}
\subsection{Eventually negligible elements}
Theorems \ref{negideal} and \ref{detect on elem ab} will serve as our primary detection method for eventually negligible elements. Our methodology is as follows.
\begin{enumerate}
    \item We begin by taking the restriction maps $H^*(G, \mathbb F_2) \rightarrow H^*(E, \mathbb F_2)$
    for all \textit{maximal} elementary abelian subgroups $E$ of $G$.
    \item Using Theorems \ref{negideal} and \ref{detect on elem ab}, we identify candidate eventually negligible classes.
    Denote by $N$ the ideal generated by all the eventually negligible elements found in this step.
    We have that $N\subset N(G,\mathbb F_2)$ and wish to prove that $N \supset N(G,\mathbb F_2)$ to establish equality.
    \item To do this, we quotient by $N$ and consider $\mathbb F_2[x_1,\dots,x_n] / N$.
    \item Looking at elements in the quotient ring, we take the restriction map and check that there are no non-trivial elements remaining.
    These elements take the form of polynomials, so we can plug in arbitrary values from $\mathbb F_2$ to the generators $\{x_1,\dots,x_n\}$.
    By Proposition \ref{elem ab 2-group vanishing} (\cite[Lemma 26.4]{SerreBook}), an element is negligible in $H^*(E,\mathbb F_2)$ if and only if the corresponding polynomial evaluates to 0 for all $\mathbb F_2$ inputs of $\{x_1,\dots,x_n\}$.
    Therefore, we want to prove that each non-trivial element of $H^*(G,\mathbb F_2)/N$ restricts to a polynomial that does not evaluate to 0 for some choice of inputs for $\{x_1,\dots,x_n\}$ in $H^*(E,\mathbb F_2)$.
    In other words, we prove $N=N(G, \mathbb F_2)$ by showing the injectivity of the map induced by restriction on the right-hand side of the diagram below.
\end{enumerate}
\begin{equation}\label{comm diagram}
    \begin{tikzcd}
    H^*(G,\mathbb F_2)\arrow{r}\arrow{d}{\text{res}} &H^*(G,\mathbb F_2) / N\arrow[hookrightarrow]{d}{\overline{\text{res}}} \\
    H^*(E,\mathbb F_2) \arrow{r} & H^*(E,\mathbb F_2) / I(E,\mathbb F_2)
\end{tikzcd}
\end{equation}

\subsection{Negligible elements}
Once we determine the eventually negligible elements, let us now briefly sketch how the Gysin long exact sequence is used to detect actually negligible elements.
Firstly, we have a clear picture on what type of elements are not negligible.
\begin{theorem}[cf. Corollary 2.2 and Theorem 5.2 of \cite{GM}]\label{nonneg-low-degree}
    There exist no non-trivial negligible classes of degree 1.
    In addition, there exist no non-trivial negligible classes of degree 2 over $\mathbb Q$.
\end{theorem}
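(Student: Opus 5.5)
The statement has two parts, and I will treat them separately.

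\textbf{Degree 1.} Suppose $u \in H^1(G,\mathbb F_2)$ is a nonzero class. Since the coefficients are trivial, $u$ is a nonzero homomorphism $G\to\mathbb Z/2\mathbb Z$ with kernel $H$ of index $2$. I need a field $L/F$ and a homomorphism $j:\Gamma_L\to G$ with $j^*(u)\ne 0$. To build one, I take a faithful representation $V$ and set $K=F(V)^G$. The extension $F(V)/K$ is Galois with group $G$, so it gives a surjection $j:\Gamma_K\to G$. Then $j^*(u)=u\circ j$ is a surjection onto $\mathbb Z/2\mathbb Z$, hence nonzero in $H^1(K,\mathbb F_2)$. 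This contradicts negligibility, so the degree-one part is straightforward.

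\textbf{Degree 2 over $\mathbb Q$.} Let $u\in H^2(G,\mathbb F_2)$ be negligible over $\mathbb Q$. By Lemma~\ref{intersection lemma} with $H$ a Sylow $2$-subgroup, restriction is injective and preserves negligibility. So I may assume $G$ is a $2$-group. I then follow the strategy of \cite{GM}, which uses the interpretation of $u$ via the embedding problem from the introduction. The class $u$ corresponds to a central extension
\[
1\to\mathbb Z/2\mathbb Z\to G'\to G\to 1 .
\]
Negligibility says that every $G$-Galois algebra over every $L\supset\mathbb Q$ lifts to $G'$. Applied to the generic torsor $\Gamma_K\to G$ with $K=\mathbb Q(V)^G$, this gives $\operatorname{inf}(u)=0$ in $H^2(K,\mathbb F_2)$, by \cite[Proposition 2.1]{GM}.

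The plan is then to show that $u$ restricts to zero on every subgroup $E$ that is a quotient of some $\Gamma_L$ with $L\supset\mathbb Q$. This forces the extension to split over enough subgroups to conclude $u=0$.

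First I handle elementary abelian subgroups $E$. By Theorem~\ref{negideal}, the degree-two part of $I(E,\mathbb F_2)$ is zero, because all generators have degree $3$. By Lemma~\ref{half conjecture}, $\operatorname{res}_E(u)$ is negligible, so $\operatorname{res}_E(u)=0$.

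\textbf{Main obstacle.} Vanishing on elementary abelian subgroups alone does not kill a degree-two class. For example, on $\mathbb Z/4$ the class $x^2$ type extensions need care, and for $\mathbb Z/4$ over $\mathbb Q$ the relevant embedding problem fails for $\mathbb Q(\sqrt{-1})$. So I would need the full cyclic-subgroup and Galois-theoretic input of \cite[Theorem 5.2]{GM}.

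Concretely, I would realize $G$ in a form where the obstruction is computed by quaternion-algebra and Hilbert-symbol data. Over a field such as $\mathbb Q((t_1))\cdots((t_n))$, one can realize abelian and more general $2$-groups through iterated Laurent series and residue maps. Then I would show that a nonvanishing $u$ yields a nonsplit quaternion algebra.

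Since this is exactly the content cited from \cite{GM}, in the paper I would simply invoke their Corollary~2.2 and Theorem~5.2 for the second claim.
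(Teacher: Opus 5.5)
Your degree-one argument is correct: for $K=F(V)^G$ the generic torsor $\Gamma_K\to G$ is surjective, so $j^*(u)=u\circ j\neq 0$ for every nonzero $u\in\mathrm{Hom}(G,\Z/2\Z)$; this is essentially \cite[Corollary 2.2]{GM}. For degree two you end up invoking \cite[Theorem 5.2]{GM}, which is exactly what the paper does, since it gives no independent proof of this statement.

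You should delete the intermediate degree-two heuristics rather than keep them as motivation, because they are wrong as written. In $H^*(\Z/4\Z,\F_2)$ one has $x^2=0$, and the only nonzero degree-two class $y$ restricts to $t^2\neq 0$ on $\Z/2\Z$, so $\Z/4\Z$ is no obstacle at all. The $\Q(\sqrt{-1})$ embedding problem only shows that $t^2$ is not negligible for $\Z/2\Z$; genuine obstacles are nilpotent classes such as $a^2\in H^2(Q_8,\F_2)$. Your plan of proving $\res_E(u)=0$ for every Galois-realizable subgroup $E$ is also circular, since $E=G$ is realized by the generic torsor, and negligibility only gives $j^*(u)=0$ in Galois cohomology, not vanishing in $H^2(E,\F_2)$.
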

Thus, if any generator has degree 1 or 2, we can automatically rule it out from being actually negligible. The following result along with Lemmas \ref{half conjecture} and \ref{lemma:corestriction} are key for finding actually negligible elements.

\begin{prop}\label{corestriction identity}
    For any two classes $a,b\in H^*(G,\mathbb F_2)$ in the cohomology ring, we have
    $$\operatorname{cor}(a\cdot \operatorname{res}(b))=\operatorname{cor}(a)\cdot b$$
\end{prop}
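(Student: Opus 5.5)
Here $a$ should be read as a class in $H^*(H,\mathbb F_2)$ for a subgroup $H\le G$, and $b$ as a class in $H^*(G,\mathbb F_2)$; this is the only reading under which both sides make sense. The identity is then the classical projection formula for transfer. I would prove it at the cochain level, using one projective resolution for both groups and one diagonal approximation for both cup products.

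\textbf{Setup.} Let $P_\bullet\to\mathbb Z$ be the standard (bar) resolution over $\mathbb ZG$. Since $\mathbb ZG$ is free over $\mathbb ZH$, $P_\bullet$ is also a projective resolution over $\mathbb ZH$. Therefore
\[
H^*(H,\mathbb F_2)=H^*(\mathrm{Hom}_H(P_\bullet,\mathbb F_2)), \qquad H^*(G,\mathbb F_2)=H^*(\mathrm{Hom}_G(P_\bullet,\mathbb F_2)).
\]
In this model restriction is the inclusion $\mathrm{Hom}_G\subset\mathrm{Hom}_H$. Fix left coset representatives $g_1,\dots,g_n$ of $H$ in $G$. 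Corestriction is given on cochains by
\[
\mathrm{cor}(f)(x)=\sum_{k} f(g_k^{-1}x).
\]
This is $G$-equivariant and independent of the choice of representatives, because $f$ is $H$-equivariant and $\mathbb F_2$ has trivial action. It commutes with the differential, and it is the standard description of transfer.

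\textbf{Cup products.} Choose a $G$-equivariant diagonal approximation $\Delta:P_\bullet\to P_\bullet\otimes P_\bullet$, for instance the Alexander--Whitney map on the bar resolution. It is in particular $H$-equivariant, so the same $\Delta$ computes cup products in both $H^*(H,\mathbb F_2)$ and $H^*(G,\mathbb F_2)$. This common choice is what makes restriction a ring map in this model.

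\textbf{Computation.} Take cocycles $f\in\mathrm{Hom}_H(P_p,\mathbb F_2)$ representing $a$ and $\varphi\in\mathrm{Hom}_G(P_q,\mathbb F_2)$ representing $b$. Then
\[
\mathrm{cor}(f\cdot\varphi)(x)=\sum_k (f\otimes\varphi)\bigl(\Delta(g_k^{-1}x)\bigr)=\sum_k (f\otimes\varphi)\bigl((g_k^{-1}\otimes g_k^{-1})\Delta x\bigr).
\]
Write $\Delta x=\sum y\otimes z$. Since $\varphi$ is $G$-invariant, $\varphi(g_k^{-1}z)=\varphi(z)$, so the sum equals
\[
\sum_{y\otimes z}\Bigl(\sum_k f(g_k^{-1}y)\Bigr)\varphi(z)=(\mathrm{cor} f\otimes\varphi)(\Delta x).
\]
This is the cochain representing $\mathrm{cor}(a)\cdot b$. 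Since every map used is a chain map, the identity passes to cohomology.

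\textbf{Main obstacle.} The real point is justifying the two identifications above: that this cochain formula computes the transfer, and that the cup product is independent of the resolution and diagonal chosen. Both are standard facts (Brown, Ch.~III and Ch.~V). Over $\mathbb F_2$ no sign issues arise, and the symmetric identity $\mathrm{cor}(\mathrm{res}(b)\cdot a)=b\cdot\mathrm{cor}(a)$ follows by the same computation.

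\textbf{Fallback.} If one prefers to avoid cochains, use dimension shifting. In degree $0$ the identity is $\mathrm{cor}^0(a\,b)=[G:H]\,a\,b$, which holds because $\mathrm{cor}^0$ is the norm map. One then extends to all degrees by uniqueness of $\delta$-functorial maps compatible with cup products.
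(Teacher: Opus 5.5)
Your reading of the statement is the right one. As printed, $a\in H^*(G,\mathbb F_2)$ is a typo: the identity only makes sense with $a\in H^*(H,\mathbb F_2)$ for a subgroup $H\le G$ and $b\in H^*(G,\mathbb F_2)$. That is exactly how the paper uses it later, e.g.\ $\mathrm{cor}(u'(v')^2)=\mathrm{cor}(u')u^2$.

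The paper gives no proof. It simply invokes the standard projection formula for transfer, as in Brown's \emph{Cohomology of Groups}, Ch.~V. Your cochain-level argument is a correct, self-contained proof of that fact. Computing both groups with one $G$-free resolution and one $G$-equivariant diagonal does two things: it makes restriction the inclusion $\mathrm{Hom}_G\subset\mathrm{Hom}_H$, and it makes the two cup products compatible. After that, the identity is exactly your computation, which uses only $\Delta(g_k^{-1}x)=(g_k^{-1}\otimes g_k^{-1})\Delta x$ and the $G$-invariance of $\varphi$. The paper's citation buys brevity; your route shows explicitly which choices must be made compatibly.

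The only soft spot is the fallback, which your main argument does not need. Dimension shifting forces you out of trivial coefficients, since the quotients $\mathrm{Coind}(\mathbb F_2)/\mathbb F_2$ have nontrivial action. So the degree-$0$ input must be $N_{G/H}(a\otimes b)=N_{G/H}(a)\otimes b$ for $a\in A^H$, $b\in B^G$, not just multiplication by $[G:H]$. You would also have to shift in both variables.
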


Lastly, in degree one, the group cohomology is isomorphic to homomorphisms from the group to the coefficients. Since our group $G$ is not necessarily abelian, and we are working with abelian coefficients, we know that these Homs must factor through the commutator first, and so are in bijection with homomorphisms to $\mathbb F_2$ from the abelianization of $G$, in other words
$$H^1(G, \mathbb F_2) \cong \text{Hom}(G, \mathbb F_2) \cong \text{Hom}(G / [G, G], \mathbb F_2).$$
For all the groups in consideration with the following method, this computation is entirely tractable.

Equipped with these facts, the plan is then clear in principle.
We begin by finding an index two subgroup of $G$ whose negligible ideal is already known, and construct the long exact sequence using it.
Then we determine explicitly the restriction map in degree 1, and since restriction is a ring homomorphism, this lifts to arbitrarily high degrees in the cohomology ring, thus giving us information about higher degrees. Along with Proposition \ref{corestriction identity}, this gives us enough data to explicitly write out much of the long exact sequence.
Through a diagram chase, we can then use the criteria above to rule out elements that cannot possibly be negligible.
Finally, as we know a priori the eventually negligible elements in $G$, we check if these elements restrict or corestrict to a negligible element in $H$, thus forcing it to be negligible.
\section{Computations of the mod-2 negligible ideal of dihedral groups, cyclic 2-groups, and generalized quaternions}\label{computation-sec3}

\subsection{Negligible ideal of the mod-2 cohomology of dihedral groups}\label{dihedral}

Let $D_{2n}$ be the dihedral group of order $2n$.
When $n$ is odd, \cite[Theorem 5.6]{dihedralcohomology} proves
\begin{equation*}
    H^*(D_{2n},\mathbb F_2)\simeq\mathbb F_2[u],
\end{equation*}
where $\deg(u)=1$.

\begin{prop}\label{dihedralmodd}
Let $D_{2n}$ be the dihedral group of order $2n$ with $n$ an odd positive integer.
Then $I(D_{2n},\mathbb F_2)$ is trivial and the negligible ideal is radical.
\end{prop}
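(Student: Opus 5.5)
The plan is to reduce everything to the Sylow $2$-subgroup. Since $n$ is odd, a Sylow $2$-subgroup of $D_{2n}$ is the cyclic group $C_2=\langle s\rangle$ generated by a reflection $s$. Its index $[D_{2n}:C_2]=n$ is coprime to $2$, so Lemma~\ref{intersection lemma} applies. It gives that restriction $H^*(D_{2n},\mathbb F_2)\to H^*(C_2,\mathbb F_2)$ is injective and that
\begin{equation*}
I(D_{2n},\mathbb F_2)=H^*(D_{2n},\mathbb F_2)\cap I(C_2,\mathbb F_2).
\end{equation*}

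The first step is to identify $I(C_2,\mathbb F_2)$. We have $H^*(C_2,\mathbb F_2)=\mathbb F_2[x]$ with $\deg x=1$, and $C_2$ is elementary abelian of rank one. Theorem~\ref{negideal} then says $I(C_2,\mathbb F_2)$ is generated by $x\cdot x^2+x^2\cdot x=2x^3=0$, so it is trivial.

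Equivalently, one can use Proposition~\ref{elem ab 2-group vanishing}. A homogeneous $x^d$ with $d\ge1$ evaluates to $1$ at $x=1$, so it is not negligible. In degree $0$ the class $1$ is not negligible either, since its image under any $j^*$ is $1\neq0$. This matches the base case cited from \cite[Theorem 4.1]{GhermanMerkurjevJPAA}.

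It follows that $I(D_{2n},\mathbb F_2)=0$. Inhomogeneous elements are handled degreewise, since the ideal is graded by definition.

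For radicality, I must show $N(D_{2n},\mathbb F_2)=\sqrt{0}$ is zero, that is, that $H^*(D_{2n},\mathbb F_2)$ has no nonzero nilpotents. This is immediate from $H^*(D_{2n},\mathbb F_2)\simeq\mathbb F_2[u]$ by \cite[Theorem 5.6]{dihedralcohomology}, since a polynomial ring over a field is reduced. Alternatively, it follows from the injectivity of restriction into the reduced ring $\mathbb F_2[x]$, with $u\mapsto x$.

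As a cross-check, Corollary~\ref{quillen-venkov} also gives $N=0$. An eventually negligible class restricts to a negligible class on $C_2$, hence to $0$, and it is therefore $0$ by injectivity of restriction.

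There is no real obstacle here. The only point needing care is the degree-$0$ and inhomogeneous bookkeeping in the definition of $I$, together with confirming that the Sylow subgroup is exactly $C_2$, which holds because $n$ is odd.
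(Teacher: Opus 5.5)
Your proposal is correct and follows the paper's argument: you reduce to the odd-index Sylow subgroup $\mathbb Z/2\mathbb Z$ via Lemma~\ref{intersection lemma}, and then use $I(\mathbb Z/2\mathbb Z,\mathbb F_2)=0$. You also make explicit the radicality step that the paper leaves implicit: $N(D_{2n},\mathbb F_2)=\sqrt{0}=0$, because $H^*(D_{2n},\mathbb F_2)\simeq\mathbb F_2[u]$ is reduced (equivalently, it embeds by restriction into the reduced ring $\mathbb F_2[x]$).
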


\begin{proof}
The Sylow 2-subgroup of $D_{2n}$ is isomorphic $\Z/2\Z$.
By \cite[Lemma 26.4]{SerreBook}, $I(\mathbb Z / 2\mathbb Z,\mathbb F_2)$ has trivial negligible cohomology, so $I(D_{2n},\mathbb F_2)$ is trivial by Lemma \ref{intersection lemma}.
\end{proof}

When $n\equiv2$ (mod 4), \cite[Theorem 5.5]{dihedralcohomology} proves 
\begin{equation*}
    H^*(D_{2n},\mathbb F_2)\simeq\mathbb F_2[u,v,w]/\langle u(u+v) + w\rangle,
\end{equation*}
where $\deg(u)=\deg(v)=1$ and $\deg(w)=2$.

\begin{prop} \label{dihedralm2}
Let $D_{2n}$ be the dihedral group of order $2n$ with $n\equiv2$ (mod 4).
Then $I(D_{2n},\mathbb{F}_2)=N(D_{2n},\mathbb F_2)=\langle uv(u+v)\rangle=\langle vw\rangle$ is radical.
\end{prop}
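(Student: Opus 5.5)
Since $n\equiv 2 \pmod 4$, the Sylow $2$-subgroup of $D_{2n}$ is a Klein four group $V\cong \Z/2\Z\times\Z/2\Z$ of odd index $n/2$. The plan is therefore to reduce everything to $V$ via Lemma \ref{intersection lemma}, and then read off the answer from Theorem \ref{negideal} and Proposition \ref{elem ab 2-group vanishing}.

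\textbf{Step 1: simplify the ring.} The relation $u(u+v)+w=0$ says $w=u^2+uv$. Hence
$$H^*(D_{2n},\mathbb F_2)\cong \mathbb F_2[u,v]$$
is a polynomial ring on two degree-one generators. In particular $uv(u+v)=v(u^2+uv)=vw$, which gives the equality of the two descriptions $\langle uv(u+v)\rangle=\langle vw\rangle$.

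\textbf{Step 2: restriction is an isomorphism.} By Lemma \ref{intersection lemma}, $\mathrm{res}:H^*(D_{2n},\mathbb F_2)\to H^*(V,\mathbb F_2)=\mathbb F_2[x,y]$ is injective. In degree $d$ both sides have dimension $d+1$, since both are polynomial rings on two degree-one generators. So $\mathrm{res}$ is a graded ring isomorphism. In particular $\mathrm{res}(u)$ and $\mathrm{res}(v)$ form a basis of $H^1(V,\mathbb F_2)$, so after a linear change of variables we may take $\mathrm{res}(u)=x$ and $\mathrm{res}(v)=y$.

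\textbf{Step 3: identify the negligible ideal.} By Lemma \ref{intersection lemma},
$$I(D_{2n},\mathbb F_2)=H^*(D_{2n},\mathbb F_2)\cap I(V,\mathbb F_2).$$
Because restriction is onto, this intersection is just the preimage of $I(V,\mathbb F_2)$. By Theorem \ref{negideal} with rank $2$, $I(V,\mathbb F_2)$ is generated by $xy^2+x^2y$, because the terms with $i=j$ vanish. The polynomial $xy(x+y)$ is symmetric and invariant under every $GL_2(\mathbb F_2)$ change of basis, since it is the product of the three nonzero linear forms. Hence its preimage is $uv(u+v)$ regardless of the identification chosen in Step 2. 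This gives $I(D_{2n},\mathbb F_2)=\langle uv(u+v)\rangle$.

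\textbf{Step 4: radicality.} By Proposition \ref{elem ab 2-group vanishing}, $I(V,\mathbb F_2)$ is the ideal of polynomials vanishing on $\mathbb F_2^2$, which is radical. Radicality transfers through the ring isomorphism of Step 2. Alternatively, Corollary \ref{quillen-venkov} shows that an eventually negligible class restricts to a negligible class on $V$, hence is negligible by Step 3. Therefore $N=I$.

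The only step needing care is Step 2: making sure the given generators $u,v$ restrict to independent degree-one classes. This follows formally from injectivity in degree $1$ together with the dimension count, so I expect no real obstacle.
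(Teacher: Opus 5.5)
Your proposal is correct and follows essentially the same route as the paper: you pass to the Klein four Sylow subgroup of odd index via Lemma~\ref{intersection lemma}, read off $I(E,\mathbb F_2)=\langle ab(a+b)\rangle$ from Theorem~\ref{negideal}, and pull radicality back along restriction. Your dimension count showing that restriction is an isomorphism $\mathbb F_2[u,v]\to\mathbb F_2[x,y]$ adds something useful, because it justifies a step the paper leaves tacit: that $H^*(D_{2n},\mathbb F_2)\cap I(E,\mathbb F_2)$ is generated by $uv(u+v)$ inside $H^*(D_{2n},\mathbb F_2)$.
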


\begin{proof}
Write $n=2\ell$ with $\ell$ odd. Then $|D_{2n}|=4\ell$, and a Sylow $2$-subgroup $E\subset D_{2n}$ is elementary abelian of rank $2$ and has odd index $\ell$.
Choose degree-one generators $a,b\in H^1(E,\mathbb F_2)$ so that, after relabeling if necessary, $\res_E(u)=a$ and $\res_E(v)=b$.
By Lemma \ref{intersection lemma},
\[
I(D_{2n},\mathbb F_2)=H^*(D_{2n},\mathbb F_2)\cap I(E,\mathbb F_2).
\]
For $E\simeq(\mathbb Z/2\mathbb Z)^2$, Theorem \ref{negideal} gives
\[
I(E,\mathbb F_2)=\langle ab(a+b)\rangle.
\]
Therefore $I(D_{2n},\mathbb F_2)=\langle uv(u+v)\rangle$. Since $u(u+v)+w=0$ in this case, this ideal is also $\langle vw\rangle$.
The ideal $I(E,\mathbb F_2)$ is radical, so its preimage under restriction is radical.
\end{proof}

We now consider the case $n\equiv 0$ (mod $4$). In \cite[Theorem 5.5]{dihedralcohomology}, the mod-2 cohomology ring for $n\equiv 0$ (mod 4) is
\begin{equation*}
    H^*(D_{2n},\mathbb F_2)\simeq\mathbb F_2[u,v,w]/\langle u(u+v)\rangle,
\end{equation*}
where $\deg(u)=\deg(v)=1$ and $\deg(w)=2$.

\begin{prop}\label{dihedral even n}
    Let $D_{2n}$ be the dihedral group of order $2n$ with $n\equiv0$ (mod 4) an even positive integer.
    The eventually negligible ideal is $$N(D_{2n},\mathbb F_2) = \langle uw, vw\rangle.$$
\end{prop}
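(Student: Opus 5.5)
The plan is to apply Corollary~\ref{quillen-venkov}: a class is eventually negligible exactly when its restriction to every elementary abelian $2$-subgroup is negligible. Since restriction to a subgroup factors through restriction to any larger one, and negligibility is preserved by restriction (Lemma~\ref{half conjecture}), it suffices to test on the maximal ones. Write $D_{2n}=\langle r,s\mid r^n=s^2=1,\ srs=r^{-1}\rangle$ with $n\equiv 0 \pmod 4$. Up to conjugacy, the maximal elementary abelian $2$-subgroups are the two Klein four groups
\[
E_1=\langle r^{n/2},s\rangle, \qquad E_2=\langle r^{n/2},rs\rangle .
\]
Conjugate subgroups give the same test, because restriction to a conjugate differs by an automorphism of $E$ preserving $I(E,\mathbb F_2)$. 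I then follow the methodology of diagram~\eqref{comm diagram}.

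\textbf{Step 1 (restriction maps).} I determine $\res_{E_i}$ on the generators. The commutator subgroup is $\langle r^2\rangle$, and it contains $r^{n/2}$ because $n/2$ is even. Hence every degree-one class kills $r^{n/2}$, so $u,v$ restrict into the span of the single character $a_i$ that is nonzero on the reflection generator of $E_i$. The relation $u(u+v)=0$ says that $u$ and $u+v$ are the two characters whose kernels are the two dihedral index-$2$ subgroups. After relabeling, this gives
\[
\res_{E_1}(u)=0,\quad \res_{E_1}(v)=a_1, \qquad \res_{E_2}(u)=\res_{E_2}(v)=a_2 .
\]
The degree-two generator $w$ must restrict nontrivially on the central element. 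I expect it can be normalized so that $\res_{E_i}(w)=c_i^2+a_ic_i$, where $c_i$ is dual to $r^{n/2}$; this is the Euler class of the $2$-dimensional faithful representation. Pinning down this formula from \cite{dihedralcohomology} (or via Stiefel--Whitney classes of the standard representation) is the step I expect to be the main obstacle. Everything after it is bookkeeping. If the normalization differs by adding $a_i^2$ or similar terms, the argument below must be adjusted by replacing $w$ with $w+v^2$ or $w+uv$.

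\textbf{Step 2 ($\supseteq$).} With these formulas, $\res_{E_1}(uw)=0$ and $\res_{E_2}(uw)=a_2c_2(a_2+c_2)$, while $\res_{E_i}(vw)=a_ic_i(a_i+c_i)$ for $i=1,2$. All of these lie in $I(E_i,\mathbb F_2)$ by Theorem~\ref{negideal}, so $\langle uw,vw\rangle\subseteq N(D_{2n},\mathbb F_2)$.

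\textbf{Step 3 ($\subseteq$).} Using $u^2=uv$, the quotient $H^*(D_{2n},\mathbb F_2)/\langle uw,vw\rangle$ has, in degree $d$, an $\mathbb F_2$-basis among $v^d$, $uv^{d-1}$, and $w^{d/2}$ (the last only when $d$ is even). So a homogeneous class modulo $\langle uw,vw\rangle$ is $\alpha v^d+\beta uv^{d-1}+\gamma w^{d/2}$. By Proposition~\ref{elem ab 2-group vanishing}, it suffices to show that if this class restricts to a negligible element on both $E_1$ and $E_2$, then $\alpha=\beta=\gamma=0$. On $E_1$ the restriction is $\alpha a_1^d+\gamma(c_1^2+a_1c_1)^{d/2}$; evaluating at $(a_1,c_1)=(1,0)$ gives $\alpha$, and at $(0,1)$ gives $\gamma$, so both vanish. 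On $E_2$ the remaining term is $\beta a_2^d$, and evaluating at $(1,0)$ forces $\beta=0$. This proves injectivity of $\overline{\res}$ into $\bigoplus_i H^*(E_i,\mathbb F_2)/I(E_i,\mathbb F_2)$, and hence $N(D_{2n},\mathbb F_2)=\langle uw,vw\rangle$.
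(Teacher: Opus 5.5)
Your proposal is correct and is essentially the paper's proof. Both restrict to the two conjugacy classes of Klein four subgroups, check that $uw$ and $vw$ land in $I(E_i,\mathbb F_2)$, and prove injectivity of $\overline{\res}$ on the spanning set $u^d=uv^{d-1}$, $v^d$, $w^{d/2}$ by evaluating at points of $\mathbb F_2^2$. Your anticipated normalization $\res_{E_i}(w)=c_i(c_i+a_i)$ is exactly the paper's cited $x_1x_2+x_1^2$, and your degree-one restrictions are the ones compatible with the relation $u(u+v)=0$; the paper's printed $\res_{E_1}(u)=x_2$, $\res_{E_1}(v)=0$ would send $u(u+v)$ to $x_2^2\neq 0$, though this does not change the ideal $\langle uw,vw\rangle$, which is generated by $H^1(D_{2n},\mathbb F_2)\cdot w$.
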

\begin{proof}
Up to conjugacy, there are three elementary abelian $2$-subgroups of $D_{2n}$, two of which are maximal \cite{unijenaCohomologyFinite}.
Let $E_1,E_2$ denote the two maximal ones of rank 2. 
With respect to generators $u,v,w$ for $H^*(D_{2n},\mathbb F_2)$ and generators $x_1,x_2$ for $H^*(E_i,\mathbb F_2)$, \cite{unijenaCohomologyFinite} computes the restriction maps:
\begin{align*}
\res_{E_1}\colon &\quad u \mapsto x_2,\qquad v \mapsto 0,\qquad w \mapsto x_1x_2+x_1^2,\\
\res_{E_2}\colon &\quad u \mapsto 0,\qquad v \mapsto x_2,\qquad w \mapsto x_1x_2+x_1^2.
\end{align*}
By Theorem~\ref{negideal}, the products $uw$ and $vw$ lie in $N(G,\mathbb F_2)$.
We claim
\[
N(G,\mathbb F_2)=\langle uw, vw\rangle.
\]
In the quotient,
\[
H^*(D_{2n},\mathbb F_2)/\langle uw,vw\rangle
\simeq \mathbb F_2[u,v,w]/\langle u(u+v),uw,vw\rangle.
\]
It remains to show that no additional non-zero class is eventually negligible.
Let $y\in H^*(D_{2n},\mathbb F_2)/\langle uw,vw\rangle$ be a homogeneous element of degree $d>0$.
Indeed, $uw=vw=0$ kills all mixed terms involving $w$, while $u(u+v)=0$ gives $u^2=uv$, so every degree-$d$ monomial in $u$ and $v$ reduces to either $u^d$ or $v^d$.
Using these relations, every such $y$ can be written as
\[
y=\alpha_1 u^d+\alpha_2 v^d+\alpha_3 w^{d/2},
\qquad \alpha_i\in \mathbb F_2.
\]
with $\alpha_3=0$ if $d$ is odd. Under restriction to $E_1$ and $E_2$ this becomes
\begin{align*}
\res_{E_1}(y) &= \alpha_1 x_2^d+\alpha_3 (x_1x_2+x_1^2)^{d/2},\\
\res_{E_2}(y) &= \alpha_2 x_2^d+\alpha_3 (x_1x_2+x_1^2)^{d/2}.
\end{align*}
Assume for contradiction that $y$ is eventually negligible.  By Theorem~\ref{detect on elem ab}, its restriction to each elementary abelian $2$-subgroup is negligible.
By Proposition~\ref{elem ab 2-group vanishing}, for an elementary abelian $2$-group this is equivalent to vanishing upon evaluation at $(x_1,x_2)\in(\mathbb Z/2\mathbb Z)^2$.
Evaluate at $(x_1,x_2)=(0,1)$. Then $x_1x_2+x_1^2=0$, so
\[
\res_{E_1}(y)(0,1)=\alpha_1,\qquad \res_{E_2}(y)(0,1)=\alpha_2.
\]
Negligibility forces both to vanish, hence $\alpha_1=\alpha_2=0$.

Assume $d$ is even.
Evaluate at $(x_1,x_2)=(1,0)$.
Then $x_1x_2+x_1^2=1$, so
\[
\res_{E_1}(y)(1,0)=\res_{E_2}(y)(1,0)=\alpha_3.
\]
Thus, $\alpha_3=0$, and $y=0$ in $H^*(D_{2n},\mathbb F_2)/\langle uw,vw\rangle$.
\end{proof}

Now, we apply the long exact sequence technique to prove the negligible ideal of a dihedral 2-group $D_{2^n}$ is radical via induction on $n$.
In Proposition \ref{dihedral even n}, we prove the eventually negligible ideal is $N(D_{2^n}, \mathbb F_2) = \langle uw, vw\rangle$. 
Thus, it suffices to show that $uw + vw$ and $uw$ are negligible in $D_{2^n}$.

\begin{rem}
To illustrate the first example of this method concretely, we display the corresponding long exact sequence in Figure \ref{GysinD2n-1}.
In the proofs throughout the rest of the paper, we refer to the diagrams in the Appendix \ref{appendix}.
\end{rem}

\begin{prop}\label{prop:uw-and-uw+vw-negligible}
Let $D_{2^n}$ be the dihedral group of order $2^n$.
For all $n\geq 3$,
\begin{equation*}
    I(D_{2^n},\mathbb{F}_2)=\langle uw, vw\rangle.
\end{equation*}
\end{prop}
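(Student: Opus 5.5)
The plan is to reduce the statement to showing that $uw$ and $(u+v)w$ are negligible. We already have $I(D_{2^n},\mathbb F_2)\subseteq N(D_{2^n},\mathbb F_2)=\langle uw,vw\rangle$ by Proposition~\ref{dihedral even n}. So the equality follows once both generators are in $I$, and $vw=uw+(u+v)w$. I will produce each of $uw$ and $(u+v)w$ as a corestriction of a negligible class from an index~$2$ subgroup. Lemma~\ref{lemma:corestriction} then makes it negligible. The argument is by induction on $n$. The base case $n=3$ has the Klein four groups as the relevant subgroups, where $I$ is radical and is described by Theorem~\ref{negideal} and Proposition~\ref{elem ab 2-group vanishing}. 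In the inductive step the relevant subgroups are copies of $D_{2^{n-1}}$, for which $I=N=\langle u'w',v'w'\rangle$ by the induction hypothesis.

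First, I get degree-one corestrictions from Lemma~\ref{restriction les}. Take the character $x=u$, with kernel $H_u$. In degree one the image of $\mathrm{cor}^1\colon H^1(H_u,\mathbb F_2)\to H^1(D_{2^n},\mathbb F_2)$ equals $\ker(\cup u)$. Since $u(u+v)=0$, this kernel contains $u+v$, so $u+v=\mathrm{cor}(a)$ for some $a\in H^1(H_u,\mathbb F_2)$. Proposition~\ref{corestriction identity} then gives
\[
(u+v)w=\mathrm{cor}(a)\,w=\mathrm{cor}\bigl(a\cdot \mathrm{res}_{H_u}(w)\bigr).
\]
Symmetrically, take $x=u+v$ with kernel $H_{u+v}$. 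The same relation gives $u\in\ker(\cup(u+v))=\mathrm{im}(\mathrm{cor}^1)$, so $u=\mathrm{cor}(a')$ and $uw=\mathrm{cor}(a'\cdot\mathrm{res}_{H_{u+v}}(w))$.

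It remains to show that $a\cdot\mathrm{res}_{H_u}(w)$ and $a'\cdot\mathrm{res}_{H_{u+v}}(w)$ are negligible in their subgroups. These classes have degree $3$. By the induction hypothesis (or radicality in the base case), it suffices to show they are eventually negligible. By Corollary~\ref{quillen-venkov}, this amounts to checking that their restrictions to the maximal elementary abelian subgroups of $H_u$ and $H_{u+v}$ vanish at all $\mathbb F_2$-points. Since $u$ restricts to $0$ on $E_2$, we have $E_2\subseteq H_u$, and similarly for $H_{u+v}$. These kernels are therefore the dihedral index~$2$ subgroups, or $V_4$ when $n=3$, rather than the cyclic one. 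The restriction of $w$ to each $E_i$ is $x_1x_2+x_1^2$, which vanishes at every $\mathbb F_2$-point with $x_1=0$. So I need $a$ to restrict to a multiple of $x_1$ there, or at least to vanish wherever $x_1^2+x_1x_2\neq0$. Alternatively, I would identify $a$ explicitly in terms of the generators $u',v'$ of $H^1(D_{2^{n-1}},\mathbb F_2)$ and $\mathrm{res}(w)$ in terms of $w'$, so that $a\cdot\mathrm{res}(w)\in\langle u'w',v'w'\rangle$ directly.

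I expect the main obstacle to be this identification. It requires the explicit restriction maps $H^*(D_{2^n})\to H^*(D_{2^{n-1}})$ in degrees $1$ and $2$, in particular $\mathrm{res}(w)$, and a correct choice of the preimage $a$, which is only determined up to $\mathrm{res}(H^1(D_{2^n},\mathbb F_2))$. I would first compute these maps on the subgroup generated by a reflection and the square of the rotation, reading them off via the Gysin diagram in the Appendix. If the naive choice of $a$ fails, I would modify $a$ by $\mathrm{res}(u)$ or $\mathrm{res}(v)$. By Proposition~\ref{corestriction identity} this changes the corestriction only by classes such as $\mathrm{cor}(1)\,u\,w$, and $\mathrm{cor}(1)=0$ for an index~$2$ subgroup in degree $0$ with $\mathbb F_2$ coefficients. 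So the adjustment is harmless, and it gives the freedom needed to land in $\langle u'w',v'w'\rangle$.
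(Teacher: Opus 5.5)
Your reduction to showing that $uw$ and $(u+v)w$ are negligible is correct. So is the identity $(u+v)w=\mathrm{cor}(a\cdot\mathrm{res}_{H_u}(w))$, which you get from degree-one exactness and Proposition~\ref{corestriction identity}. The gap is the base case: it cannot be done this way.

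For $n=3$, $H_u=\langle r^2,s\rangle$ is a Klein four group and $\ker(\mathrm{cor}^1)=\mathrm{im}(\mathrm{res}^1)=\{0,\mathrm{res}(v)\}$. Since $\mathrm{res}(v)$ vanishes on the central involution $z=r^2$, every $a$ with $\mathrm{cor}(a)=u+v$ has $a(z)=1$. Restrict further to $C=\langle z\rangle$, where $u,v\mapsto 0$ and $w\mapsto t^2$. Then $a\cdot\mathrm{res}(w)\mapsto t^3\neq 0$, while $I(C,\mathbb F_2)=0$, so $a\cdot\mathrm{res}(w)$ is not negligible for either choice of $a$.

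In fact no negligible class of $H^3(H_u,\mathbb F_2)$ corestricts to $(u+v)w$ at all. Any other preimage differs from $a\cdot\mathrm{res}(w)$ by an element of $\mathrm{res}(H^3(D_8,\mathbb F_2))$. Every such element restricts to $0$ on $C$, because every degree-$3$ monomial in $u,v,w$ contains $u$ or $v$. The same obstruction occurs for $uw$ and $H_{u+v}$.

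The freedom to adjust $a$ that you invoke does not help. We have $\mathrm{res}(u)=0$, and replacing $a$ by $a+\mathrm{res}(v)$ adds $\mathrm{res}(vw)$, which is already eventually negligible. So no adjustment can change whether $a\cdot\mathrm{res}(w)$ is (eventually) negligible. You therefore need outside input at $n=3$. The paper uses that $D_8\cong W(B_2)$ has radical negligible ideal (Theorem~\ref{Weylneg}), together with $N(D_8,\mathbb F_2)=\langle uw,vw\rangle$ from Proposition~\ref{dihedral even n}, or alternatively the Baek--Merkurjev computation.

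In the inductive step your plan does go through, but the decisive observation is missing. The condition you state first is also the wrong one: $a$ need not be a multiple of $x_1$; it must vanish at the point $x_1=1,x_2=0$, i.e.\ at $z$. Here is the missing observation. For $n\geq 4$ the central involution $z=(r^2)^{2^{n-3}}$ is a square in $H_u\cong D_{2^{n-1}}$, so every $a\in H^1(H_u,\mathbb F_2)$ vanishes on it. Meanwhile $\mathrm{res}(w)$ restricts to $x_1(x_1+x_2)$ on each maximal elementary abelian subgroup $\langle z,\sigma\rangle\subseteq H_u$, and this is nonzero only at $z$. Hence $a\cdot\mathrm{res}(w)$ is eventually negligible for every $a$. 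It is then negligible because $I(D_{2^{n-1}},\mathbb F_2)=N(D_{2^{n-1}},\mathbb F_2)$ by induction.

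Alternatively, checking on elementary abelian subgroups gives $\mathrm{res}_{H_u}(w)=w'$. Then $a w'\in\langle u'w',v'w'\rangle$ for all $a$, so no identification of $a$ is needed.

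With this supplied, your projection-formula argument is a clean alternative to the paper's inductive step. The paper instead computes $\ker(\cup u)$ on $H^3$ and identifies $uw+vw$ as the only nonzero eventually negligible class there.
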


\begin{proof}
We argue by induction on $n$. For the base case $n=3$, Proposition \ref{dihedral even n} gives
$N(D_8,\mathbb F_2)=\langle uw,vw\rangle.$
Since Theorem \ref{Weylneg} shows that $I(D_8,\mathbb F_2)$ is radical, it follows that $\{uw,\;vw\}\subset I(D_8,\mathbb F_2).$
Alternatively, \cite[Theorem 6.1]{BaekMerkurjev} proves the result.

Now let $n\geq 4$, and suppose inductively that
$u'w'$ and $u'w'+v'w'$
are negligible in $H^*(D_{2^{n-1}},\mathbb F_2)$.

We first prove that $uw+vw$ is negligible in $H^*(D_{2^n},\mathbb F_2)$.
Consider the index-two subgroup $D_{2^{n-1}}\subset D_{2^n}$ generated by $r^2$ and $s$, and choose
the generators $u,v\in H^1(D_{2^n},\mathbb F_2)=\mathrm{Hom}(D_{2^n},\mathbb F_2)$ by
\[
u(r)=\overline{1},\qquad u(s)=\overline{0},\qquad
v(r)=\overline{0},\qquad v(s)=\overline{1}.
\]
Let $u',v'\in H^1(D_{2^{n-1}},\mathbb F_2)$ be defined analogously on the subgroup generated by $r^2$ and $s$.
Then
\[
\mathrm{res}(u)(r^2)=u(r^2)=u(r+r)=\overline{0},\qquad
\mathrm{res}(u)(s)=u(s)=\overline{0},
\]
so $\mathrm{res}(u)=0$, while
\[
\mathrm{res}(v)(r^2)=v(r^2)=v(r+r)=\overline{0},\qquad
\mathrm{res}(v)(s)=v(s)=\overline{1},
\]
so $\mathrm{res}(v)=v'$. Thus the degree-$1$ restriction map is
\[
\mathrm{res}(u)=0,\qquad \mathrm{res}(v)=v'.
\]
Using this restriction map and the connecting homomorphism given by cup product with $u$, one obtains the long exact sequence displayed in Figure \ref{GysinD2n-1}.
Since restriction is a ring homomorphism, the degree-one computation determines the higher restriction maps appearing in Figure \ref{GysinD2n-1}.

By Figure \ref{GysinD2n-1}, the kernel of
\[
H^3(D_{2^n},\mathbb F_2)\xrightarrow{\cup u} H^4(D_{2^n},\mathbb F_2)
\]
is generated by $\{uw+vw,\;u^3+v^3\}.$
By exactness, this kernel is equal to the image of
\[
\mathrm{cor}^3:H^3(D_{2^{n-1}},\mathbb F_2)\to H^3(D_{2^n},\mathbb F_2).
\]
 Since $u'w'$ is negligible in $H^*(D_{2^{n-1}},\mathbb F_2)$, Lemma
\ref{lemma:corestriction} implies that the non-zero $\mathrm{cor}^3(u'w')$ is negligible in $H^*(D_{2^n},\mathbb F_2)$.
On the other hand, $u^3+v^3$ and $uw+vw+u^3+v^3$ are not eventually negligible, so $\mathrm{cor}^3(u'w')=uw+vw.$
It follows that $uw+vw$ is negligible in $H^*(D_{2^n},\mathbb F_2)$.

We now prove that $uw$ is negligible.
For this, we use the second description of the degree-one generators,
\[
u(r)=\overline{1},\qquad u(s)=\overline{1},\qquad
v(r)=\overline{0},\qquad v(s)=\overline{1}.
\]
Again, let $u',v'$ denote the corresponding generators of
$H^1(D_{2^{n-1}},\mathbb F_2)$.
Then
\[
\mathrm{res}(u)(r^2)=u(r^2)=u(r+r)=\overline{0},\qquad
\mathrm{res}(u)(s)=u(s)=\overline{1},
\]
so $\mathrm{res}(u)=v'$, and similarly
\[
\mathrm{res}(v)(r^2)=v(r^2)=v(r+r)=\overline{0},\qquad
\mathrm{res}(v)(s)=v(s)=\overline{1},
\]
so $\mathrm{res}(v)=v'$. Consequently,
\[
\mathrm{res}(u+v)=\mathrm{res}(u)+\mathrm{res}(v)=v'+v'=0.
\]
Thus, the degree-one restriction map for this second choice is
\[
\mathrm{res}(u)=v',\qquad \mathrm{res}(v)=v',\qquad \mathrm{res}(u+v)=0.
\]
Using this restriction map and the connecting homomorphism given by cup product with $u+v$, one obtains the long exact sequence in Figure \ref{GysinD2^n(uplusv)}.

Since the relation in the cohomology ring is $u(u+v)=0,$ we have $u^2=uv.$
By Figure \ref{GysinD2^n(uplusv)}, the kernel of
\[
H^3(D_{2^n},\mathbb F_2)\xrightarrow{\cup (u+v)} H^4(D_{2^n},\mathbb F_2)
\]
is generated by $\{uw,\;u^3\}.$
By exactness, this kernel is equal to the image of
\[
\mathrm{cor}^3:H^3(D_{2^{n-1}},\mathbb F_2)\to H^3(D_{2^n},\mathbb F_2).
\]
By the induction hypothesis, both $u'w'$ and $v'w'$ are negligible in $H^*(D_{2^{n-1}},\mathbb F_2)$, so $\mathrm{cor}^3(u'w')$ and $\mathrm{cor}^3(v'w')$ are negligible in $H^*(D_{2^n},\mathbb F_2)$ by Lemma \ref{lemma:corestriction}.
Moreover, by Proposition \ref{corestriction identity},
$$\mathrm{cor}\big((u')^3\big)=\mathrm{cor}\big(u'(v')^2\big)=\mathrm{cor}(u')u^2=u^3.$$
Hence $u^3$ is the image of $(u')^3$ under corestriction.
Since $u^3$ and $u^3+uw$ are not eventually negligible, the other generator of $\mathrm{Im}(\mathrm{cor}^3)$, namely $uw$, must be the corestriction of a negligible class.
Therefore, $uw$ is negligible in $H^*(D_{2^n},\mathbb F_2)$, which completes the induction.
\end{proof}

\begin{cor}\label{dihedral 2 is radical}
The negligible ideal of $H^*(D_{2^n},\mathbb{F}_2)$ is radical. 
\end{cor}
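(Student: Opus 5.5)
The corollary follows by combining the two preceding computations, so the plan is to compare the negligible ideal with the eventually negligible ideal directly. By definition $I(D_{2^n},\mathbb F_2)\subseteq N(D_{2^n},\mathbb F_2)=\sqrt{I(D_{2^n},\mathbb F_2)}$. Radicality therefore amounts to the reverse inclusion $N(D_{2^n},\mathbb F_2)\subseteq I(D_{2^n},\mathbb F_2)$.

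First I will treat the case $n\geq 3$, where $D_{2^n}$ has order $2^n$ with $2^{n-1}\equiv 0 \pmod 4$. Proposition \ref{dihedral even n}, applied with $2^{n-1}$ in place of $n$, identifies the eventually negligible ideal as
\[
N(D_{2^n},\mathbb F_2)=\langle uw,vw\rangle .
\]
Proposition \ref{prop:uw-and-uw+vw-negligible} shows that $I(D_{2^n},\mathbb F_2)=\langle uw,vw\rangle$. Equivalently, both generators $uw$ and $vw$ (obtained from $uw$ and $uw+vw$) are negligible. Since $I$ is an ideal, it contains $\langle uw,vw\rangle=N$, and hence $I=N$.

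The only care needed is to use the same generators $u,v,w$ in both statements. In Proposition \ref{prop:uw-and-uw+vw-negligible} two different choices of degree-one generators were used. The ideal $\langle uw,vw\rangle=\langle uw,(u+v)w\rangle$ does not depend on this choice, provided $w$ is fixed (or changes only by terms that preserve the ideal). I will check this point, since it is the one place the argument could slip.

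Finally, I will dispose of the small cases. For $n=1$ the group is $\Z/2\Z$, and for $n=2$ it is $(\Z/2\Z)^2$, both elementary abelian. There the negligible ideal is radical by Proposition \ref{elem ab 2-group vanishing} together with the finite-field Nullstellensatz remark following Corollary \ref{quillen-venkov}. Alternatively, Proposition \ref{dihedralm2} covers $D_4$, where $2\equiv 2 \pmod 4$.
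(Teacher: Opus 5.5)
Your proposal is correct and matches the paper's argument: the corollary is read off by combining $N(D_{2^n},\mathbb F_2)=\langle uw,vw\rangle$ (Proposition~\ref{dihedral even n} with $2^{n-1}$ in place of $n$) with $I(D_{2^n},\mathbb F_2)=\langle uw,vw\rangle$ (Proposition~\ref{prop:uw-and-uw+vw-negligible}), and your treatment of the elementary abelian cases $n\le 2$ is a harmless addition. Your generator worry is resolved as you suggest, since $\langle uw,vw\rangle$ is the ideal generated by $w\cdot H^1(D_{2^n},\mathbb F_2)$ and so does not depend on the basis of $H^1$; however, the parenthetical ``obtained from $uw$ and $uw+vw$'' should not be used as a justification. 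In the proof of Proposition~\ref{prop:uw-and-uw+vw-negligible}, the second basis is $u\mapsto u+v$ with the same kernel $\langle r^2,s\rangle$, so both Gysin steps give the single class $(u+v)w$, and the other generator has to be supplied separately, e.g.\ via the subgroup $\langle r^2,rs\rangle$ or the automorphism $r\mapsto r$, $s\mapsto rs$. Rely on the proposition's statement, as your main argument already does.
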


We extend our result to all dihedral groups, $D_{2n}$.
Propositions \ref{dihedralmodd} and \ref{dihedralm2} handle the cases when $n$ is odd and $n \equiv 2\text{ }(\text{mod}\text{ }4)$. 

\begin{prop}\label{dihedralm0}
    For $n \equiv 0$ (mod 4),
    \begin{equation*}
        I(D_{2n},\mathbb{F}_2)=\langle uw, vw\rangle.
    \end{equation*}
    The negligible ideal of $D_{2n}$ is radical.
\end{prop}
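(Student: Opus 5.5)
Write $n=2^k m$ with $m$ odd and $k\geq 2$. The plan is to reduce to the dihedral $2$-group case via a Sylow subgroup of odd index. The Sylow $2$-subgroup $P$ of $D_{2n}$ is the dihedral group $D_{2^{k+1}}$ of order $2^{k+1}\geq 8$. It is generated by $r^m$ and $s$, where $r$ is the rotation and $s$ a reflection, and $[D_{2n}:P]=m$ is odd. By Lemma~\ref{intersection lemma}, restriction $H^*(D_{2n},\mathbb F_2)\to H^*(P,\mathbb F_2)$ is injective and
\[
I(D_{2n},\mathbb F_2)=H^*(D_{2n},\mathbb F_2)\cap I(P,\mathbb F_2).
\]
By Proposition~\ref{prop:uw-and-uw+vw-negligible}, $I(P,\mathbb F_2)=\langle u'w',v'w'\rangle$, and by Corollary~\ref{dihedral 2 is radical} this ideal is radical.

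Radicality of $I(D_{2n},\mathbb F_2)$ then follows formally. If $x^k\in I(D_{2n},\mathbb F_2)$, then $\res(x)^k\in I(P,\mathbb F_2)$. Radicality in $P$ gives $\res(x)\in I(P,\mathbb F_2)$, and Lemma~\ref{intersection lemma} gives $x\in I(D_{2n},\mathbb F_2)$. The preimage of a radical ideal under a ring map is radical, so this step is immediate.

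To identify the ideal explicitly, use $I\subseteq N$ together with Proposition~\ref{dihedral even n}, which gives $I(D_{2n},\mathbb F_2)\subseteq N(D_{2n},\mathbb F_2)=\langle uw,vw\rangle$. For the reverse inclusion, radicality gives $I=N$ directly: $N=\sqrt{I}=I$. So the identification $I=\langle uw,vw\rangle$ is also formal, and no computation with the restriction map is strictly needed.

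As a sanity check, I would also verify that $\res(u),\res(v),\res(w)$ match $u',v',w'$ up to relabeling, so that $\res(uw)$ and $\res(vw)$ visibly lie in $\langle u'w',v'w'\rangle$. For $u$ and $v$ this follows from evaluating on $r^m$ and $s$, using that $m$ is odd. For $w$ it should follow from compatibility of the presentations in \cite{dihedralcohomology}. This check is the only mildly delicate point, but the argument above does not depend on it.
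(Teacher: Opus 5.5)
Your proof is correct and follows the same route as the paper: pass to the odd-index Sylow subgroup $D_{2^{k+1}}$, apply Lemma~\ref{intersection lemma}, and import the dihedral $2$-group result of Proposition~\ref{prop:uw-and-uw+vw-negligible}. The only difference is the order of deductions. You get radicality first, as the preimage under restriction of the radical ideal $I(P,\mathbb F_2)$, and then read off $I=N=\langle uw,vw\rangle$ from Proposition~\ref{dihedral even n}; this avoids the paper's tacit assumption, when it passes from $I(D_{2^k},\mathbb F_2)=\langle uw,vw\rangle$ to $I(D_{2n},\mathbb F_2)=\langle uw,vw\rangle$, that restriction carries $u,v,w$ to $u',v',w'$.
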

\begin{proof}
Write $2n=2^k\ell$, where $k\geq3$ and $\ell$ is odd. Then a Sylow $2$-subgroup of $D_{2n}$ is isomorphic to $D_{2^k}$. Since this subgroup has odd index $\ell$, Lemma \ref{intersection lemma} identifies the negligible ideal of $D_{2n}$ with the intersection of $H^*(D_{2n},\mathbb F_2)$ and the negligible ideal of $D_{2^k}$ under restriction. Proposition \ref{prop:uw-and-uw+vw-negligible} gives $I(D_{2^k},\mathbb F_2)=\langle uw,vw\rangle$, so $I(D_{2n},\mathbb F_2)=\langle uw,vw\rangle$. Thus the negligible ideal of $D_{2n}$ is radical.
\end{proof}

The results of Propositions \ref{dihedralmodd}, \ref{dihedralm2}, and \ref{dihedralm0} prove that the negligible cohomology ideal of $D_{2n}$ is radical.

\begin{figure}[htbp]
    \centering
\begin{center}
    \GLESfigure{%
      \begin{circuitikz}
         \tikzstyle{every node}=[font=\large]
\node [font=\Large] at (-1,16.5) {$H^1(D_{2^n}, \mathbb F_2)$};

\draw [->, >=Stealth] (1,16.5) -- (2.75,16.5);
\node [font=\Large] at (5,16.5) {$H^1(D_{2^{n-1}}, \mathbb F_2)$};
\draw [->, >=Stealth] (7,16.5) -- (8.25,16.5);
\node [font=\Large] at (10.25,16.5) {$H^1(D_{2^n}, \mathbb F_2)$};
\draw [->, >=Stealth] (12.5,16.5) .. controls (22.75,11) and (-10.25,15) .. (-3,9) ;
\node [font=\Large] at (-1,9) {$H^2(D_{2^n}, \mathbb F_2)$};
\node [font=\Large] at (5,9) {$H^2(D_{2^{n-1}}, \mathbb F_2)$};
\node [font=\Large] at (10.25,9) {$H^2(D_{2^n}, \mathbb F_2)$};
\node [font=\LARGE] at (3.25,16) {};
\draw [->, >=Stealth] (1,9) -- (2.75,9);
\draw [->, >=Stealth] (7,9) -- (8.25,9);
\draw [->, >=Stealth] (12.5,8.75) .. controls (22.75,3.25) and (-10.25,7.25) .. (-3,1.25) ;
\node [font=\Large] at (-1,1.25) {$H^3(D_{2^n}, \mathbb F_2)$};
\node [font=\Large] at (5,1.25) {$H^3(D_{2^{n-1}}, \mathbb F_2)$};
\node [font=\Large] at (10.25,1.25) {$H^3(D_{2^n}, \mathbb F_2)$};
\draw [->, >=Stealth] (1,1.25) -- (2.5,1.25);
\draw [->, >=Stealth] (7,1.25) -- (8.25,1.25);
\draw [->, >=Stealth] (12.5,1) .. controls (22.75,-5) and (-10.25,-1) .. (-3,-7.25) ;
\node [font=\Large] at (-1,-7.25) {$H^4(D_{2^n}, \mathbb F_2)$};
\node [font=\Large] at (5,-7.25) {$H^4(D_{2^{n-1}}, \mathbb F_2)$};
\node [font=\Large] at (10.25,-7.25) {$H^4(D_{2^n}, \mathbb F_2)$};
\draw [->, >=Stealth] (1,-7.25) -- (2.75,-7.25);
\draw [->, >=Stealth] (7,-7.25) -- (8.25,-7.25);
\node [font=\normalsize] at (2,17) {$\mathrm{res}^1$};
\node [font=\normalsize] at (7.5,17) {$\mathrm{cor}^1$};
\node [font=\normalsize] at (2,9.5) {$\mathrm{res}^2$};
\node [font=\normalsize] at (7.75,9.5) {$\mathrm{cor}^2$};
\node [font=\normalsize] at (2,1.75) {$\mathrm{res}^3$};
\node [font=\normalsize] at (7.5,1.75) {$\mathrm{cor}^3$};
\node [font=\normalsize] at (2,-6.75) {$\mathrm{res}^4$};
\node [font=\normalsize] at (7.5,-6.75) {$\mathrm{cor}^4$};
\node [font=\large] at (0.25,15.5) {$v$};
\node [font=\large] at (4.75,15.5) {$v'$};
\node [font=\LARGE] at (3.25,16) {};
\node [font=\LARGE] at (3.25,16) {};
\draw [->, >=Stealth] (0.75,15.5) -- (4.25,15.5);
\node [font=\large] at (0.25,15) {$u$};
\node [font=\large] at (4.75,15) {$0$};
\draw [->, >=Stealth] (0.75,15) -- (4.25,15);
\draw [->, >=Stealth] (5.25,15.5) -- (9.75,15.5);
\node [font=\large] at (10.45,15) {$u+v$};
\node [font=\large] at (10.25,15.5) {$0$};
\node [font=\large] at (6,15) {$u'$};
\draw [->, >=Stealth] (6.75,15) -- (9.75,15);
\node [font=\LARGE] at (3,8.75) {};
\node [font=\large] at (0,8.25) {$u^2$};
\node [font=\large] at (4.75,8.25) {$0$};
\node [font=\LARGE] at (3,8.75) {};
\node [font=\LARGE] at (3.25,8.75) {};
\draw [->, >=Stealth] (0.75,8.25) -- (4.25,8.25);
\node [font=\large] at (0,7.75) {$v^2$};
\node [font=\large] at (4.75,7.75) {$(v')^2$};
\draw [->, >=Stealth] (0.75,7.75) -- (4.25,7.75);
\draw [->, >=Stealth] (5.25,7.75) -- (9.75,7.75);
\node [font=\large] at (10,8.25) {$?$};
\node [font=\large] at (10,7.75) {$0$};
\node [font=\large] at (5.5,8.25) {$w'$};
\draw [->, >=Stealth] (5.75,8.25) -- (9.25,8.25);
\node [font=\large] at (-0.5,7.25) {$w$};
\node [font=\large] at (4.5,7.25) {$\mathrm{res}(w)$};
\draw [->, >=Stealth] (0,7.25) -- (3.5,7.25);
\node [font=\large] at (10,7.25) {$\ast$};
\node [font=\large] at (6,7.25) {$(u')^2$};
\draw [->, >=Stealth] (6.5,7.25) -- (9.5,7.25);
\node [font=\large] at (-0,0.5) {$u^3$};
\node [font=\large] at (5,0.5) {$0$};
\draw [->, >=Stealth] (0.5,0.5) -- (4.75,0.5);
\node [font=\large] at (-0,0) {$v^3$};
\node [font=\large] at (5.25,0) {$(v')^3$};
\draw [->, >=Stealth] (0.5,0) -- (4.75,0);
\draw [->, >=Stealth] (5.75,0) -- (10.25,0);
\node [font=\large] at (10.5,0.5) {$\ast$};
\node [font=\large] at (10.5,0) {$0$};
\node [font=\large] at (6,0.5) {$(u')^3$};
\draw [->, >=Stealth] (6.25,0.5) -- (9.75,0.5);
\node [font=\large] at (0,-0.5) {$wv$};
\node [font=\large] at (5,-0.5) {$v'\mathrm{res}(w)$};
\draw [->, >=Stealth] (0.5,-0.5) -- (4,-0.5);
\draw [->, >=Stealth] (6,-0.5) -- (10.25,-0.5);
\node [font=\large] at (0,-1) {$wu$};
\draw [->, >=Stealth] (0.5,-1) -- (4,-1);
\node [font=\large] at (4.25,-1) {$0$};
\node [font=\large] at (10.5,-0.5) {0};
\draw [->, >=Stealth] (6,-1) -- (10.25,-1);
\node [font=\large] at (10.5,-1) {0};
\node [font=\large] at (5.5,-1.5) {$u'w'$};
\node [font=\large] at (5.5,-1) {$v'w'$};
\draw [->, >=Stealth] (6,-1.5) -- (10.25,-1.5);
\node [font=\large] at (11.25,-1.5) {$uw+vw$};
\node [font=\large] at (-3.5,-8) {$u^3$};
\draw [->, >=Stealth] (-3,-8) -- (0,-8);
\node [font=\large] at (-3.5,-8.5) {$uw$};
\draw [->, >=Stealth] (-3,-8.5) -- (0,-8.5);
\node [font=\large] at (-3.75,-9) {$u^3+v^3$};
\draw [->, >=Stealth] (-3,-9) -- (0,-9);
\node [font=\large] at (0.25,-8) {$u^4$};
\node [font=\large] at (0.5,-8.5) {$u^2w$};
\node [font=\large] at (0.5,-9) {$0$};

\node [font=\large] at (-3.75,-9.5) {$v^3$};
\draw [->, >=Stealth] (-3,-9.5) -- (0,-9.5);
\node [font=\large] at (0.5,-9.5) {$u^4$};
\node [font=\large] at (-3.5,-10) {$vw$};
\draw [->, >=Stealth] (-3,-10) -- (0,-10);
\node [font=\large] at (0.5,-10) {$u^2w$};
\node [font=\large] at (-4,-10.5) {$uw+vw$};
\draw [->, >=Stealth] (-3,-10.5) -- (0,-10.5);
\node [font=\large] at (0.5,-10.5) {$0$};
\node [font=\large] at (7.25,12.75) {$\cup u$};
\node [font=\large] at (6.5,4.75) {$\cup u$};
\node [font=\large] at (6.5,-3.25) {$\cup u$};
      \end{circuitikz}
    }%
  \end{center}
    \caption{GLES $vw+uw$ is negligible in $D_{2^n}$.}
    \label{GysinD2n-1}
\end{figure}

\subsection{Negligible ideal of the mod-2 cohomology of cyclic \texorpdfstring{$2$}{2}-groups}\label{computation-cyclic-2-groups}

In this section, we apply our computations of the negligible elements in $D_{2^n}$ (Section \ref{dihedral}), to derive the negligible elements of cyclic 2-groups via the Gysin long exact sequence.
Specifically, we leverage the fact that $D_{2^{n + 1}}$ contains a cyclic subgroup of order $2^n$.

Recall the cohomology ring of $\Z / 2^n \Z, n \geq 2$ can be expressed as 
\begin{equation*}
    H^*(\Z / 2^n \Z, \mathbb F_2) \cong \mathbb F_2 [x, y] / \langle x^2 \rangle,
\end{equation*}
with $\deg x = 1$ and $\deg y = 2$. We first compute the eventually negligible ideal.

\begin{prop}\label{cyclic-2-eventually-negligible}
For $n\geq 2$, $N(\Z/2^n\Z,\mathbb F_2)=\langle x\rangle$.
\end{prop}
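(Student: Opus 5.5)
We will apply Corollary \ref{quillen-venkov} to $G=\Z/2^n\Z$ with $n\geq 2$. A cyclic $2$-group has exactly one nontrivial elementary abelian $2$-subgroup, namely the unique subgroup $E\cong\Z/2\Z$ of order $2$. The trivial subgroup imposes no condition in positive degree. So a class is eventually negligible exactly when its restriction to $E$ is negligible. By Theorem \ref{negideal} (or \cite[Lemma 26.4]{SerreBook}), $I(\Z/2\Z,\mathbb F_2)=0$. Hence $N(\Z/2^n\Z,\mathbb F_2)$ is precisely the kernel of $\res_E\colon H^*(\Z/2^n\Z,\mathbb F_2)\to H^*(E,\mathbb F_2)=\mathbb F_2[t]$ with $\deg t=1$.

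The main task is to compute this restriction on the generators $x$ and $y$.

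\textbf{Degree one.} We have $H^1=\mathrm{Hom}(\Z/2^n\Z,\mathbb F_2)$, and $x$ is the nonzero homomorphism. The generator of $E$ is $2^{n-1}$, which is even since $n\geq 2$. So $x$ kills $E$ and $\res_E(x)=0$.

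\textbf{Degree two.} The class $y$ is the generator of $H^2(\Z/2^n\Z,\mathbb F_2)\cong\mathbb F_2$. It can be described as the reduction of the integral Bockstein/Euler class of a faithful character $\Z/2^n\Z\to\mathbb C^\times$, or equivalently as the class of the central extension $0\to\Z/2\to\Z/2^{n+1}\to\Z/2^n\to 0$. Restricting the faithful character to $E$ gives the sign character of $E$, which is still faithful. Its mod-$2$ Euler class is $t^2\neq 0$. Alternatively, the pullback of the extension to $E$ is $\Z/4\to\Z/2$, which is nonsplit with class $t^2$. Either way, $\res_E(y)=t^2$.

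I expect this degree-two identification to be the only real obstacle. It depends on choosing a concrete model for $y$. If needed, I will instead argue via the Gysin sequence (Lemma \ref{restriction les}) along the chain of index-$2$ subgroups. At each step the restriction $H^2(\Z/2^k)\to H^2(\Z/2^{k-1})$ sends $y$ to $y$. This is because the degree-one generator restricts to $0$, so in the sequence $H^1\xrightarrow{\cup x}H^2\xrightarrow{\res}H^2$ the cup product $x\cdot x=x^2=0$. Hence $\cup x$ is zero on $H^1$ and restriction is injective on $H^2\cong\mathbb F_2$. Iterating down to $\Z/4\to\Z/2$, the same argument (or a direct check) gives $y\mapsto t^2$.

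With these values, $\res_E(x^\epsilon y^k)=0$ when $\epsilon=1$, and $\res_E(y^k)=t^{2k}\neq 0$. Every element of $H^*(\Z/2^n\Z,\mathbb F_2)$ can be written uniquely as $p(y)+x\,q(y)$. Its restriction is $p(t^2)$, and this is zero if and only if $p=0$. Therefore $\ker\res_E=\langle x\rangle$, which gives $N(\Z/2^n\Z,\mathbb F_2)=\langle x\rangle$.
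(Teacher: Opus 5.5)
Your proposal is correct and follows essentially the same route as the paper. You restrict to the unique subgroup $E\cong\Z/2\Z$, use $I(E,\mathbb F_2)=0$ to identify $N(\Z/2^n\Z,\mathbb F_2)$ with $\ker(\res_E)$, and compute $\res_E(x)=0$ and $\res_E(y)=t^2$; you also justify $\res_E(y)=t^2$ (via the nonsplit pullback extension $\Z/4\Z\to\Z/2\Z$, or the Gysin sequence with $x^2=0$), which the paper simply asserts.
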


\begin{proof}
The unique elementary abelian $2$-subgroup of $\Z/2^n\Z$ is $E\simeq \Z/2\Z$, with $H^*(E,\mathbb F_2)\cong\mathbb F_2[t]$ where $\deg t=1$.
The restriction map on generators is $$\res(x)=0,\qquad \res(y)=t^2.$$
Since $H^*(E,\mathbb F_2)$ is a polynomial ring on a single degree-one generator, Theorem~\ref{negideal} gives $I(E,\mathbb F_2)=0$, so no non-zero homogeneous class of $H^*(E,\mathbb F_2)$ is negligible. By Theorem~\ref{detect on elem ab}, a class of $H^*(\Z/2^n\Z,\mathbb F_2)$ is eventually negligible if and only if it restricts to zero on $E$. Since $\res(x)=0$ while $\res(y^r)=t^{2r}\ne 0$ for $r\geq 1$, the kernel of $\res$ is exactly $\langle x\rangle$, hence $N(\Z/2^n\Z,\mathbb F_2)=\langle x\rangle$.
\end{proof}

\begin{prop}\label{cyclic-2-xy-negligible}
The class $xy$ is negligible in $H^*(\Z/{2^n}\Z,\F_2)$. 
\end{prop}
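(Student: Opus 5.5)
Following the announcement at the start of this subsection, I will realize $\Z/2^n\Z$ as the index-two rotation subgroup $C=\langle r\rangle$ of $D_{2^{n+1}}$. I will then pull the known negligible classes of $D_{2^{n+1}}$ (Proposition \ref{prop:uw-and-uw+vw-negligible}) down to $C$ by restriction, using Lemma \ref{half conjecture}. This is preferable to a Gysin/corestriction argument, since restriction preserves negligibility directly and no diagram chase is needed.

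The first step is to compute the restriction in degree one. With the first choice of generators, $u(r)=\overline1$, $u(s)=\overline0$, $v(r)=\overline0$, $v(s)=\overline1$, we get $\res(u)=x$ and $\res(v)=0$, where $x$ is the generator of $H^1(C,\F_2)=\mathrm{Hom}(C,\F_2)$ with $x(r)=\overline1$. The subgroup $C$ is the kernel of $v$, so Lemma \ref{restriction les} applies with the character $v$.

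The second step, which I expect to be the main obstacle, is to determine $\res(w)\in H^2(C,\F_2)=\F_2\{y\}$ and show it equals $y$. The reason this matters is that $x^2=0$, so $\res(u^2)=\res(uv)=\res(v^2)=0$, and hence $\res$ in degree two is nonzero only if $\res(w)\neq 0$. I would argue via exactness of
\[
H^1(D_{2^{n+1}},\F_2)\xrightarrow{\cup v}H^2(D_{2^{n+1}},\F_2)\xrightarrow{\res}H^2(C,\F_2).
\]
The image of $\cup v$ is spanned by $uv$ and $v^2$. The class $w$ is not in this span, because $H^2(D_{2^{n+1}},\F_2)$ has basis $u^2=uv$, $v^2$, $w$. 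Therefore $\res(w)\ne0$, which forces $\res(w)=y$. As a fallback, I could cite the explicit restriction maps of \cite{unijenaCohomologyFinite}, or use the fact that the rank-one elementary abelian subgroup $\langle r^{2^{n-1}}\rangle$ lies in both $E_1$ and $E_2$, on which $w$ restricts to a square.

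The last step is to conclude. The class $uw$ is negligible in $H^3(D_{2^{n+1}},\F_2)$ by Proposition \ref{prop:uw-and-uw+vw-negligible}, which applies since $n+1\ge3$. By Lemma \ref{half conjecture}, $\res(uw)=\res(u)\res(w)=xy$ is then negligible in $H^*(\Z/2^n\Z,\F_2)$. As a consistency check, $xy$ lies in $N(\Z/2^n\Z,\F_2)=\langle x\rangle$ from Proposition \ref{cyclic-2-eventually-negligible}, as it must.
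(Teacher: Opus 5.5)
Your proposal is correct and follows essentially the same route as the paper. Both arguments restrict the negligible class $uw$ from $D_{2^{n+1}}$ to the rotation subgroup $\langle r\rangle=\ker(v)$ via Lemma~\ref{half conjecture}, using $\res(u)=x$ and $\res(w)=y$. Your exactness argument, that $\ker(\res^2)=\mathrm{span}(uv,v^2)$ does not contain $w$ so $\res(w)\neq 0$, simply makes explicit a step the paper only asserts and records in Figure~\ref{Gysincyclic}.
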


\begin{proof}
Let $D_{2^{n+1}}=\langle r,s\rangle$ be the dihedral group containing the cyclic subgroup $\langle r\rangle\simeq \Z/2^n\Z$.
Use the standard generators $u,v,w\in H^*(D_{2^{n+1}},\mathbb F_2)$, and choose them so that $\langle r\rangle=\ker(v)$. Let $x\in H^1(\langle r\rangle,\mathbb F_2)$ and $y\in H^2(\langle r\rangle,\mathbb F_2)$ be the standard generators of
\[
H^*(\langle r\rangle,\mathbb F_2)\cong \mathbb F_2[x,y]/\langle x^2\rangle.
\]
On degree-one generators, restriction gives
\[
\res(u)=x,\qquad \res(v)=0.
\]
The degree-two lift in the same restriction computation gives $\res(w)=y$, hence
\[
\res(uw)=xy.
\]
By Proposition \ref{prop:uw-and-uw+vw-negligible}, the class $uw$ is negligible in $H^*(D_{2^{n+1}},\mathbb F_2)$. Lemma \ref{half conjecture} then implies that its restriction $xy$ is negligible in $H^*(\langle r\rangle,\mathbb F_2)$.
\end{proof}

\begin{cor}\label{cyclic-2-negligible}
For $n\geq 2$, $I(\Z/2^n\Z,\mathbb F_2)=\langle xy\rangle$.
\end{cor}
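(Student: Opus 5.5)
The plan is to sandwich $I(\Z/2^n\Z,\mathbb F_2)$ between $\langle xy\rangle$ and $\langle xy\rangle$ using the two preceding propositions together with the low-degree vanishing of Theorem \ref{nonneg-low-degree}.

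\textbf{Lower bound.} Proposition \ref{cyclic-2-xy-negligible} says that $xy$ is negligible. Since $I(\Z/2^n\Z,\mathbb F_2)$ is an ideal, this gives $\langle xy\rangle\subseteq I(\Z/2^n\Z,\mathbb F_2)$.

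\textbf{Upper bound.} Every negligible class is eventually negligible, so Proposition \ref{cyclic-2-eventually-negligible} gives $I(\Z/2^n\Z,\mathbb F_2)\subseteq N(\Z/2^n\Z,\mathbb F_2)=\langle x\rangle$. Since $x^2=0$, the ring is $\mathbb F_2[x,y]/\langle x^2\rangle$ and the ideal $\langle x\rangle$ has $\mathbb F_2$-basis $\{xy^k : k\ge 0\}$. There is one such element in each odd degree $2k+1$ and none in even degrees.

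The negligible ideal is graded by definition, so it suffices to work degree by degree.
\begin{itemize}
\item In degree $1$ the only candidate is $x$, which is not negligible by Theorem \ref{nonneg-low-degree}.
\item In every odd degree $2k+1$ with $k\ge1$, the unique nonzero element of $\langle x\rangle$ is $xy^k=(xy)\cdot y^{k-1}$, which already lies in $\langle xy\rangle$.
\end{itemize}
Hence every homogeneous negligible class lies in $\langle xy\rangle$, so $I(\Z/2^n\Z,\mathbb F_2)=\langle xy\rangle$.

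As a consequence, $I=\langle xy\rangle$ is strictly smaller than $N=\langle x\rangle$, so the negligible ideal of a cyclic $2$-group is \emph{not} radical.

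The only real input is the degree-one exclusion of $x$, which Theorem \ref{nonneg-low-degree} supplies immediately. So there is no substantive obstacle; the one point to state explicitly is that the ideals are graded, which justifies the degree-by-degree argument.
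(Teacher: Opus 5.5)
Your proof is correct and is essentially the paper's own argument. The upper bound uses $I\subseteq N=\langle x\rangle$, the basis $\{xy^k\}_{k\ge 0}$ of $\langle x\rangle$ forced by $x^2=0$, and Theorem \ref{nonneg-low-degree} to exclude $x$ in degree one; the lower bound is Proposition \ref{cyclic-2-xy-negligible}. Your closing remark is also correct: since $x^2=0\in I$ but $x\notin I$, the negligible ideal of $\Z/2^n\Z$ for $n\ge 2$ is not radical.
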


\begin{proof}
Every negligible class is eventually negligible, so by Proposition~\ref{cyclic-2-eventually-negligible},
\[
I(\Z/2^n\Z,\mathbb F_2)\subseteq N(\Z/2^n\Z,\mathbb F_2)=\langle x\rangle.
\]
Using $x^2=0$, the homogeneous elements of $\langle x\rangle$ are exactly the classes $\alpha\, xy^k$ with $\alpha\in\mathbb F_2$ and $k\geq 0$, of degree $2k+1$. Let $z\in I(\Z/2^n\Z,\mathbb F_2)$ be a non-zero homogeneous element, so $z=xy^k$ for some $k\geq 0$. The case $k=0$ would give the degree-one class $z=x$, which is not negligible by Theorem~\ref{nonneg-low-degree}. Hence $k\geq 1$ and $z=xy\cdot y^{k-1}\in\langle xy\rangle$. This proves $I(\Z/2^n\Z,\mathbb F_2)\subseteq\langle xy\rangle$. The reverse inclusion follows from Proposition~\ref{cyclic-2-xy-negligible}.
\end{proof}

\subsection{Eventually negligible ideal of mod-2 cohomology of finite abelian 2-groups}\label{compputation-finite-abelian-2}

In this section, we consider an arbitrary finite abelian 2-group 
$$G\cong (\mathbb Z/2\mathbb Z)^{\ell} \oplus \left(\bigoplus_{j = \ell + 1}^m \mathbb Z / 2^{k_j} \mathbb Z\right), \qquad k_j \ge 2$$
such that there are $\ell$ total $\mathbb Z/2\mathbb Z$ factors and $m - \ell$ higher order factors $(k_j \geq 2)$, giving a total of $m$ factors.
We do not consider elementary abelian or cyclic groups here since the classification for each of these cases is handled in \cite{GhermanMerkurjevJPAA} and Section \ref{computation-cyclic-2-groups} respectively.
In other words, we assume that $m> \ell \geq 0$ and $m\geq 2$ throughout the section.
The group $G$ contains a unique maximal elementary abelian subgroup isomorphic to $(\mathbb Z/2\mathbb Z)^m$, which we denote $E_{\max}$.

Our computations are greatly simplified upon noting the following fact about the cohomology of cyclic 2-groups.
\begin{prop}[cf. Section 3.2 and 3.5 of \cite{Evens}]
For $\ell_i \geq 2$,
\[H^*\!\left(\mathbb{Z}/2^{\ell_i}\mathbb{Z},\,\mathbb F_2\right)\cong\mathbb F_2[x_i,y_i]/\langle x_i^2\rangle.\]
\end{prop}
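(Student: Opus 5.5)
The goal is to show that for $\ell_i\geq 2$ the ring $H^*(\mathbb Z/2^{\ell_i}\mathbb Z,\mathbb F_2)$ is $\mathbb F_2[x_i,y_i]/\langle x_i^2\rangle$ with $\deg x_i=1$ and $\deg y_i=2$. Write $C=\mathbb Z/N\mathbb Z$ with $N=2^{\ell_i}$ and generator $g$. I would first compute the additive structure from the standard periodic free resolution
\[
\cdots\to\mathbb Z[C]\xrightarrow{\;\nu\;}\mathbb Z[C]\xrightarrow{\;g-1\;}\mathbb Z[C]\xrightarrow{\;\nu\;}\mathbb Z[C]\xrightarrow{\;g-1\;}\mathbb Z[C]\to\mathbb Z\to0,
\]
where $\nu=1+g+\cdots+g^{N-1}$. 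Applying $\mathrm{Hom}_{C}(-,\mathbb F_2)$ to this resolution, $g-1$ induces $0$ and $\nu$ induces multiplication by $N$. Both are zero on $\mathbb F_2$ because $N$ is even, so $H^d(C,\mathbb F_2)\cong\mathbb F_2$ for every $d\geq0$. Let $x_i$ be the nonzero class in degree $1$, namely the surjection $C\to\mathbb F_2$, and let $y_i$ be the nonzero class in degree $2$.

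Next I would pin down the ring structure. The class $x_i^2$ equals the Bockstein $\beta(x_i)$, i.e.\ the image of $x_i$ under the connecting map of $0\to\mathbb Z/2\to\mathbb Z/4\to\mathbb Z/2\to0$. This image vanishes exactly when $x_i$ lifts to a homomorphism $C\to\mathbb Z/4$. Because $\ell_i\geq2$, the map $g\mapsto1$ gives such a lift, so $x_i^2=0$. This is the one place where the hypothesis $\ell_i\geq2$ enters; for $\ell_i=1$ one instead gets $x_i^2\neq0$, recovering $\mathbb F_2[t]$.

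The step I expect to be the main obstacle is showing that cup product with $y_i$ is an isomorphism $H^d\to H^{d+2}$ for all $d$. I would identify $y_i$, up to the unique nonzero scalar, with the mod-$2$ reduction of the integral generator of $H^2(C,\mathbb Z)\cong\mathbb Z/N$. Cup product with that integral class is the Tate periodicity isomorphism, which is realized by the chain map shifting the periodic resolution by two. Since $\mathbb Z[C]$ is $\mathbb Z$-free, this cup product remains an isomorphism after tensoring the coefficients down to $\mathbb F_2$. Alternatively, I would check it directly with an explicit diagonal approximation on the periodic resolution, which gives $y_i^k\neq0$ and $x_iy_i^k\neq0$ for all $k$.

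Granting these steps, the monomials $y_i^k$ and $x_iy_i^k$ span the degrees $2k$ and $2k+1$ and are nonzero. Together with $x_i^2=0$, this yields the claimed isomorphism $H^*(C,\mathbb F_2)\cong\mathbb F_2[x_i,y_i]/\langle x_i^2\rangle$, in agreement with the cyclic case used in Section~\ref{computation-cyclic-2-groups}.
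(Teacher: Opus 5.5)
Your argument is correct. The paper gives no proof of this statement and simply defers to Evens. Your proposal supplies the standard self-contained computation. The periodic resolution gives the additive structure, and you then use two shortcuts instead of writing down a diagonal approximation.

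First, you obtain $x_i^2=0$ from $x_i^2=\beta(x_i)$ together with the lift $g\mapsto 1\in\mathbb Z/4$. This pinpoints exactly where $\ell_i\geq 2$ enters, which the citation hides. Second, you handle $\cup\,y_i$ by Tate periodicity.

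Two points deserve a line each.

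\begin{itemize}
\item Identifying $y_i$ with the reduction $\bar u$ of a generator $u\in H^2(C,\mathbb Z)\cong\mathbb Z/N$ requires $\bar u\neq 0$. This holds because $H^3(C,\mathbb Z)=0$, which makes $H^2(C,\mathbb Z)\to H^2(C,\mathbb F_2)$ surjective.
\item The real reason periodicity survives with $\mathbb F_2$ coefficients is that the shift-by-two chain map $P_{\bullet+2}\to P_\bullet$ is the identity on each cochain group $\mathrm{Hom}_C(P_d,M)$ for every coefficient module $M$. The differentials agree in degrees $\geq 1$, so $\cup\,\bar u\colon H^d(C,M)\to H^{d+2}(C,M)$ is an isomorphism for $d\geq 1$. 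In degree $0$ the map $1\mapsto y_i$ is trivially an isomorphism.
\end{itemize}

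Your appeal to $\mathbb Z$-freeness of $\mathbb Z[C]$ is not really the operative point. Your reading still works, though, because the isomorphism already holds at the cochain level.

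With these remarks, $y_i^k$ and $x_iy_i^k$ are nonzero for all $k$. The ring map $\mathbb F_2[x_i,y_i]/\langle x_i^2\rangle\to H^*(C,\mathbb F_2)$ is then surjective between one-dimensional graded pieces, hence an isomorphism.
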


We construct the cohomology of $G$ using K\"unneth formula.
With $\mathbb F_2$-coefficients, we simply adjoin the generators and relations from each factor of $G$.
Each $\mathbb Z/2\mathbb Z$ factor contributes a single degree 1 generator, which we denote $x_i$.
The restriction map from the cohomology of $G$ to that of $E_{\max}$ is the identity $x_{i} \mapsto x_{i}$.

Each $\mathbb Z / 2^{k_j} \mathbb Z$ factor for $k_j\geq 2$ contributes a degree 1 generator, denoted $x_{j}$, and a degree 2 generator, denoted $y_{j}$. The restriction map is
\[ \res(x_j)=0, \qquad \res(y_j) = x_j^2,\]
and there is also a single ring relation $x_j^2$.  For a general finite abelian 2-group as above, the generators would be
\begin{equation}\label{elem ab indices}
    \{x_1, \dots, x_\ell, x_{\ell + 1}, \dots, x_m, y_{\ell  + 1}, \dots y_{m}\}
\end{equation}
with $\deg(x_k)=1$ for all $1\leq k\leq m$ and $\deg(y_j)=2$ for all $\ell+1\leq j\leq m$.
The restriction maps from the mod-2 cohomology of $G$ to that of $E_{\max}$ are
\begin{align*}
     &\text{res}(x_i) = x_i \quad \text{for } 1 \leq i \leq \ell, 
    \\ &\text{res}(x_{j}) = 0 \quad \text{for } \ell + 1 \leq j \leq m,
    \\ &\text{res}(y_{j}) = x_{j}^2 \quad  \text{ for } \ell + 1 \leq j \leq m,
\end{align*}
and recall also that each $x_j$ coming from a higher-order cyclic factor satisfies the ring relation $x_j^2=0$ in $H^*(G,\mathbb F_2)$. Note that in the formula $\res(y_j)=x_j^2$ above, the symbol $x_j$ on the right-hand side denotes instead the polynomial generator of $H^*(E,\mathbb F_2)$, where no such relation holds.
Now we can identify our candidate generators for the eventually negligible ideal.
\begin{prop}\label{2x4}
    
    With indices as in Equation (\ref{elem ab indices}), and $1\leq i<i'\leq \ell$ and  $\ell+1\leq j<j'\leq m$,
    \begin{equation*}
        \langle  x_j, x_i^2x_{i'} + x_ix_{i'}^2, y_j^2y_{j'} + y_jy_{j'}^2, x_iy_j(x_i^2 + y_j)\rangle \subseteq N(G,\mathbb F_2).
    \end{equation*}
    
\end{prop}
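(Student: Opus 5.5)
Since $N(G,\mathbb F_2)$ is an ideal, it suffices to show that each listed generator is eventually negligible. By Corollary~\ref{quillen-venkov}, a class is eventually negligible exactly when its restriction to every elementary abelian $2$-subgroup of $G$ is negligible. Every elementary abelian $2$-subgroup of the abelian $2$-group $G$ lies in the unique maximal one $E_{\max}\cong(\mathbb Z/2\mathbb Z)^m$. Restriction to a subgroup preserves negligibility by Lemma~\ref{half conjecture}, and restriction is transitive. It therefore suffices to check that the restriction of each generator to $E_{\max}$ lies in $I(E_{\max},\mathbb F_2)$. By Proposition~\ref{elem ab 2-group vanishing}, this amounts to checking that the restricted polynomial vanishes at every point of $\mathbb F_2^m$.

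We compute the restrictions using the formulas $\res(x_i)=x_i$ for $i\le \ell$, $\res(x_j)=0$ and $\res(y_j)=x_j^2$ for $j>\ell$, together with the fact that $\res$ is a ring map.
\begin{itemize}
\item For $x_j$ with $j\ge \ell+1$: $\res(x_j)=0$, which is trivially negligible. (Alternatively, $x_j^2=0$ already holds in $H^*(G,\mathbb F_2)$, so $x_j$ is nilpotent and hence eventually negligible.)
\item For $x_i^2x_{i'}+x_ix_{i'}^2$ with $i,i'\le\ell$: the restriction is the same polynomial in $\mathbb F_2[x_1,\dots,x_m]$, which is one of the generators of $I(E_{\max},\mathbb F_2)$ listed in Theorem~\ref{negideal}.
\item For $y_j^2y_{j'}+y_jy_{j'}^2$: the restriction is $x_j^4x_{j'}^2+x_j^2x_{j'}^4=(x_j^2x_{j'}+x_jx_{j'}^2)^2$, the square of a generator from Theorem~\ref{negideal}. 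It lies in the ideal, or equivalently it vanishes on $\mathbb F_2^m$ because $a^2b+ab^2=ab(a+b)=0$ for all $a,b\in\mathbb F_2$.
\item For $x_iy_j(x_i^2+y_j)$: the restriction is $x_ix_j^2(x_i^2+x_j^2)=x_ix_j^2(x_i+x_j)^2$. On $\mathbb F_2$-points this equals $x_ix_j(x_i+x_j)=0$, so it is negligible by Proposition~\ref{elem ab 2-group vanishing}.
\end{itemize}

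The argument requires care at two points. The first is the reduction to $E_{\max}$ alone: one must note that every elementary abelian subgroup of $G$ is contained in $E_{\max}$, since $E_{\max}$ is exactly the $2$-torsion subgroup. The second is keeping track of the symbol $x_j$, which denotes a class with $x_j^2=0$ in $H^*(G,\mathbb F_2)$ but the polynomial generator of $H^*(E_{\max},\mathbb F_2)$, where no such relation holds. The claimed inclusion follows once these points are handled.
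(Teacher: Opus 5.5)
Your proposal is correct and follows essentially the same route as the paper: you restrict each generator to $E_{\max}$, note that the images ($0$, a Serre generator, the square $(x_j^2x_{j'}+x_jx_{j'}^2)^2$, and $x_ix_j^2(x_i+x_j)^2$) lie in $I(E_{\max},\mathbb F_2)$, and conclude via the Quillen--Venkov analog. Your explicit check that every elementary abelian $2$-subgroup lies in $E_{\max}$ and that restriction preserves negligibility fills in a step the paper leaves implicit, and your $\mathbb F_2$-point check for the last generator is equivalent to the paper's factorization $(x_ix_j^2+x_i^2x_j)\cdot x_j(x_i+x_j)$.
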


\begin{proof}
The class $x_j$ restricts to zero on $E_{\max}$. The class
$x_i^2x_{i'}+x_ix_{i'}^2$ restricts to a generator of
$I(E_{\max},\mathbb F_2)$ and is therefore eventually negligible. The third generator restricts to
\[
x_j^2x_{j'}^4+x_{j'}^2x_j^4=(x_jx_{j'}^2+x_j^2x_{j'})^2,
\]
and the fourth restricts to $$x_i x_j^2 (x_i^2+x_j^2)=(x_ix_j^2+x_i^2x_j)\cdot x_j(x_i+x_j).$$
By Theorem~\ref{negideal}, both of these images lie in $I(E,\mathbb F_2)$, so by Theorem ~\ref{detect on elem ab}, the generators are eventually negligible.
\end{proof}

Since we know the elements in Propositions \ref{2x4} map to zero in the quotient of the cohomology ring by the eventually negligible elements, we can rearrange to get algebraically equivalent expressions of certain elements in a more desirable form.
Namely, if we have a term involving two monomials raised to powers, we may rewrite it so that one of the monomials is raised to the power of one.

\begin{cor}
\label{lem2x4}
    Let $G$ be a finite abelian $2$-group, $x_i$ be a degree one generator of $H^*(G,\mathbb F_2)$, and $y_j$ a degree two generator of $H^*(G,\mathbb F_2)$.
    Then for all $k\geq 3$ and $\ell\geq1$, we have 
    $$x_i^{k}y_j^{\ell} = x_i^{k+2(\ell-1)}y_j$$ in the quotient $H^*(G,\mathbb F_2)/N(G,\mathbb F_2)$.
\end{cor}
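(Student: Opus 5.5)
The plan is to argue directly from the generator $x_iy_j(x_i^2+y_j)$ of Proposition~\ref{2x4}, using only that this element lies in $N(G,\mathbb F_2)$ and that $N(G,\mathbb F_2)$ is an ideal. First I dispose of the degenerate index choice. If $x_i$ comes from a higher-order cyclic factor, then $x_i\in N(G,\mathbb F_2)$ by Proposition~\ref{2x4}. In that case both sides of the claimed identity are multiples of $x_i$ (as $k\geq 3$), so both vanish in the quotient. Hence we may assume $1\leq i\leq \ell$, so $x_i$ comes from a $\mathbb Z/2\mathbb Z$ factor, while $y_j$ comes from a factor $\mathbb Z/2^{k_j}\mathbb Z$ with $k_j\ge 2$.

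The key relation is the membership $x_iy_j(x_i^2+y_j)\in N(G,\mathbb F_2)$. In the quotient $H^*(G,\mathbb F_2)/N(G,\mathbb F_2)$ it reads
\[
x_iy_j^2 = x_i^3y_j .
\]
I then induct on $\ell$ (here the exponent in the statement, not the number of $\mathbb Z/2$ factors). The case $\ell=1$ is a tautology. For $\ell\geq 2$ and $k\geq 1$, factor out $x_iy_j^2$ and apply the relation, using graded commutativity (which costs nothing in characteristic $2$):
\[
x_i^ky_j^\ell = x_i^{k-1}y_j^{\ell-2}\cdot x_iy_j^2 = x_i^{k-1}y_j^{\ell-2}\cdot x_i^3y_j = x_i^{k+2}y_j^{\ell-1}.
\]
Since $k+2\geq 3$, the induction hypothesis applies to $x_i^{k+2}y_j^{\ell-1}$. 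It gives $x_i^{(k+2)+2(\ell-2)}y_j = x_i^{k+2(\ell-1)}y_j$, which completes the induction. Note that this argument only uses $k\geq 1$, so the hypothesis $k\geq 3$ is more than enough.

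As a sanity check, or as an alternative proof, I would verify the identity on $E_{\max}$. By Corollary~\ref{quillen-venkov}, the ideal $N(G,\mathbb F_2)$ is the preimage of $I(E_{\max},\mathbb F_2)$, because $E_{\max}$ contains every elementary abelian subgroup. So it suffices to show that the difference of the two sides restricts to a negligible class. The two restrictions are $x_i^kx_j^{2\ell}$ and $x_i^{k+2\ell-2}x_j^2$. On every point of $\mathbb F_2^m$ both evaluate to $x_ix_j$, since $a^n=a$ for $a\in\mathbb F_2$ and $n\ge1$. Proposition~\ref{elem ab 2-group vanishing} then shows the difference is negligible.

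There is no real obstacle here. The only points needing care are bookkeeping: ensuring the exponent of $x_i$ stays positive at each step of the induction, and handling the case where $x_i$ comes from a higher-order factor separately.
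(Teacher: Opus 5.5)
Your argument is correct and is essentially the paper's proof. You read off $x_i^3y_j = x_iy_j^2$ in the quotient from the generator $x_iy_j(x_i^2+y_j)$ of Proposition~\ref{2x4}, and then induct on $\ell$ via $x_i^ky_j^\ell = x_i^{k+2}y_j^{\ell-1}$; your separate treatment of the case where $x_i$ comes from a higher-order factor, and your alternative check by evaluating restrictions to $E_{\max}$ at $\mathbb F_2$-points, are both valid but not needed.
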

\begin{proof}
    Since $x_i y_j(x_i^2 + y_j) \in N(G, \mathbb F_2)$ by Proposition \ref{2x4}, this element must map to zero in the quotient.
    With a slight abuse of notation, we can say in the quotient
    $$x_i y_j(x_i^2 + y_j) = x_i^3y_j + x_i y_j^2 = 0.$$
    Since the coefficients lie in $\mathbb F_2$, this gives  $$x_i^3y_j=x_iy_j^2.$$
    Now write 
    \[
    x_i^k y_j^{\ell}
    =x_i^{k-1}x_i y_j^{\ell-2}y_j^2
    =x_i^{k-1}y_j^{\ell-2}x_i^{3}y_j
    =x_i^{k+2}y_j^{\ell-1}.
    \]
    By induction, we obtain 
    \[
    x_i^k y_j^{\ell}
    = x_i^{k+2(\ell-1)}y_j. \qedhere
    \]
\end{proof}
\begin{cor}
\label{lem32}
    Let $G$ be a finite abelian 2-group, and $x_i$, $x_j$ be distinct degree one generators of $H^*(G,\mathbb F_2)$. For all $k \geq 2$ and $\ell \geq 1$,
    \begin{equation*}
        x_i^k x_j^\ell = x_i^{k+\ell - 1}x_j
    \end{equation*}
    in $H^*(G,\mathbb F_2)/N(G,\mathbb F_2)$.
\end{cor}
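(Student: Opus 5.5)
The identity will follow from the single relation $x_i^2x_j = x_ix_j^2$ in the quotient $H^*(G,\mathbb F_2)/N(G,\mathbb F_2)$, by the same induction on $\ell$ used in Corollary~\ref{lem2x4}.

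First we establish the relation. If both $x_i$ and $x_j$ come from $\mathbb Z/2\mathbb Z$ factors, that is $i,j\le \ell$ in the indexing of Equation~\eqref{elem ab indices}, then $x_i^2x_j+x_ix_j^2\in N(G,\mathbb F_2)$ by Proposition~\ref{2x4}. If either of them, say $x_j$ with $j\ge \ell+1$, comes from a higher-order cyclic factor, then $x_j\in N(G,\mathbb F_2)$ by Proposition~\ref{2x4}. In that case both sides of the claimed identity vanish in the quotient, because $\ell\ge 1$ and $k\ge 2$ force each side to contain a factor of $x_j$, or of $x_i$ if $x_i$ is the higher-order one. So the only case needing an argument is the elementary one, where the quotient relation is
\[
x_i^2x_j = x_ix_j^2 .
\]
More uniformly, one can check directly that $x_i^2x_j+x_ix_j^2$ restricts into $I(E_{\max},\mathbb F_2)$: either it maps to a generator from Theorem~\ref{negideal}, or it maps to $0$ because some $\res(x_j)=0$. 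Theorem~\ref{detect on elem ab} then shows it is eventually negligible in every case.

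Next we induct on $\ell$. The case $\ell=1$ is trivial. For $\ell\ge 2$, since $k\ge 2$ we can write
\[
x_i^kx_j^\ell = x_i^{k-1}x_j^{\ell-2}\,(x_ix_j^2) = x_i^{k-1}x_j^{\ell-2}\,(x_i^2x_j) = x_i^{k+1}x_j^{\ell-1}.
\]
The exponent of $x_i$ is still at least $2$, so the induction hypothesis applies and gives $x_i^{k+1}x_j^{\ell-1}=x_i^{(k+1)+(\ell-1)-1}x_j = x_i^{k+\ell-1}x_j$. This uses graded commutativity, which holds with no signs over $\mathbb F_2$.

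The only real obstacle is making sure the base relation holds in every case, including when one generator comes from a higher cyclic factor. That case is handled by noting that $x_j$ itself lies in $N(G,\mathbb F_2)$.
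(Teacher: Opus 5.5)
Your proposal is correct and follows the paper's proof: it derives the quotient relation $x_i^2x_j=x_ix_j^2$ from the restriction to $E_{\max}$ (each generator restricts to a polynomial generator or to $0$) combined with Theorem~\ref{negideal}, and then inducts using $x_i^kx_j^\ell=x_i^{k+1}x_j^{\ell-1}$. Your separate treatment of generators coming from higher-order cyclic factors, via $x_j\in N(G,\mathbb F_2)$, spells out what the paper compresses into the remark that ``the restriction maps are either the identity or the zero map.''
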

\begin{proof}
    The restriction maps are either the identity or the zero map. Thus, by Theorem \ref{negideal}, $x_i^2x_j + x_ix_j^2$ are in the eventually negligible ideal, and
    $$ x_i^2 x_j + x_ix_j^2 = 0\implies  x_i^2x_j = x_ix_j^2\mod 2.$$
    Then observe
    $$x_i^kx_j^\ell=x_i^{k - 1}x_j^{\ell - 2} (x_i x_j^2) = x_i^{k - 1}x_j^{\ell - 2} (x_i^2 x_j) = x_i^{k + 1}x_j^{\ell - 1}.$$
    The result follows by induction.
\end{proof}

\begin{theorem}
    Let $G$ be a finite abelian 2-group that is neither cyclic nor elementary abelian.
    With indices as in Equation (\ref{elem ab indices}),
    $$N(G,\mathbb F_2)  = \langle x_j, x_i^2x_{i'} + x_ix_{i'}^2, y_j^2y_{j'} + y_jy_{j'}^2, x_iy_j(x_i^2 + y_j)\rangle.$$
\end{theorem}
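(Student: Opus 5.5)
We first prove $\supseteq$. It is Proposition~\ref{2x4}. For $\subseteq$ we follow the methodology of diagram~\eqref{comm diagram}, using only the single subgroup $E_{\max}$.

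\textbf{Reduction to one subgroup.} Every elementary abelian $2$-subgroup $E$ of $G$ lies in $E_{\max}$. Restriction $H^*(E_{\max},\mathbb F_2)\to H^*(E,\mathbb F_2)$ preserves negligibility by Lemma~\ref{half conjecture}. Hence Corollary~\ref{quillen-venkov} says that $u$ is eventually negligible if and only if $\res(u)\in I(E_{\max},\mathbb F_2)$. So
\[
N(G,\mathbb F_2)=\res^{-1}\bigl(I(E_{\max},\mathbb F_2)\bigr).
\]
By Proposition~\ref{elem ab 2-group vanishing}, $I(E_{\max},\mathbb F_2)$ consists exactly of the polynomials in $x_1,\dots,x_m$ that vanish on $\mathbb F_2^m$. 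Let $J$ be the ideal on the right-hand side of the theorem. It then suffices to show the following: if $y$ is a nonzero homogeneous class in $H^*(G,\mathbb F_2)/J$ of degree $d>0$, then $\res(y)$ does not vanish at some point of $\mathbb F_2^m$.

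\textbf{Normal form in the quotient.} Since $x_j\in J$ for $j>\ell$, the quotient $H^*(G,\mathbb F_2)/J$ is spanned by monomials $x^{\mathbf a}y^{\mathbf b}$ in $x_1,\dots,x_\ell$ and $y_{\ell+1},\dots,y_m$. Define the support $S\subseteq\{1,\dots,m\}$ of such a monomial to be the set of indices whose exponent is positive.

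We claim that in the quotient, any two monomials with the same support $S$ and the same degree $d$ are equal. The proof moves degree between two indices of $S$ while keeping every exponent at least $1$.
\begin{enumerate}
\item Between two $x$'s, use Corollary~\ref{lem32}, which relies on $x_i^2x_{i'}+x_ix_{i'}^2\in J$.
\item Between an $x_i$ and a $y_j$, use Corollary~\ref{lem2x4}, together with the relation $x_i^2y_j\cdot x_i=x_iy_j\cdot y_j\cdot$(unit shift) obtained by multiplying $x_iy_j(x_i^2+y_j)$ by $x_i$. This lets us trade $y_j$ for $x_i^2$ whenever $x_i$ and $y_j$ both still appear.
\item Between two $y$'s, use the analogous relation $y_j^2y_{j'}=y_jy_{j'}^2$.
\end{enumerate}
Each move preserves degree. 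Each move changes only exponents, by amounts compatible with the weights $1$ and $2$. Therefore any two exponent vectors of degree $d$ with support $S$ are connected, and we can bring each monomial to a canonical one, for instance with all excess degree pushed onto the smallest index of $S$.

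The consequence is that $H^d(G,\mathbb F_2)/J$ is spanned by at most one class $m_S$ for each nonempty $S$. There is a degree constraint: $m_S$ exists only if $d\ge |S\cap\{1,\dots,\ell\}|+2|S\setminus\{1,\dots,\ell\}|$ and, when $S\subseteq\{\ell+1,\dots,m\}$, only if $d$ is even. This bookkeeping, especially the parity of the total $x$-degree when $y$'s are present, is the step I expect to be the main obstacle. The moves in item~2 change the $x$-exponent by $2$, so one must check that parity never obstructs connectivity.

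\textbf{Independence after restriction.} Under $\res$, the class $m_S$ maps to a monomial in $x_1,\dots,x_m$ whose support is exactly $S$, because $y_j\mapsto x_j^2$. As a function on $\mathbb F_2^m$, this monomial is the indicator $\chi_S(p)=[S\subseteq\operatorname{supp}(p)]$. Suppose $y=\sum_S\alpha_S m_S$ with some $\alpha_S\neq0$, and choose $S_0$ minimal among the $S$ with $\alpha_S\ne0$. Evaluating $\res(y)$ at the point $p$ with $\operatorname{supp}(p)=S_0$ gives $\alpha_{S_0}=1\ne0$. So $\res(y)\notin I(E_{\max},\mathbb F_2)$, which shows that $\overline{\res}$ in diagram~\eqref{comm diagram} is injective. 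Hence $N(G,\mathbb F_2)\subseteq J$, and together with Proposition~\ref{2x4} this gives equality.
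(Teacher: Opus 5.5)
Your proposal is correct and is essentially the paper's proof. It uses the same reduction $N(G,\mathbb F_2)=\res^{-1}(I(E_{\max},\mathbb F_2))$, the same normal form with one canonical monomial per support (excess degree placed on the smallest index of $S$), and the same evaluation at the indicator vector of a minimal support.

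The parity concern you flag is not an obstacle. The generator $x_iy_j(x_i^2+y_j)\in J$ gives $x_i^3y_j=x_iy_j^2$ directly, with no extra multiplication by $x_i$ needed. Hence $x_i^ay_j^b=x_i^{a+2(b-1)}y_j$ for all $a,b\ge1$. You can therefore first collapse every $y_j$-exponent to $1$ onto $x_{i_0}$, and then finish with unit moves among the $x$'s. This works because the smallest index of $S$ is an $x$-index whenever $S\cap\{1,\dots,\ell\}\neq\varnothing$.
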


\begin{proof}
Let
$$ J=\langle x_j,\ x_i^2x_{i'}+x_ix_{i'}^2,\ y_j^2y_{j'}+y_jy_{j'}^2,\ x_iy_j(x_i^2+y_j)\rangle,$$
where the indices have the same ranges as in the statement. Proposition~\ref{2x4} gives $J\subseteq N(G,\mathbb F_2)$, so it remains to prove the reverse inclusion. Since $E_{\max}$ is the unique maximal elementary abelian $2$-subgroup of $G$, every elementary abelian $2$-subgroup is contained in $E_{\max}$. Hence, by Corollary~\ref{quillen-venkov} and Lemma~\ref{half conjecture}, $N(G,\mathbb F_2)=\operatorname{res}^{-1}(I(E_{\max},\mathbb F_2))$. It is therefore enough to prove that the induced map
$$\overline{\operatorname{res}}\colon H^*(G,\mathbb F_2)/J
\longrightarrow H^*(E_{\max},\mathbb F_2)/I(E_{\max},\mathbb F_2)$$
is injective. Write $L=\{1,\dots,\ell\}$ and $H=\{\ell+1,\dots,m\}$, and argue degree by degree. The degree-zero case is immediate, so fix $d>0$. In the quotient by $J$, the generators $x_j$ for $j\in H$ vanish. Thus every degree-$d$ monomial is represented by $M=\prod_{i\in S_x}x_i^{a_i}\prod_{j\in S_y}y_j^{b_j}$, where $S_x\subseteq L$, $S_y\subseteq H$, and $a_i,b_j\geq1$. The defining relations of $J$ imply, for positive integers $a,b$,
$$ x_i^ax_{i'}^b=x_i^{a+b-1}x_{i'},\qquad
y_j^ay_{j'}^b=y_j^{a+b-1}y_{j'},\qquad
x_i^ay_j^b=x_i^{a+2(b-1)}y_j. $$

Suppose first that $S_x\neq\varnothing$, and let $i_0$ be its smallest element. Repeatedly applying these relations reduces $M$ to the canonical monomial $M_{S_x,S_y}^{(d)}=x_{i_0}^{\,d-|S_x|-2|S_y|+1}\prod_{i\in S_x\setminus\{i_0\}}x_i\prod_{j\in S_y}y_j$. The exponent of $x_{i_0}$ is positive because $d\geq |S_x|+2|S_y|$. If $S_x=\varnothing$, then $S_y\neq\varnothing$ and $d$ is even. Writing $j_0$ for the smallest element of $S_y$, the $y$-relations reduce $M$ to $M_{\varnothing,S_y}^{(d)}=y_{j_0}^{\,d/2-|S_y|+1}\prod_{j\in S_y\setminus\{j_0\}}y_j$. Consequently, every homogeneous $f\in H^d(G,\mathbb F_2)/J$ can be written as $f=\sum_{S_x,S_y}c_{S_x,S_y}M_{S_x,S_y}^{(d)}$, where $c_{S_x,S_y}\in\mathbb F_2$ and each pair $(S_x,S_y)$ occurs at most once. Indeed, monomials with the same support reduce to the same canonical monomial, after which their coefficients are combined in $\mathbb F_2$. Let $f\neq0$, and choose a pair $(S_x,S_y)$ with $c_{S_x,S_y}=1$ such that $S=S_x\cup S_y$ is minimal under inclusion among the supports with non-zero coefficient. Choose polynomial generators $t_1,\dots,t_m$ for $H^*(E_{\max},\mathbb F_2)=\mathbb F_2[t_1,\dots,t_m]$ so that $\operatorname{res}(x_i)=t_i$ for $i\in L$ and $\operatorname{res}(y_j)=t_j^2$ for $j\in H$. Let $\mathbf 1_S\in\mathbb F_2^m$ have $r$-th coordinate equal to $1$ when $r\in S$ and $0$ otherwise. A canonical monomial with support $T$ evaluates at $\mathbf 1_S$ to $1$ exactly when $T\subseteq S$. By minimality, no proper subset of $S$ occurs with non-zero coefficient, and by construction there is only one canonical monomial with support $S$. Thus $\operatorname{res}(f)(\mathbf 1_S)=1$. By Proposition~\ref{elem ab 2-group vanishing}, $\operatorname{res}(f)$ is not negligible, so $\overline{\operatorname{res}}(f)\neq0$. Therefore $\overline{\operatorname{res}}$ is injective and $N(G,\mathbb F_2)\subseteq J$. Together with Proposition~\ref{2x4}, this proves $N(G,\mathbb F_2)=J$.
\end{proof}

\subsection{Negligible ideal of the mod-2 cohomology of generalized quaternions}\label{computation-generalized-quaternions}

In this section, we leverage the Gysin sequence technique to find the negligible ideal of $Q_{2^n}$ for $n \geq 4$.
The $n = 3$ case in Proposition \ref{Q8} was recently proved by Baek and Merkurjev in \cite[Theorem 7.1]{BaekMerkurjev}.
The mod-2 cohomology ring for $Q_8$ is 
\begin{equation*}
    H^*(Q_8, \mathbb F_2) \cong \mathbb F_2[a, b, c] / \langle a^2 + ab + b^2, a^3 \rangle,
\end{equation*}
where $\text{deg}(a) = \text{deg}(b) = 1$ and $\text{deg}(c) = 4$.

\begin{prop}[cf. Theorem 7.1 of \cite{BaekMerkurjev}]\label{Q8}
    The negligible ideal of $H^*(Q_{8}, \mathbb F_2)$ is
    \begin{equation*}
        I(Q_8,\F_2)=\langle a^2b, ac, bc\rangle.
    \end{equation*}
\end{prop}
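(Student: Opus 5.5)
The plan has two halves. First I compute the eventually negligible ideal $N(Q_8,\F_2)$ via Corollary \ref{quillen-venkov}. Then I show that every generator of it is already negligible.

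\textbf{Step 1: the eventually negligible ideal.} The group $Q_8$ has a unique elementary abelian $2$-subgroup, its center $Z\simeq\Z/2\Z$. Since $H^*(Z,\F_2)=\F_2[t]$ and $I(Z,\F_2)=0$, Corollary \ref{quillen-venkov} says $N(Q_8,\F_2)=\ker(\res_Z)$. The degree-one classes restrict to zero on $Z$, because $Z=[Q_8,Q_8]$. The class $c$ restricts to $t^4$: it is the Euler class of the $4$-dimensional real representation $\mathbb H$, on which $Z$ acts by $-1$. A monomial basis of $H^*(Q_8,\F_2)$ is $\{1,a,b,a^2,ab,a^2b\}\cdot c^k$ (using $b^2=a^2+ab$, $a^3=0$, $b^3=0$). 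It follows that
\[
\ker(\res_Z)=\langle a,b\rangle, \qquad N(Q_8,\F_2)=\langle a,b\rangle.
\]

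\textbf{Step 2: lower bound on $I(Q_8,\F_2)$.} Theorem \ref{nonneg-low-degree} excludes every nonzero class of degree $1$ or $2$. So a negligible class in $\langle a,b\rangle$ must lie in degree $\geq 3$. There the homogeneous pieces of $\langle a,b\rangle$ are spanned by $a^2b\,c^k$ and by $a c^k, b c^k, a^2c^k, abc^k$ with $k\ge1$. All of these lie in $\langle a^2b,ac,bc\rangle$, since $a^2c=a\cdot ac$ and $abc=a\cdot bc$. Hence $I(Q_8,\F_2)\subseteq\langle a^2b,ac,bc\rangle$. The remaining task is to prove that $a^2b$, $ac$ and $bc$ are negligible.

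\textbf{Step 3: negligibility via the Gysin sequence.} I would use the three index-$2$ subgroups $\langle i\rangle,\langle j\rangle,\langle k\rangle\simeq\Z/4\Z$. They are the kernels of $a$, $b$ and $a+b$ (after labeling). Corollary \ref{cyclic-2-negligible} gives $I(\Z/4\Z,\F_2)=\langle xy\rangle$, with $xy$ in degree $3$. Lemma \ref{lemma:corestriction} then makes $\mathrm{cor}(xy)$ negligible in $H^3(Q_8,\F_2)$.

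Take the subgroup $\ker(a)$. Its Gysin sequence (Lemma \ref{restriction les}) reads
\[
H^3(\Z/4)\xrightarrow{\mathrm{cor}}H^3(Q_8)\xrightarrow{\cup a}H^4(Q_8).
\]
We have $H^3(Q_8)=\langle a^2b\rangle$ and $a\cdot a^2b=0$. So $\mathrm{cor}$ maps onto $\langle a^2b\rangle$. Moreover $H^3(\Z/4)=\langle xy\rangle$, so $\mathrm{cor}(xy)=a^2b$, and therefore $a^2b$ is negligible.

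For $ac$, I would use Proposition \ref{corestriction identity}. Restriction to $\langle j\rangle$ sends $c\mapsto y^2$ (up to the relevant identification), which gives
\[
\mathrm{cor}(x y^3)=\mathrm{cor}\big(xy\cdot\res(c)\big)=\mathrm{cor}(xy)\,c .
\]
This alone only yields $a^2bc$. So instead I would read off degree $5$ of the Gysin sequence. There $H^5(Q_8)=\langle ac,bc\rangle$, and the kernel of $\cup a$ is $\langle bc+\ldots\rangle$, which is one-dimensional. The image of $\mathrm{cor}$ on $H^5(\Z/4)=\langle xy^2\rangle$ is a negligible class, because $xy^2=xy\cdot y$ is negligible. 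That class is nonzero, hence equals the unique nonzero element of $\ker(\cup a)$, which is one of $ac$, $bc$, $ac+bc$. Running the same argument with the three subgroups yields three such classes, corresponding to the three characters. These span $\langle ac,bc\rangle$, and this gives the result.

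\textbf{Main obstacle.} The hardest part is nailing down $\ker(\cup a)$ on $H^5$ and the restriction of $c$ to each cyclic subgroup. If that bookkeeping fails, I would simply cite \cite[Theorem 7.1]{BaekMerkurjev} for $ac$ and $bc$.
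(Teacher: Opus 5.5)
Steps 1 and 2 are correct, and so is your degree-3 argument. On $H^3(Q_8,\F_2)=\langle a^2b\rangle$ the map $\cup a$ is zero, so $\mathrm{cor}(xy)=a^2b$, which is negligible by Lemma \ref{lemma:corestriction}. This is the same mechanism the paper uses for $a^3$ in $Q_{16}$.

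\textbf{The degree-5 step fails.} Your claim that $\ker(\cup a)$ on $H^5$ is one-dimensional is false. By your own basis, $H^*(Q_8,\F_2)$ is free over $\F_2[c]$ on $1,a,b,a^2,ab,a^2b$. Hence $H^6(Q_8,\F_2)=\langle a^2c,abc\rangle$ is two-dimensional. The map $\cup a$ sends $ac\mapsto a^2c$ and $bc\mapsto abc$, so $\cup a\colon H^5\to H^6$ is injective. Exactness then forces $\mathrm{cor}\colon H^5(\Z/4\Z,\F_2)\to H^5(Q_8,\F_2)$ to be zero.

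Your own projection-formula computation shows the same thing:
$\mathrm{cor}(xy^2)=\mathrm{cor}(x\cdot\res(c))=\mathrm{cor}(x)\,c=0$,
because $\mathrm{cor}(x)\in\ker(\cup a\colon H^1\to H^2)=0$. The same holds for $\ker(b)$ and $\ker(a+b)$, since $\cup b$ and $\cup(a+b)$ are also injective on $H^1$ and on $H^5$. So the class you assert is nonzero is in fact zero, and the three subgroups give you nothing in degree $5$.

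\textbf{This cannot be repaired within the corestriction method.} The proper subgroups of $Q_8$ are the trivial group, the center (with $I=0$), and three cyclic groups of order $4$. The only nonzero homogeneous negligible classes on them are the $xy^k$ on the cyclic subgroups. These corestrict to $\mathrm{cor}(xy^{2m+1})=\mathrm{cor}(xy)\,c^m=a^2b\,c^m$ and $\mathrm{cor}(xy^{2m})=0$. So no corestriction from a subgroup reaches $ac$ or $bc$.

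Going up to $Q_{16}$ does not help either: within this paper that route is circular. Figures \ref{Q16resQ8-1} and \ref{Q16resQ8-2} use the negligible degree-5 classes of $Q_8$ as input.

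Some genuinely arithmetic input is needed. That is why the paper does not prove this proposition but imports it from \cite[Theorem 7.1]{BaekMerkurjev}. Your fallback citation is therefore essential, not optional. Your argument independently establishes:
\begin{itemize}
\item the upper bound $I(Q_8,\F_2)\subseteq\langle a^2b,ac,bc\rangle$;
\item $a^2b\in I(Q_8,\F_2)$, using Corollary \ref{cyclic-2-negligible}. Its proof goes through $D_8$, not $Q_8$, so there is no circularity.
\end{itemize}
The negligibility of $ac$ and $bc$ still rests entirely on the citation.
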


The cohomology of $Q_{2^n}$ is identical for all $n \geq 4$, yet differs from $Q_8$. Since the Gysin long exact sequence we employ is based on the inclusion $Q_8 \subset Q_{16}$, 
we first apply it to compute the negligible ideal of $H^*(Q_{16}, \mathbb F_2)$, which serves as the base case for our induction. We then proceed inductively on $n$, restricting to the index-$2$ subgroups $Q_{2^{n-1}}$ and $\Z / {2^{n-1}} \Z$ of $Q_{2^n}$ to determine the negligible ideal of $H^*(Q_{2^n}, \mathbb F_2)$.

The mod-2 cohomology ring of $Q_{2^n}$ for $n \geq 4$ can be expressed as 
$$H^*(Q_{2^n},\mathbb F_2)\cong \mathbb F_2[a, b, c]/\langle ab, a^3 + b^3\rangle,$$
where $\text{deg}(a) = \text{deg}(b) = 1$ and $\text{deg}(c) = 4$.
Similarly, in the inductive step, the mod-2 cohomology ring of $Q_{2^{n - 1}}$ for $n \geq 5$ is expressed as 
$$H^*(Q_{2^{n - 1}},\mathbb F_2)\cong \mathbb F_2[a', b', c']/\langle a'b', a'^3 + b'^3\rangle.$$
When we restrict to $Q_8$, we write its cohomology ring as
$$H^*(Q_8, \mathbb F_2) \cong \mathbb F_2[a', b', c'] / \langle a'^2b' + a'b'^2, a'^3 \rangle.$$
We compute the restriction maps by considering the degree 1 case. Recall 
\begin{align*}
H^1(Q_{2^n};\mathbb Z/2\mathbb Z)
&= \operatorname{Hom}(Q_{2^n},\mathbb Z/2\mathbb Z) \\
&\cong \operatorname{Hom}\bigl(Q_{2^n}/[Q_{2^n},Q_{2^n}],\mathbb Z/2\mathbb Z\bigr) \\
&\cong \operatorname{Hom}\bigl(\mathbb Z/2\mathbb Z \times \mathbb Z/2\mathbb Z,\mathbb Z/2\mathbb Z\bigr) \\
&\cong \mathbb Z/2\mathbb Z \times \mathbb Z/2\mathbb Z .
\end{align*}

The action of the generators $a, b$ of $\text{Hom} (Q_{2^n}, \mathbb F_2)$ on the generators $i$ and $j$ of the quaternion group can be expressed as 
\[
\begin{matrix}
  a(i) = \overline{1},  &
  a(j) = \overline{0},\\
  b(i) = \overline{1},  &
  b(j) =\overline{1}.
\end{matrix}
\] 
Additionally, the generators of $\text{Hom} (Q_{2^{n - 1}}, \mathbb F_2)$ can be defined the same way, using $a'$ and $b'$. There are three index 2 subgroups of $Q_{2^n}$. Two of the three are isomorphic to $Q_{2^{n - 1}}$, and one of them is isomorphic to $C_{2^{n - 1}}$
\[ 
\langle i^2, ij \rangle \cong Q_{2^{n - 1}},\qquad \langle i^2, j \rangle \cong Q_{2^{n - 1}}, \qquad \langle i \rangle \cong C_{2^{n - 1}}. 
\]

For the first $Q_{2^{n - 1}}$ subgroup, by restricting $a$ and $b$ to the generators of the subgroup, we can deduce,
\[\text{res}(a)= a' + b', \qquad \text{res}(b) = 0.\]
Similarly, for the second $Q_{2^{n - 1}}$ subgroup, we can deduce
\[  \text{res}(a)= 0,\qquad \text{res}(b) = a' + b'.\]
For the cyclic subgroup, we can deduce
\[ \text{res}(a)= x , \qquad \text{res}(b) = x. \]

\begin{prop}
    The negligible ideal of $H^*(Q_{2^n}, \mathbb F_2)$ for $n \geq 4$ is
    \begin{equation*}
        I(Q_{2^n},\F_2)=\langle a^3, ac, bc\rangle.
    \end{equation*}
\end{prop}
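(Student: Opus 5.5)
The plan is to compute the eventually negligible ideal with Theorem~\ref{detect on elem ab} first, and then show every generator of the claimed ideal is negligible using the Gysin sequence (Lemma~\ref{restriction les}) for the index-two subgroup $Q_{2^{n-1}}=\langle i^2,ij\rangle=\ker(b)$. This is done by induction on $n$, with base case $Q_8\subset Q_{16}$ handled by Proposition~\ref{Q8}.

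\textbf{Step 1: the eventually negligible ideal.} The unique elementary abelian $2$-subgroup of $Q_{2^n}$ is the center $Z=\langle i^{2^{n-2}}\rangle\cong\Z/2\Z$, with $H^*(Z,\F_2)=\F_2[t]$. Both $a$ and $b$ are trivial on $Z$, since the central involution is a square. The class $c$ restricts to $t^4$, because $c$ is the periodicity class. Since $I(Z,\F_2)=0$, Corollary~\ref{quillen-venkov} gives $N(Q_{2^n},\F_2)=\ker(\res_Z)=\langle a,b\rangle$, so $I(Q_{2^n},\F_2)\subseteq\langle a,b\rangle$.

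Now use $ab=0$ and $a^3=b^3$. These give $a^4=a\cdot b^3=0$ and $b^4=0$, so the homogeneous part of $\langle a,b\rangle$ in degrees $\geq 3$ is exactly $\langle a^3,ac,bc\rangle$. Its degree-three part is spanned by $a^3$, and its degree-five part is spanned by $ac$ and $bc$. By Theorem~\ref{nonneg-low-degree}, nothing in degrees $1$ or $2$ is negligible. Hence it suffices to prove that $a^3$, $ac$ and $bc$ are negligible.

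\textbf{Step 2: the Gysin sequence for $\ker(b)$.} Restriction to $\langle i^2,ij\rangle$ sends $a\mapsto a'+b'$ and $b\mapsto 0$. Lemma~\ref{restriction les} then identifies $\operatorname{Im}(\mathrm{cor}^d)$ with $\ker(\cup b\colon H^d\to H^{d+1})$.

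In degree $3$ we have $a^3\cdot b=0$, so this kernel is all of $H^3(Q_{2^n},\F_2)=\langle a^3\rangle$, which is nonzero. Thus $\mathrm{cor}^3$ is onto $\langle a^3\rangle$. All of $H^3$ of the subgroup is negligible:
\begin{itemize}
\item For $n-1\geq 4$, $H^3(Q_{2^{n-1}},\F_2)=\langle a'^3\rangle$, which is negligible by induction.
\item For $Q_8$, the relations give $a'^2b'=a'b'^2$ and $b'^3=0$, so $H^3(Q_8,\F_2)=\langle a'^2b'\rangle$, which is negligible by Proposition~\ref{Q8}.
\end{itemize}
Lemma~\ref{lemma:corestriction} therefore makes $a^3$ negligible.

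In degree $5$, the map $\cup b$ sends $ac\mapsto abc=0$ and $bc\mapsto b^2c$. Provided $b^2c\neq 0$, the kernel is $\langle ac\rangle$. Every class of $H^5$ of the subgroup lies in the span of $a'c'$ and $b'c'$, which are negligible by induction or by Proposition~\ref{Q8}. So $ac$ is the corestriction of a negligible class and is itself negligible. Running the same argument with the second subgroup $\langle i^2,j\rangle=\ker(a)$, where $a\mapsto 0$ and $b\mapsto a'+b'$, shows by symmetry that $bc$ is negligible.

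\textbf{Expected obstacle.} The delicate part is the explicit bookkeeping of the Gysin diagram. First, one must confirm that $b^2c\neq0$ in $H^6(Q_{2^n},\F_2)$; this follows from the Poincaré series and the fact that multiplication by $c$ is periodic. Second, one must confirm that $H^3$ and $H^5$ of the subgroup contain no non-negligible classes that corestriction could hit. This requires a separate check for the $Q_8$ base case, because its ring presentation differs from that of $Q_{2^n}$ with $n\geq4$. If a gap appears there, the fallback is the cyclic subgroup $\langle i\rangle$: there $xy$ is negligible by Proposition~\ref{cyclic-2-xy-negligible}, and Proposition~\ref{corestriction identity} controls its corestriction.
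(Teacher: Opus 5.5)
Your proof is correct and has the same skeleton as the paper's. Both arguments compute $N(Q_{2^n},\F_2)=\langle a,b\rangle$ via the center. Both use $ab=0$, $a^3=b^3$ and Theorem~\ref{nonneg-low-degree} for the upper bound. Both get the lower bound from Gysin sequences and corestriction of negligible classes, inducting from $Q_8$.

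The genuine difference is how you obtain $a^3$. The paper uses the third index-two subgroup $\langle i\rangle=\ker(a+b)$. There $\mathrm{cor}^2(y)=a^2+b^2$ and the projection formula give $\mathrm{cor}^3(xy)=a^3$, and $xy$ is negligible by Proposition~\ref{cyclic-2-xy-negligible}. You instead reuse $\ker(b)$: since $a^3b=0$, $\mathrm{cor}^3$ is surjective onto $H^3(Q_{2^n},\F_2)$, and all of $H^3$ of the subgroup is negligible (by induction, or by Proposition~\ref{Q8} for $Q_8$).

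What each route buys:
\begin{itemize}
\item Your version needs only the two quaternion subgroups and keeps the quaternion computation independent of the dihedral and cyclic results.
\item It also avoids tracking specific preimages under corestriction, because every class of the subgroup in degrees $3$ and $5$ is negligible.
\item The paper's version proves $a^3$ negligible for each $n$ directly, without the inductive hypothesis, at the price of importing the dihedral induction.
\end{itemize}

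The two checks you flag are routine. First, $b^2c\neq0$ because $c$ is a free polynomial variable over $\F_2[a,b]/\langle ab,a^3+b^3\rangle$, where $b^2\neq0$. Second, $\res_Z(c)\neq0$ can be justified as the paper does: otherwise Theorem~\ref{thm:quillen-venkov-nilpotence} would force $c$ to be nilpotent.
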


\begin{proof}
The proof is obtained by chasing the Gysin long exact sequences in Figures~\ref{Q16resC8}--\ref{Q32resQ16-2}.
In each case the chosen index-two subgroup is the kernel of the indicated degree-one character, so the connecting maps are cup product with that character, and exactness identifies the kernel of this cup-product map with the image of the corresponding corestriction map.
Figure~\ref{Q16resC8} shows that, in the base case $Q_{16}$, the negligible class $xy\in H^3(C_8,\mathbb F_2)$ corestricts to $a^3=b^3$, while Figures~\ref{Q16resQ8-1} and~\ref{Q16resQ8-2} show that $bc$ and $ac$ are likewise obtained by corestricting negligible classes from the relevant index-two subgroups.
Lemma~\ref{lemma:corestriction} therefore gives $a^3,ac,bc\in I(Q_{16},\mathbb F_2)$.
The same diagram chase in Figures~\ref{Q32resC16}--\ref{Q32resQ16-2} proves the inductive step.
Figure~\ref{Q32resC16} gives $a^3=b^3$, and Figures~\ref{Q32resQ16-1} and~\ref{Q32resQ16-2} give $ac$ and $bc$.
Hence $\langle a^3,ac,bc\rangle\subseteq I(Q_{2^n},\mathbb F_2)$.

For the reverse inclusion, we first compute the eventually negligible ideal. We claim that
$N(Q_{2^n},\mathbb F_2)=\langle a,b\rangle$ for $n\geq 4$. Recall that
$$ H^*(Q_{2^n},\mathbb F_2)
= \mathbb F_2[a,b,c]/\langle ab,\ a^3+b^3\rangle, \quad
\deg(a)=\deg(b)=1,\quad \deg(c)=4. $$
The group $Q_{2^n}$ has a unique subgroup of order $2$, namely
$C=\langle z\rangle\cong C_2$, where $z=i^{2^{n-2}}$.
Thus, $C$ is the only non-trivial elementary abelian $2$-subgroup of $Q_{2^n}$.
By Corollary~\ref{quillen-venkov}, a class $u\in H^*(Q_{2^n},\mathbb F_2)$ is eventually negligible if and only if $\mathrm{res}_C^{Q_{2^n}}(u)$ is negligible.
Since $H^*(C,\mathbb F_2)\cong \mathbb F_2[t]$, with $\deg(t)=1$, and $I(C,\mathbb F_2)=0$ by Theorem~\ref{negideal}, this gives
$$ u\in N(Q_{2^n},\mathbb F_2) \iff\mathrm{res}_C^{Q_{2^n}}(u)=0.$$
Hence, $N(Q_{2^n},\mathbb F_2)=\ker(\mathrm{res}_C^{Q_{2^n}})$. We now compute this kernel.
Since $a$ and $b$ are degree-one classes, they are homomorphisms $Q_{2^n}\to \mathbb F_2$.
The element $z=i^{2^{n-2}}$ is an even power of $i$, so every degree-one class vanishes on $z$.
Hence, $\mathrm{res}_C^{Q_{2^n}}(a)=\mathrm{res}_C^{Q_{2^n}}(b)=0$, so $\langle a,b\rangle\subseteq \ker(\mathrm{res}_C^{Q_{2^n}})$.
Further, since $\deg(c)=4$, we have $\res_C^{Q_{2^n}}(c)\in H^4(C,\mathbb F_2)=\mathbb F_2\cdot t^4$, so $\mathrm{res}_C^{Q_{2^n}}(c)$ is either $0$ or $t^4$.
If it were $0$, then $c$ would restrict trivially to every elementary abelian $2$-subgroup of
$Q_{2^n}$, so Theorem~\ref{thm:quillen-venkov-nilpotence} would imply
that $c$ is nilpotent. But
$$ H^*(Q_{2^n},\mathbb F_2)/\langle a,b\rangle\cong \mathbb F_2[c],$$
so no positive power of $c$ is zero. Thus $\mathrm{res}_C^{Q_{2^n}}(c)\neq 0$,
and hence $\mathrm{res}_C^{Q_{2^n}}(c)=t^4$. After quotienting by
$\langle a,b\rangle$, the induced map is
$$
\mathbb F_2[c]\to \mathbb F_2[t],
\qquad
c\mapsto t^4,
$$
which is injective. Therefore
$\ker(\mathrm{res}_C^{Q_{2^n}})=\langle a,b\rangle$, and hence
$$
N(Q_{2^n},\mathbb F_2)=\langle a,b\rangle.
$$

Let $u\in I(Q_{2^n},\mathbb F_2)$ be homogeneous.
Since every negligible class is eventually negligible, $u\in N(Q_{2^n},\mathbb F_2)=\langle a,b\rangle$.
If $\deg(u)=1$ or $\deg(u)=2$, then $u=0$ by Theorem~\ref{nonneg-low-degree}.
Hence $u\in\langle a^3,ac,bc\rangle$ in
degrees $0,1,$ and $2$.
Assume now that $\deg(u)\geq 3$.
In $H^*(Q_{2^n},\mathbb F_2)$ we have
$a^3=b^3$, and hence
$$ a^4=a(a^3)=a(b^3)=(ab)b^2=0, \qquad b^4=b(b^3)=b(a^3)=(ab)a^2=0. $$
Every homogeneous element of $\langle a,b\rangle$ is a sum of monomials which contain a factor of $a$ or $b$.
If such a monomial has degree at
least $3$ and contains no factor of $c$, then the relations above imply that it is a multiple of $a^3=b^3$.
If it contains a factor of $c$, then, since it also contains a factor of $a$ or $b$, it is a multiple of $ac$ or $bc$.
Therefore,
$$ \langle a,b\rangle\cap H^{\geq 3}(Q_{2^n},\mathbb F_2)
\subseteq \langle a^3,ac,bc\rangle, $$
and $u\in\langle a^3,ac,bc\rangle$.
Since every homogeneous negligible class lies in $\langle a^3,ac,bc\rangle$ and the negligible ideal is graded,
we obtain
$$ I(Q_{2^n},\mathbb F_2)\subseteq \langle  a^3,ac,bc\rangle,$$
proving the reverse inclusion.
\end{proof}

\section{Radicality for finite irreducible Coxeter groups}\label{conjecture-sec-coxeter}

We now complete the proof of Theorem~\ref{conjecture}.

\subsection{Dihedral Coxeter groups}
In Section \ref{dihedral}, we proved that all dihedral groups have radical mod-$2$ negligible ideal over $\Q$.
It remains to treat the exceptional groups $H_3$ and $H_4$.

\subsection{Exceptional group \texorpdfstring{$H_3$}{H3}} \label{h3}
\begin{rem}
In what follows, we use the notation $H_3$ and $H_4$ for the finite Coxeter
groups of types $H_3$ and $H_4$, respectively. Strictly speaking, $H_3$ and
$H_4$ denote Coxeter types or Coxeter diagrams, and the corresponding finite
Coxeter groups are generated by the simple reflections attached to those
diagrams. We follow the common convention of writing $H_3$ and $H_4$ for the
groups themselves.
\end{rem}
\begin{lemma}\label{radicality-h3}
Let $H_3$ denote the finite Coxeter group of type $H_3$. Then the
mod-$2$ negligible ideal $I(H_3,\mathbb F_2)$ over $\mathbb Q$ is radical.
\end{lemma}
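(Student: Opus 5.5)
The plan is to reduce to a subgroup of odd index whose negligible ideal is already known to be radical, as the outline in the introduction indicates. The group $H_3$ has order $120$ and is isomorphic to $A_5\times \Z/2\Z$, where the central factor is generated by $-1$. Its Sylow $2$-subgroup is $(\Z/2\Z)^3$, which has index $15$. Two odd-index subgroups are available. The first is the Sylow subgroup $E\cong(\Z/2\Z)^3$ itself, whose negligible ideal is radical by Proposition~\ref{elem ab 2-group vanishing} together with the finite-field Nullstellensatz. The second is $A_4\times\Z/2\Z$, of index $5$, which is the Weyl group of type $A_1\times A_2$; equivalently, it is a product of Weyl groups to which Theorem~\ref{Weylneg} applies. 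The simplest choice is $E$, and I will use that.

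The key step is a general lemma: if $H\le G$ has index prime to $2$ and $I(H,\F_2)$ is radical, then $I(G,\F_2)$ is radical. I will prove it from Lemma~\ref{intersection lemma}. Restriction embeds $H^*(G,\F_2)$ into $H^*(H,\F_2)$, and
\[
I(G,\F_2)=H^*(G,\F_2)\cap I(H,\F_2).
\]
Suppose $u\in H^*(G,\F_2)$ satisfies $u^k\in I(G,\F_2)$. Then $\res(u)^k=\res(u^k)$ lies in $I(H,\F_2)$ by Lemma~\ref{half conjecture}. Radicality of $I(H,\F_2)$ gives $\res(u)\in I(H,\F_2)$. Hence $\res(u)$ lies in $\res(H^*(G,\F_2))\cap I(H,\F_2)$, and therefore $u\in I(G,\F_2)$. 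This is the same argument as in the last line of the proof of Proposition~\ref{dihedralm2}: the preimage of a radical ideal under a ring map is radical.

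Applying the lemma with $G=H_3$ and $H=E$ finishes the proof. The only subtlety is homogeneity. The ideal $I$ is graded, but the radical of a graded ideal in a graded-commutative $\F_2$-algebra is again graded, so it suffices to treat homogeneous $u$. In any case, the preimage argument above works for arbitrary elements.

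The main obstacle is justifying the group-theoretic input: that $H_3\cong A_5\times\Z/2\Z$, and that its Sylow $2$-subgroup is elementary abelian of rank $3$ (a Klein four-group in $A_5$ times the center) with odd index $15$. This is standard, since $H_3$ is the full symmetry group of the icosahedron. If one prefers to avoid identifying the Sylow subgroup, one can instead use the index-$5$ subgroup $A_4\times\Z/2\Z\cong W(A_2)\times W(A_1)$ and cite Theorem~\ref{Weylneg}. That route additionally requires the fact that reducible Weyl groups, which are products, are covered by the theorem.
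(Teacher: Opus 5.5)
Your argument is correct and is essentially the paper's proof: identify $H_3\cong A_5\times\Z/2\Z$, restrict to the Sylow subgroup $E\cong(\Z/2\Z)^3$ of odd index $15$, and pull back radicality of $I(E,\F_2)$ through the intersection formula of Lemma~\ref{intersection lemma}. One correction to the optional alternative you mention but do not use: $A_4\times\Z/2\Z$ has order $24$, whereas $W(A_2)\times W(A_1)\cong S_3\times\Z/2\Z$ has order $12$, and $A_4\times\Z/2\Z$ is not a Coxeter group at all (its abelianization $\Z/6\Z$ is not an elementary abelian $2$-group), so Theorem~\ref{Weylneg} cannot be invoked for it directly.
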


\begin{proof}

The Coxeter group of type $H_3$ is isomorphic to $ A_5\times \mathbb Z/2\Z$ \cite[Section~6]{KOCA2006441}. Let
$V\leq A_5$ be the subgroup $$ V=  \{1,(12)(34),(13)(24),(14)(23)\}.$$
Then $V$ is a Klein four group, so
$V\cong \mathbb Z/2\Z\times \mathbb Z/2\Z.$
Moreover, since $|A_5|=60$, the subgroup $V$ is a Sylow-$2$ subgroup of $A_5$. Choose an isomorphism $H_3\cong A_5\times \mathbb Z/2\Z,$ and let $E\leq H_3$ be the subgroup corresponding to $ V\times \mathbb Z/2\Z.$ Then $ E\cong(\mathbb Z/2\Z)^3 $ and $[H_3:E]=|H_3|/|E|= 120/8=15.$ In particular, $[H_3:E]$ is prime to $2$. Since $E$ is elementary abelian,
Theorem~\ref{negideal} and Proposition~\ref{elem ab 2-group vanishing} imply that $I(E,\mathbb F_2)$ is radical.
Now one can apply Lemma~\ref{intersection lemma} to the inclusion
$E\leq H_3$, and restriction gives an injection
$$ \mathrm{res}_{E}^{H_3}: H^\ast(H_3,\mathbb F_2) \hookrightarrow H^\ast(E,\mathbb F_2).$$
Identifying $H^\ast(H_3,\mathbb F_2)$ with its image under restriction, Lemma~\ref{intersection lemma} gives
$$I(H_3,\mathbb F_2) =H^\ast(H_3,\mathbb F_2)\cap I(E,\mathbb F_2).$$

Now let $x\in H^\ast(H_3,\mathbb F_2)$ and suppose that
$ x^m\in I(H_3,\mathbb F_2)$ for some $m\geq 1$. Restricting to $E$, we get $$ \mathrm{res}_{E}^{H_3}(x)^m  =\mathrm{res}_{E}^{H_3}(x^m) \in I(E,\mathbb F_2).$$
Since $I(E,\mathbb F_2)$ is radical, it follows that
$\mathrm{res}_{E}^{H_3}(x)\in I(E,\mathbb F_2). $
But $\mathrm{res}_{E}^{H_3}(x)$ also lies in the image of
$H^\ast(H_3,\mathbb F_2)$ inside $H^\ast(E,\mathbb F_2)$. Hence, by the intersection formula, $ x\in I(H_3,\mathbb F_2). $
Therefore $I(H_3,\mathbb F_2)$ is radical.
\end{proof}

\subsection{Exceptional group \texorpdfstring{$H_4$}{H4}}\label{h4prod}

\begin{lemma}\label{radicality-h4}
The mod-$2$ negligible ideal $I(H_4,\mathbb F_2)$ over $\mathbb Q$ is radical.
\end{lemma}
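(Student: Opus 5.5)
The plan is to imitate the proof of Lemma~\ref{radicality-h3}. Instead of a Sylow subgroup, we use an odd-index subgroup $W\le H_4$ that is itself a Weyl group, so that Theorem~\ref{Weylneg} supplies radicality of $I(W,\mathbb F_2)$. We then transport radicality to $H_4$ by Lemma~\ref{intersection lemma}. The candidate is $W=W(D_4)$, of order $192=2^6\cdot 3$. Since $|H_4|=14400=2^6\cdot 225$, we have $[H_4:W]=75$, which is odd, and $W$ then contains a Sylow $2$-subgroup of $H_4$.

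\textbf{Step 1: a general transfer statement.} If $H\le G$ has odd index and $I(H,\mathbb F_2)$ is radical, then $I(G,\mathbb F_2)$ is radical. By Lemma~\ref{intersection lemma}, restriction embeds $H^*(G,\mathbb F_2)$ into $H^*(H,\mathbb F_2)$ and
\[
I(G,\mathbb F_2)=H^*(G,\mathbb F_2)\cap I(H,\mathbb F_2).
\]
Now let $x^m\in I(G,\mathbb F_2)$. Then $\mathrm{res}(x)^m\in I(H,\mathbb F_2)$, so radicality of $I(H,\mathbb F_2)$ gives $\mathrm{res}(x)\in I(H,\mathbb F_2)$. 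Since $\mathrm{res}(x)$ also lies in the image of $H^*(G,\mathbb F_2)$, the intersection formula gives $x\in I(G,\mathbb F_2)$. This is word for word the argument given for $H_3$.

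\textbf{Step 2: construct the embedding $W(D_4)\hookrightarrow H_4$.} I would use the quaternionic model. Let $2I$ be the group of $120$ unit icosians, and realize $H_4$ as the group of maps $q\mapsto \ell q r$ and $q\mapsto \ell\bar q r$ with $\ell,r\in 2I$; this has order $120\cdot 120/2\cdot 2=14400$. The binary tetrahedral group $2T\subset 2I$ consists of the $24$ Hurwitz units, which are the vertices of a $24$-cell inscribed in the $600$-cell. The maps with $\ell,r\in 2T$, together with the corresponding conjugated maps, form a subgroup $K$ of order $24\cdot 24/2\cdot 2=576$, so $[H_4:K]=25$. This $K$ preserves the $24$-cell and is the index-two subgroup $W(D_4)\rtimes \mathbb Z/3$ of $W(F_4)$. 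Here $W(F_4)/W(D_4)\cong S_3$, and $K$ is the preimage of the cyclic $\mathbb Z/3$, which acts by triality. Consequently $W(D_4)\le K\le H_4$, of index $3\cdot 25=75$.

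\textbf{Step 3: conclude.} By Theorem~\ref{Weylneg}, $I(W(D_4),\mathbb F_2)$ is radical, and Step 1 then gives the lemma.

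The main obstacle is Step 2: checking rigorously that $K$ contains a copy of $W(D_4)$. The index computation is easy; the point needing care is identifying $K$ with the index-two subgroup of $W(F_4)$ described above. To settle this I would exhibit the $D_4$ root system explicitly inside the Hurwitz units: take the $24$ units $\pm1,\pm i,\pm j,\pm k,\tfrac12(\pm1\pm i\pm j\pm k)$, rescaled, as the $D_4$ roots. I would then check that each reflection $q\mapsto -\alpha\bar q\alpha$ with $\alpha\in 2T$ lies in $K$, so that these reflections generate $W(D_4)\le K$.

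If this identification proves awkward, there is a fallback. Let $P$ be a Sylow $2$-subgroup of $H_4$ of order $64$. It is contained in $W(D_4)$ up to conjugacy, so one could apply Lemma~\ref{intersection lemma} directly to $P$ and prove radicality for $P$ via Corollary~\ref{quillen-venkov} and a Gysin computation. The route through Theorem~\ref{Weylneg} is cleaner, however, and is the one I would pursue first.
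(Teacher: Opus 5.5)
Your proposal is correct and follows the paper's proof. Both arguments exhibit $W(D_4)$ as a subgroup of $H_4$ of odd index $75$, get radicality of $I(W(D_4),\mathbb F_2)$ from Theorem~\ref{Weylneg}, and transfer it to $H_4$ through the intersection formula of Lemma~\ref{intersection lemma}, exactly as for $H_3$. The only difference is that the paper cites Koca et al.\ for $W(D_4)\rtimes\mathbb Z/3\le H_4$, while you construct the embedding explicitly. Your construction is sound: the reflections $q\mapsto-\alpha\bar q\alpha$ in the $24$ Hurwitz units $\alpha\in 2T\subset 2I$ form a $D_4$ root system, so they generate $W(D_4)$ inside $H_4$.
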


\begin{proof}
By \cite[Section~5]{KOCA2006441}, the Coxeter group of type $H_4$ contains
a subgroup of the form
$
        W(D_4)\rtimes \mathbb Z/3\Z.
$
In particular, $H_4$ contains a subgroup isomorphic to $W(D_4)$, the Weyl
group of type $D_4$. Here $D_4$ denotes the Coxeter type and $W(D_4)$ its Weyl group. This
should not be confused with the notation used above for dihedral groups. The Weyl group $W(D_4)$ has order $ |W(D_4)|=192,$ and the Coxeter group $H_4$ has order $|H_4|=14400.$ Since $[H_4:W(D_4)] = 14400/192 =75,$ Lemma~\ref{intersection lemma} gives the inclusion
$ W(D_4)\leq H_4.$ Thus restriction gives an injection
$$\mathrm{res}_{W(D_4)}^{H_4}: H^\ast(H_4,\mathbb F_2)
\hookrightarrow H^\ast(W(D_4),\mathbb F_2),$$
and, after identifying $H^\ast(H_4,\mathbb F_2)$ with its image under restriction, $$ I(H_4,\mathbb F_2)= H^\ast(H_4,\mathbb F_2)\cap I(W(D_4),\mathbb F_2).$$
Since $W(D_4)$ is a Weyl group, Theorem~\ref{Weylneg} implies that
$I(W(D_4),\mathbb F_2)$ is radical. Now let $x\in H^\ast(H_4,\mathbb F_2)$ and suppose that $x^m\in I(H_4,\mathbb F_2)$ for some $m\geq 1$. Restricting to $W(D_4)$,
we get $$ \mathrm{res}_{W(D_4)}^{H_4}(x)^m =\mathrm{res}_{W(D_4)}^{H_4}(x^m)\in I(W(D_4),\mathbb F_2).$$
Since $I(W(D_4),\mathbb F_2)$ is radical, it follows that
$\mathrm{res}_{W(D_4)}^{H_4}(x)\in I(W(D_4),\mathbb F_2).$
But $\mathrm{res}_{W(D_4)}^{H_4}(x)$ also lies in the image of
$H^\ast(H_4,\mathbb F_2)$ inside $H^\ast(W(D_4),\mathbb F_2)$. Hence, by
the intersection formula,
$x\in I(H_4,\mathbb F_2).$
Therefore $I(H_4,\mathbb F_2)$ is radical.
\end{proof}

\begin{proof}[Proof of Theorem~\ref{conjecture}]
By Theorem~\ref{Weylneg}, all Weyl groups have radical mod-$2$ negligible ideal over
$\mathbb Q$. By the classification of finite irreducible Coxeter groups, the
irreducible Coxeter groups which are not Weyl groups are the dihedral groups
$I_2(m)$ and the exceptional noncrystallographic groups $H_3$ and $H_4$.
The dihedral cases were proved in Section~\ref{dihedral}, the $H_3$ case was proved in
Lemma~\ref{radicality-h3}, and the $H_4$ case is Lemma~\ref{radicality-h4}. Hence the
result holds for all finite irreducible Coxeter groups.
\end{proof}

\begingroup
\raggedright
\sloppy

\endgroup

\clearpage
\appendix
\counterwithin{figure}{section}              
\renewcommand{\thefigure}{A\arabic{figure}}   
\setcounter{figure}{0}

\section{Gysin Long Exact Sequences (GLES)}\label{appendix}

\begingroup
  \let\clearpage\relax
  \renewcommand{\listfigurename}{}
  \listoffigures
\endgroup

\medskip
In figures \ref{Q16resQ8-1} and \ref{Q16resQ8-2}, the symbol $``\ast"$ denotes arbitrary elements of the indicated cohomology group. Their precise value plays no role in the computation; it is simply a placeholder indicating an unspecified class.

\begin{figure}[htbp]
    \centering
      \begin{center}
    \GLESfigure{%
      \begin{circuitikz}
         \tikzstyle{every node}=[font=\large]
\node [font=\Large] at (-1,16.5) {$H^1(D_{2^n}, \mathbb F_2)$};
\node [font=\Large] at (22.5,15) {$.$};
\draw [->, >=Stealth] (1,16.5) -- (2.75,16.5);
\node [font=\Large] at (5,16.5) {$H^1(D_{2^{n-1}}, \mathbb F_2)$};
\draw [->, >=Stealth] (7,16.5) -- (8.25,16.5);
\node [font=\Large] at (10.25,16.5) {$H^1(D_{2^n}, \mathbb F_2)$};
\draw [->, >=Stealth] (12.5,16.5) .. controls (22.75,11) and (-10.25,15) .. (-3,9) ;
\node [font=\Large] at (-1,9) {$H^2(D_{2^n}, \mathbb F_2)$};
\node [font=\Large] at (5,9) {$H^2(D_{2^{n-1}}, \mathbb F_2)$};
\node [font=\Large] at (10.25,9) {$H^2(D_{2^n}, \mathbb F_2)$};
\node [font=\Large] at (2.75,16) {};
\draw [->, >=Stealth] (1,9) -- (2.75,9);
\draw [->, >=Stealth] (7,9) -- (8.25,9);
\draw [->, >=Stealth] (12.5,8.75) .. controls (22.75,3.25) and (-10.25,7.25) .. (-3,1.25) ;
\node [font=\Large] at (-1,1.25) {$H^3(D_{2^n}, \mathbb F_2)$};
\node [font=\Large] at (5,1.25) {$H^3(D_{2^{n-1}}, \mathbb F_2)$};
\node [font=\Large] at (10.25,1.25) {$H^3(D_{2^n}, \mathbb F_2)$};
\draw [->, >=Stealth] (1,1.25) -- (2.75,1.25);
\draw [->, >=Stealth] (7,1.25) -- (8.25,1.25);
\draw [->, >=Stealth] (12.5,1) .. controls (22.75,-5) and (-10.25,-1) .. (-3,-7.25) ;
\node [font=\Large] at (-1,-7.25) {$H^4(D_{2^n}, \mathbb F_2)$};
\node [font=\Large] at (5,-7.25) {$H^4(D_{2^{n-1}}, \mathbb F_2)$};
\node [font=\Large] at (10.25,-7.25) {$H^4(D_{2^n}, \mathbb F_2)$};
\draw [->, >=Stealth] (1,-7.25) -- (2.75,-7.25);
\draw [->, >=Stealth] (7,-7.25) -- (8.25,-7.25);
\node [font=\normalsize] at (2,17) {$\mathrm{res}^1$};
\node [font=\normalsize] at (7.5,17) {$\mathrm{cor}^1$};
\node [font=\normalsize] at (2,9.5) {$\mathrm{res}^2$};
\node [font=\normalsize] at (7.75,9.5) {$\mathrm{cor}^2$};
\node [font=\normalsize] at (2,1.75) {$\mathrm{res}^3$};
\node [font=\normalsize] at (7.5,1.75) {$\mathrm{cor}^3$};
\node [font=\normalsize] at (2,-6.75) {$\mathrm{res}^4$};
\node [font=\normalsize] at (7.5,-6.75) {$\mathrm{cor}^4$};
\node [font=\large] at (0.25,15.5) {$u$};
\node [font=\large] at (4.75,15.5) {$v'$};

\node [font=\LARGE] at (3.25,16) {};
\node [font=\LARGE] at (3.25,16) {};
\draw [->, >=Stealth] (0.75,15.5) -- (4.25,15.5);
\node [font=\large] at (0.25,15) {$v$};
\node [font=\large] at (4.75,15) {$v'$};
\draw [->, >=Stealth] (0.75,15) -- (4.25,15);
\draw [->, >=Stealth] (5.25,15.5) -- (9.75,15.5);
\node [font=\large] at (10.25,15) {$u$};
\node [font=\large] at (10.25,15.5) {$0$};
\node [font=\large] at (6.3,15) {$u'+v', u'$};
\draw [->, >=Stealth] (7.2,15) -- (9.75,15);
\node [font=\LARGE] at (3,8.75) {};
\node [font=\large] at (0,8.25) {$u^2+v^2$};
\node [font=\large] at (4.75,8.25) {$0$};

\node [font=\LARGE] at (3,8.75) {};
\node [font=\LARGE] at (3.25,8.75) {};
\draw [->, >=Stealth] (0.75,8.25) -- (4.25,8.25);
\node [font=\large] at (0,7.75) {$v^2,u^2$};
\node [font=\large] at (4.75,7.75) {$(v')^2$};
\draw [->, >=Stealth] (0.75,7.75) -- (4.25,7.75);
\draw [->, >=Stealth] (5.25,7.75) -- (9.75,7.75);
\node [font=\large] at (10,8.25) {$0 $ or $u^2$};
\node [font=\large] at (10,7.75) {$0$};
\node [font=\large] at (5.5,8.25) {$w'$};
\draw [->, >=Stealth] (5.75,8.25) -- (9.25,8.25);

\node [font=\large] at (-0.5,7.25) {$w$};
\node [font=\large] at (4.7,7.25) {$w'+c_2(v')^2$};
\draw [->, >=Stealth] (0,7.25) -- (3.5,7.25);
\node [font=\large] at (10,7.25) {$(u)^2$};
\node [font=\large] at (6.6,7.25) {$(u')^2$};
\draw [->, >=Stealth] (7,7.25) -- (9.5,7.25);
\node [font=\large] at (0.5,0.5) {$u^3+v^3$};
\node [font=\large] at (5.25,0.5) {$0$};

\draw [->, >=Stealth] (1.25,0.5) -- (4.75,0.5);
\node [font=\large] at (0.5,0) {$v^3,u^3$};
\node [font=\large] at (5.25,0) {$(v')^3$};
\draw [->, >=Stealth] (1.25,0) -- (4.75,0);
\draw [->, >=Stealth] (5.75,0) -- (10.25,0);
\node [font=\large] at (10.5,0.5) {$u^3$};
\node [font=\large] at (10.5,0) {$0$};
\node [font=\large] at (6,0.5) {$u'^3$};
\draw [->, >=Stealth] (6.25,0.5) -- (9.75,0.5);

\node [font=\large] at (0,-0.5) {$wv$};
\node [font=\large] at (5,-0.5) {$v'res(w)$};
\draw [->, >=Stealth] (0.5,-0.5) -- (4,-0.5);
\draw [->, >=Stealth] (6,-0.5) -- (10.25,-0.5);
\node [font=\large] at (0,-1) {$wu$};
\draw [->, >=Stealth] (0.5,-1) -- (4,-1);
\node [font=\large] at (5,-1) {$v'res(w)$};
\node [font=\large] at (10.5,-0.5) {0};
\draw [->, >=Stealth] (6,-1) -- (10.25,-1);
\node [font=\large] at (10.5,-1) {0};
\node [font=\large] at (4.75,-1.5) {$u'w'$};
\node [font=\large] at (4.75,-2) {$v'w'$};
\draw [->, >=Stealth] (5.5,-1.5) -- (10.25,-1.5);
\draw [->, >=Stealth] (5.25,-2) -- (10.25,-2);
\node [font=\large] at (10.75,-1.5) {$uw$};
\node [font=\large] at (10.5,-2) {0};
\node [font=\large] at (-3.5,-8) {$u^3$};
\draw [->, >=Stealth] (-3,-8) -- (0,-8);
\node [font=\large] at (-3.5,-8.5) {$uw$};
\draw [->, >=Stealth] (-3,-8.5) -- (0,-8.5);
\node [font=\large] at (-3.75,-9) {$u^3+v^3$};
\draw [->, >=Stealth] (-3,-9) -- (-0.3,-9);
\node [font=\large] at (0.25,-8) {$0$};
\node [font=\large] at (0.25,-8.5) {$0$};
\node [font=\large] at (0.5,-9) {$u^4+v^4$};
\node [font=\large] at (7.25,12.75) {$\cup(u+v)$};
\node [font=\large] at (6.5,4.75) {$\cup(u+v)$};
\node [font=\large] at (6.5,-3.25) {$\cup(u+v)$};
      \end{circuitikz}}%
  \end{center}
    \caption{GLES $uw$ is negligible in $D_{2^n}$}
    \label{GysinD2^n(uplusv)}
\end{figure}

  \begin{figure}[htbp]       
    \centering
    \GLESfigure{%
      \begin{circuitikz}
                 \tikzstyle{every node}=[font=\large]
\node [font=\Large] at (-1.75,16.5) {$H^1(D_{2^{n+1}}, \mathbb F_2)$};

\draw [->, >=Stealth] (0.5,16.5) -- (2.5,16.5);
\node [font=\Large] at (4.75,16.5) {$H^1(\Z/2^n\Z, \mathbb F_2)$};
\draw [->, >=Stealth] (7.25,16.5) -- (8.55,16.5);
\node [font=\Large] at (10.75,16.5) {$H^1(D_{2^{n+1}}, \mathbb F_2)$};
\draw [->, >=Stealth] (13.5,16.5) .. controls (22.25,11) and (-10.75,15) .. (-4,9) ;
\node [font=\LARGE] at (3.25,16) {};
\node [font=\normalsize] at (1.75,16.9) {$\mathrm{res}^1$};
\node [font=\normalsize] at (7.75,16.8) {$\mathrm{cor}^1$};
\node [font=\large] at (0.25,15.5) {$u$};
\node [font=\large] at (4.75,15.5) {$x$};
\node [font=\LARGE] at (3.25,16) {};
\node [font=\LARGE] at (3.25,16) {};
\draw [->, >=Stealth] (0.75,15.5) -- (4.25,15.5);
\node [font=\large] at (0.25,15) {$v$};
\node [font=\large] at (4.75,15) {$0$};
\draw [->, >=Stealth] (0.75,15) -- (4.25,15);
\draw [->, >=Stealth] (5.25,15.5) -- (9.75,15.5);
\node [font=\large] at (10.25,15.5) {$0$};
\node [font=\large] at (0.25,6.5) {$w$};

\node [font=\large] at (7.25,12.75) {$\cup v$};

\node [font=\Large] at (-1.25,8.5) {$H^2(D_{2^{n+1}}, \mathbb F_2)$};
\draw [->, >=Stealth] (1,8.5) -- (3,8.5);
\node [font=\Large] at (5.25,8.5) {$H^2(\Z/2^n\Z, \mathbb F_2)$};
\draw [->, >=Stealth] (7.75,8.5) -- (9,8.5);
\node [font=\Large] at (11.25,8.5) {$H^2(D_{2^{n+1}}, \mathbb F_2)$};
\draw [->, >=Stealth] (14.25,8.5) .. controls (22.75,2.75) and (-10.25,6.75) .. (-3.75,0.5) ;
\node [font=\LARGE] at (3.75,8) {};
\node [font=\normalsize] at (2,8.75) {$\mathrm{res}^2$};
\node [font=\normalsize] at (8.25,8.75) {$\mathrm{cor}^2$};
\node [font=\large] at (0.25,7.5) {$u^2$};
\node [font=\large] at (5.5,7.5) {$x^2= 0$};

\node [font=\LARGE] at (3.75,8) {};
\node [font=\LARGE] at (3.7-5,8) {};
\draw [->, >=Stealth] (1.25,7.5) -- (4.75,7.5);
\node [font=\large] at (0.25,7) {$v^2$};
\node [font=\large] at (5.25,7) {$0$};
\draw [->, >=Stealth] (1.25,7) -- (4.75,7);
\draw [->, >=Stealth] (5.75,6.5) -- (10.25,6.5);
\node [font=\large] at (10.75,6.5) {$0$};
\node [font=\large] at (7.5,4.25) {$\cup v$};

\node [font=\large] at (-7,7.5) {$u$};
\node [font=\large] at (-2.5,7.5) {$uv$};

\draw [->, >=Stealth] (-6.5,7.5) -- (-3,7.5);
\node [font=\large] at (-7,7) {$v$};
\node [font=\large] at (-2.5,7) {$v^2$};
\draw [->, >=Stealth] (-6.5,7) -- (-3,7);

\draw [->, >=Stealth] (1.25,6.5) -- (4.75,6.5);
\node [font=\large] at (5.25,6.5) {$y$};
\node [font=\large] at (0.5,-1.75) {$wu$};
\node [font=\Large] at (-1,0.25) {$H^3(D_{2^{n+1}}, \mathbb F_2)$};
\draw [->, >=Stealth] (1.5,0.25) -- (3.25,0.25);
\node [font=\Large] at (5.5,0.25) {$H^3(\Z/2^n\Z, \mathbb F_2)$};
\draw [->, >=Stealth] (8,0.25) -- (9.25,0.25);
\node [font=\Large] at (11.5,0.25) {$H^3(D_{2^{n+1}}, \mathbb F_2)$};
\node [font=\Large] at (4,-0.25) {};
\node [font=\normalsize] at (2.5,0.5) {$\mathrm{res}^3$};
\node [font=\normalsize] at (8.5,0.5) {$\mathrm{cor}^3$};
\node [font=\large] at (0.5,-0.75) {$u^3$};
\node [font=\large] at (5.5,-0.75) {$0$};

\node [font=\LARGE] at (4,-0.25) {};
\node [font=\LARGE] at (4,-0.25) {};
\draw [->, >=Stealth] (1.5,-0.75) -- (5,-0.75);
\node [font=\large] at (0.5,-1.25) {$v^3$};
\node [font=\large] at (5.5,-1.25) {$0$};
\draw [->, >=Stealth] (1.5,-1.25) -- (5,-1.25);
\draw [->, >=Stealth] (6,-1.75) -- (10.5,-1.75);
\node [font=\large] at (11,-1.75) {$0$};

\node [font=\large] at (-2.25,-0.75) {$u^2v$};
\draw [->, >=Stealth] (-6.25,-0.75) -- (-2.75,-0.75);
\node [font=\large] at (-2.25,-1.25) {$v^3$};
\draw [->, >=Stealth] (-6.25,-1.25) -- (-2.75,-1.25);
\draw [->, >=Stealth] (1.5,-1.75) -- (5,-1.75);
\node [font=\large] at (5.5,-1.75) {$xy$};
\node [font=\large] at (-6.5,-0.75) {$u^2$};
\node [font=\large] at (-6.5,-1.75) {$w$};
\node [font=\large] at (-6.5,-1.25) {$v^2$};
\draw [->, >=Stealth] (-6.25,-1.75) -- (-2.75,-1.75);
\node [font=\large] at (-2.25,-1.75) {$vw$};
\node [font=\large] at (0.5,-2.25) {$wv$};
\draw [->, >=Stealth] (1.5,-2.25) -- (5,-2.25);
\node [font=\large] at (5.5,-2.25) {$0$};
      \end{circuitikz}
    }
    \caption{Gysin long exact sequence showing $xy$ is negligible in $\mathbb{Z}/2^n \mathbb{Z}$}
    \label{Gysincyclic}
  \end{figure}

      \begin{figure}[htbp]  
    \centering
    \GLESfigure{%
      \begin{circuitikz}
                 \tikzstyle{every node}=[font=\large]
\node [font=\Large] at (-1.75,16.5) {$H^1(Q_{16}, \mathbb F_2)$};

\draw [->, >=Stealth] (0.5,16.5) -- (2.5,16.5);
\node [font=\Large] at (4.75,16.5) {$H^1(C_{8 }, \mathbb F_2)$};
\draw [->, >=Stealth] (7.25,16.5) -- (8.55,16.5);
\node [font=\Large] at (10.75,16.5) {$H^1(Q_{16}, \mathbb F_2)$};
\draw [->, >=Stealth] (13.5,16.5) .. controls (22.25,11) and (-10.75,15) .. (-4,9) ;
\node [font=\LARGE] at (3.25,16) {};
\node [font=\normalsize] at (1.75,16.9) {$\mathrm{res}^1$};
\node [font=\normalsize] at (7.75,16.9) {$\mathrm{cor}^1$};
\node [font=\large] at (0.25,15.5) {$a+b$};
\node [font=\large] at (4.5,15.5) {$0$};

\node [font=\LARGE] at (3.25,16) {};
\node [font=\LARGE] at (3.25,16) {};
\draw [->, >=Stealth] (0.75,15.5) -- (4.25,15.5);

\node [font=\large] at (-0.75,15) {$a,b$};
\node [font=\large] at (4.75,15) {$x$};
\draw [->, >=Stealth] (0.75,15) -- (4.25,15);

\draw [->, >=Stealth] (5.25,15) -- (9.75,15);
\node [font=\large] at (10.25,15) {$0$};

\node [font=\large] at (0.25,6.5) {$a^2+b^2$};
\node [font=\large] at (0.25,7.5) {$a^2,b^2$};
\draw [->, >=Stealth] (0.75,7.5) -- (4.75,7.5);
\node [font=\large] at (5.25,7.5) {$0$};
\node [font=\large] at (7.25,13.5) {$\cup (a+b )$};

\node [font=\Large] at (-1.25,8.5) {$H^2(Q_{16}, \mathbb F_2)$};
\draw [->, >=Stealth] (1,8.5) -- (3,8.5);
\node [font=\Large] at (5.25,8.5) {$H^2(C_{8}, \mathbb F_2)$};
\draw [->, >=Stealth] (7.75,8.5) -- (9,8.5);
\node [font=\Large] at (11.25,8.5) {$H^2(Q_{16}, \mathbb F_2)$};
\draw [->, >=Stealth] (14.25,8.5) .. controls (22.75,2.75) and (-10.25,6.75) .. (-3.75,0.5) ;
\node [font=\LARGE] at (3.75,8) {};
\node [font=\normalsize] at (2,8.75) {$\mathrm{res}^2$};
\node [font=\normalsize] at (8.25,8.75) {$\mathrm{cor}^2$};

\node [font=\LARGE] at (3.75,8) {};
\node [font=\LARGE] at (3.7-5,8) {};

\node [font=\large] at (7.5,5.25) {$\cup (a+b)$};
\node [font=\large] at (5.75,7.5) {$y$};
\draw [->, >=Stealth] (6.25,7.5) -- (11.75,7.5);

\node [font=\large] at (12.75,7.5) {$a^2+b^2$};

\draw [->, >=Stealth] (1.25,6.5) -- (4.75,6.5);
\node [font=\large] at (5.25,6.5) {$0$};

\node [font=\Large] at (-1,0.25) {$H^3(Q_{16}, \mathbb F_2)$};
\draw [->, >=Stealth] (1.5,0.25) -- (3.25,0.25);
\node [font=\Large] at (5.5,0.25) {$H^3(C_{8}, \mathbb F_2)$};
\draw [->, >=Stealth] (8,0.25) -- (9.25,0.25);
\node [font=\Large] at (11.5,0.25) {$H^3(Q_{16}, \mathbb F_2)$};
\node [font=\Large] at (4,-0.25) {};
\node [font=\normalsize] at (2.5,0.5) {$\mathrm{res}^3$};
\node [font=\normalsize] at (8.5,0.5) {$\mathrm{cor}^3$};

\node [font=\large] at (5.5,-1.75) {$0$};

\node [font=\LARGE] at (4,-0.25) {};
\node [font=\LARGE] at (4,-0.25) {};

\draw [->, >=Stealth] (-1,-1.75) -- (5,-1.75);

\node [font=\large] at (6.75,-1.75) {$xy$};
\node [font=\large] at (11.5,-1.75) {$\text{cor}(\text{res}(a)y)=a^3=b^3$};
\draw [->, >=Stealth] (7.25,-1.75) -- (9,-1.75);

\node [font=\large] at (-2,-1.75) {$a^3$};

      \end{circuitikz}
    }
    \caption{GLES $a^3=b^3$ is negligible in $Q_{16}$}
    \label{Q16resC8}
  \end{figure}

      \begin{figure}[htbp] 
    \centering
    \GLESfigure{%
      \begin{circuitikz}
                 \tikzstyle{every node}=[font=\large]
\node [font=\Large] at (-1.75,16.5) {$H^1(Q_{16}, \mathbb F_2)$};

\draw [->, >=Stealth] (0.5,16.5) -- (2.5,16.5);
\node [font=\Large] at (4.75,16.5) {$H^1(Q_{8}, \mathbb F_2)$};
\draw [->, >=Stealth] (7.25,16.5) -- (8.55,16.5);
\node [font=\Large] at (10.75,16.5) {$H^1(Q_{16}, \mathbb F_2)$};
\draw [->, >=Stealth] (13.5,16.5) .. controls (22.25,11) and (-10.75,15) .. (-4,9) ;
\node [font=\LARGE] at (3.25,16) {};
\node [font=\normalsize] at (1.75,16.9) {$\mathrm{res}^1$};
\node [font=\normalsize] at (7.75,16.9) {$\mathrm{cor}^1$};
\node [font=\large] at (0.25,15.5) {$a$};
\node [font=\large] at (4.5,15.5) {$0$};
\node [font=\large] at (5.25,15.5) {$\ast,\ast$};
\node [font=\LARGE] at (3.25,16) {};
\node [font=\LARGE] at (3.25,16) {};
\draw [->, >=Stealth] (0.75,15.5) -- (4.25,15.5);
\draw [->, >=Stealth] (6,15.5) -- (9.75,15.5);
\node [font=\large] at (10,15.5) {$b$};
\node [font=\large] at (-0.75,15) {$a+b, b$};
\node [font=\large] at (4.75,15) {$\ast$};
\draw [->, >=Stealth] (0.75,15) -- (4.25,15);

\draw [->, >=Stealth] (5.25,15) -- (9.75,15);
\node [font=\large] at (10.25,15) {$0$};

\node [font=\large] at (0.25,6.5) {$\ast,\ast$};

\node [font=\large] at (7.25,13.5) {$\cup (b)$};

\node [font=\Large] at (-1.25,8.5) {$H^2(Q_{16}, \mathbb F_2)$};
\draw [->, >=Stealth] (1,8.5) -- (3,8.5);
\node [font=\Large] at (5.25,8.5) {$H^2(Q_{8}, \mathbb F_2)$};
\draw [->, >=Stealth] (7.75,8.5) -- (9,8.5);
\node [font=\Large] at (11.25,8.5) {$H^2(Q_{16}, \mathbb F_2)$};
\draw [->, >=Stealth] (14.25,8.5) .. controls (22.75,2.75) and (-10.25,6.75) .. (-3.75,0.5) ;
\node [font=\LARGE] at (3.75,8) {};
\node [font=\normalsize] at (2,8.75) {$\mathrm{res}^2$};
\node [font=\normalsize] at (8.25,8.75) {$\mathrm{cor}^2$};

\node [font=\LARGE] at (3.75,8) {};
\node [font=\LARGE] at (3.7-5,8) {};

\node [font=\large] at (7.5,5.25) {$\cup (b)$};
\node [font=\large] at (5.75,7.5) {$y$};
\draw [->, >=Stealth] (6.25,7.5) -- (11.75,7.5);

\node [font=\large] at (12.75,7.5) {$a^2+b^2$};

\draw [->, >=Stealth] (1.25,6.5) -- (4.75,6.5);
\node [font=\large] at (5.25,6.5) {$0$};

\node [font=\Large] at (-1,0.25) {$H^3(Q_{16}, \mathbb F_2)$};
\draw [->, >=Stealth] (1.5,0.25) -- (3.25,0.25);
\node [font=\Large] at (5.5,0.25) {$H^3(Q_{8}, \mathbb F_2)$};
\draw [->, >=Stealth] (8,0.25) -- (9.25,0.25);
\node [font=\Large] at (11.5,0.25) {$H^3(Q_{16}, \mathbb F_2)$};
\node [font=\Large] at (4,-0.25) {};
\node [font=\normalsize] at (2.5,0.5) {$\mathrm{res}^3$};
\node [font=\normalsize] at (8.5,0.5) {$\mathrm{cor}^3$};

\node [font=\large] at (5.5,-1.75) {$0$};

\node [font=\LARGE] at (4,-0.25) {};
\node [font=\LARGE] at (4,-0.25) {};

\draw [->, >=Stealth] (-1,-1.75) -- (5,-1.75);

\node [font=\large] at (-2,-1.75) {$a^3$};

\node [font=\Large] at (-1, -8) {$H^4(Q_{16}, \mathbb F_2)$};
\draw [->, >=Stealth] (1.5,-8) -- (3.5,-8);
\node [font=\Large] at (6,-8) {$H^4(Q_{8}, \mathbb F_2)$};
\draw [->, >=Stealth] (8.5,-8) -- (10,-8);
\node [font=\Large] at (12.5,-8) {$H^4(Q_{16}, \mathbb F_2)$};
\node [font=\normalsize] at (2.5,-7.5) {$\mathrm{res}^4$};
\node [font=\normalsize] at (9,-7.5) {$\mathrm{cor}^4$};

\node [font=\large] at (-0.5,-9.25) {$c$};
\node [font=\large] at (6,-9.25) {$z$};
\draw [->, >=Stealth] (-0.25,-9.25) -- (5.5,-9.25);
\draw [->, >=Stealth] (6.25,-9.25) -- (11,-9.25);
\node [font=\large] at (11.5,-9.25) {$0$};

\draw [->, >=Stealth] (14.5,-8) .. controls (20,-12.5) and (-10,-13) .. (-4,-16);
\node [font=\large] at (7.5,-11.5) {$\cup (b)$};

\node [font=\Large] at (-1, -16.5) {$H^5(Q_{16}, \mathbb F_2)$};
\draw [->, >=Stealth] (1.5,-16.5) -- (3.5,-16.5);
\node [font=\Large] at (6,-16.5) {$H^5(Q_{8}, \mathbb F_2)$};
\draw [->, >=Stealth] (8.5,-16.5) -- (10,-16.5);
\node [font=\Large] at (12.5,-16.5) {$H^5(Q_{16}, \mathbb F_2)$};
\node [font=\normalsize] at (2.5,-16) {$\mathrm{res}^5$};
\node [font=\normalsize] at (9,-16) {$\mathrm{cor}^5$};

\node [font=\large] at (6.5,-17.75) {$\ast z$};

\draw [->, >=Stealth] (7.25,-17.75) -- (11.75,-17.75);
\node [font=\large] at (13.5,-17.75) {$ \text{cor}(\ast \text{res}(c))=bc$};

\draw [->, >=Stealth] (13.5,0) .. controls (20,-4.5) and (-10,-3.5) .. (-4,-7.5);
\node [font=\large] at (7.5,-3.25) {$\cup (b)$};

      \end{circuitikz}
    }
    \caption{GLES $bc$ is negligible in $Q_{16}$}
    \label{Q16resQ8-1}
  \end{figure}

      \begin{figure}[htbp]       
    \centering
    \resizebox{\textwidth}{!}{%
      \begin{circuitikz}
                 \tikzstyle{every node}=[font=\large]
\node [font=\Large] at (-1.75,16.5) {$H^1(Q_{16}, \mathbb F_2)$};

\draw [->, >=Stealth] (0.5,16.5) -- (2.5,16.5);
\node [font=\Large] at (4.75,16.5) {$H^1(Q_{8}, \mathbb F_2)$};
\draw [->, >=Stealth] (7.25,16.5) -- (8.55,16.5);
\node [font=\Large] at (10.75,16.5) {$H^1(Q_{16}, \mathbb F_2)$};
\draw [->, >=Stealth] (13.5,16.5) .. controls (22.25,11) and (-10.75,15) .. (-4,9) ;
\node [font=\LARGE] at (3.25,16) {};
\node [font=\normalsize] at (1.75,16.9) {$\mathrm{res}^1$};
\node [font=\normalsize] at (7.75,16.9) {$\mathrm{cor}^1$};
\node [font=\large] at (0.25,15.5) {$b$};
\node [font=\large] at (4.5,15.5) {$0$};
\node [font=\large] at (5.25,15.5) {$\ast,\ast$};
\node [font=\LARGE] at (3.25,16) {};
\node [font=\LARGE] at (3.25,16) {};
\draw [->, >=Stealth] (0.75,15.5) -- (4.25,15.5);
\draw [->, >=Stealth] (6,15.5) -- (9.75,15.5);
\node [font=\large] at (10,15.5) {$a$};
\node [font=\large] at (-0.75,15) {$a+b, a$};
\node [font=\large] at (4.75,15) {$\ast$};
\draw [->, >=Stealth] (0.75,15) -- (4.25,15);

\draw [->, >=Stealth] (5.25,15) -- (9.75,15);
\node [font=\large] at (10.25,15) {$0$};

\node [font=\large] at (0.25,6.5) {$\ast,\ast$};

\node [font=\large] at (7.25,13.5) {$\cup (a)$};

\node [font=\Large] at (-1.25,8.5) {$H^2(Q_{16}, \mathbb F_2)$};
\draw [->, >=Stealth] (1,8.5) -- (3,8.5);
\node [font=\Large] at (5.25,8.5) {$H^2(Q_{8}, \mathbb F_2)$};
\draw [->, >=Stealth] (7.75,8.5) -- (9,8.5);
\node [font=\Large] at (11.25,8.5) {$H^2(Q_{16}, \mathbb F_2)$};
\draw [->, >=Stealth] (14.25,8.5) .. controls (22.75,2.75) and (-10.25,6.75) .. (-3.75,0.5) ;
\node [font=\LARGE] at (3.75,8) {};
\node [font=\normalsize] at (2,8.75) {$\mathrm{res}^2$};
\node [font=\normalsize] at (8.25,8.75) {$\mathrm{cor}^2$};

\node [font=\LARGE] at (3.75,8) {};
\node [font=\LARGE] at (3.7-5,8) {};

\node [font=\large] at (7.5,5.25) {$\cup (a)$};
\node [font=\large] at (5.75,7.5) {$y$};
\draw [->, >=Stealth] (6.25,7.5) -- (11.75,7.5);

\node [font=\large] at (12.75,7.5) {$a^2+b^2$};

\draw [->, >=Stealth] (1.25,6.5) -- (4.75,6.5);
\node [font=\large] at (5.25,6.5) {$0$};

\node [font=\Large] at (-1,0.25) {$H^3(Q_{16}, \mathbb F_2)$};
\draw [->, >=Stealth] (1.5,0.25) -- (3.25,0.25);
\node [font=\Large] at (5.5,0.25) {$H^3(Q_{8}, \mathbb F_2)$};
\draw [->, >=Stealth] (8,0.25) -- (9.25,0.25);
\node [font=\Large] at (11.5,0.25) {$H^3(Q_{16}, \mathbb F_2)$};
\node [font=\Large] at (4,-0.25) {};
\node [font=\normalsize] at (2.5,0.5) {$\mathrm{res}^3$};
\node [font=\normalsize] at (8.5,0.5) {$\mathrm{cor}^3$};

\node [font=\large] at (5.5,-1.75) {$0$};

\node [font=\LARGE] at (4,-0.25) {};
\node [font=\LARGE] at (4,-0.25) {};

\draw [->, >=Stealth] (-1,-1.75) -- (5,-1.75);

\node [font=\large] at (-2,-1.75) {$b^3$};

\node [font=\Large] at (-1, -8) {$H^4(Q_{16}, \mathbb F_2)$};
\draw [->, >=Stealth] (1.5,-8) -- (3.5,-8);
\node [font=\Large] at (6,-8) {$H^4(Q_{8}, \mathbb F_2)$};
\draw [->, >=Stealth] (8.5,-8) -- (10,-8);
\node [font=\Large] at (12.5,-8) {$H^4(Q_{16}, \mathbb F_2)$};
\node [font=\normalsize] at (2.5,-7.5) {$\mathrm{res}^4$};
\node [font=\normalsize] at (9,-7.5) {$\mathrm{cor}^4$};

\node [font=\large] at (-0.5,-9.25) {$c$};
\node [font=\large] at (6,-9.25) {$z$};
\draw [->, >=Stealth] (-0.25,-9.25) -- (5.5,-9.25);
\draw [->, >=Stealth] (6.25,-9.25) -- (11,-9.25);
\node [font=\large] at (11.5,-9.25) {$0$};

\draw [->, >=Stealth] (14.5,-8) .. controls (20,-12.5) and (-10,-13) .. (-4,-16);
\node [font=\large] at (7.5,-11.5) {$\cup (a)$};

\node [font=\Large] at (-1, -16.5) {$H^5(Q_{16}, \mathbb F_2)$};
\draw [->, >=Stealth] (1.5,-16.5) -- (3.5,-16.5);
\node [font=\Large] at (6,-16.5) {$H^5(Q_{8}, \mathbb F_2)$};
\draw [->, >=Stealth] (8.5,-16.5) -- (10,-16.5);
\node [font=\Large] at (12.5,-16.5) {$H^5(Q_{16}, \mathbb F_2)$};
\node [font=\normalsize] at (2.5,-16) {$\mathrm{res}^5$};
\node [font=\normalsize] at (9,-16) {$\mathrm{cor}^5$};

\node [font=\large] at (6.5,-17.75) {$\ast z$};

\draw [->, >=Stealth] (7.25,-17.75) -- (11.75,-17.75);
\node [font=\large] at (13.5,-17.75) {$\text{cor}(\ast\text{res}(c))=ac$};

\draw [->, >=Stealth] (13.5,0) .. controls (20,-4.5) and (-10,-3.5) .. (-4,-7.5);
\node [font=\large] at (7.5,-3.25) {$\cup (a)$};

      \end{circuitikz}
    }
    \caption{GLES $ac$ is negligible in $Q_{16}$}
    \label{Q16resQ8-2}
  \end{figure}

      \begin{figure}[htbp]      
    \centering
    \GLESfigure{%
      \begin{circuitikz}
                 \tikzstyle{every node}=[font=\large]
\node [font=\Large] at (-1.75,16.5) {$H^1(Q_{32}, \mathbb F_2)$};

\draw [->, >=Stealth] (0.5,16.5) -- (2.5,16.5);
\node [font=\Large] at (4.75,16.5) {$H^1(C_{16}, \mathbb F_2)$};
\draw [->, >=Stealth] (7.25,16.5) -- (8.55,16.5);
\node [font=\Large] at (10.75,16.5) {$H^1(Q_{32}, \mathbb F_2)$};
\draw [->, >=Stealth] (13.5,16.5) .. controls (22.25,11) and (-10.75,15) .. (-4,9) ;
\node [font=\LARGE] at (3.25,16) {};
\node [font=\normalsize] at (1.75,16.9) {$\mathrm{res}^1$};
\node [font=\normalsize] at (7.75,16.8) {$\mathrm{cor}^1$};
\node [font=\large] at (0.25,15.5) {$b$};
\node [font=\large] at (5.25,15.5) {$x$};
\node [font=\LARGE] at (3.25,16) {};
\node [font=\LARGE] at (3.25,16) {};
\draw [->, >=Stealth] (0.75,15.5) -- (4.25,15.5);
\draw [->, >=Stealth] (6,15.5) -- (9.75,15.5);
\node [font=\large] at (10,15.5) {$0$};
\node [font=\large] at (0.25,15) {$a$};
\node [font=\large] at (4.75,15) {$x$};
\draw [->, >=Stealth] (0.75,15) -- (4.25,15);

\node [font=\large] at (0.25,6.5) {$a^2+b^2$};

\node [font=\large] at (7.25,13.5) {$\cup (a+b)$};

\node [font=\Large] at (-1.25,8.5) {$H^2(Q_{32}, \mathbb F_2)$};
\draw [->, >=Stealth] (1,8.5) -- (3,8.5);
\node [font=\Large] at (5.25,8.5) {$H^2(C_{16}, \mathbb F_2)$};
\draw [->, >=Stealth] (7.75,8.5) -- (9,8.5);
\node [font=\Large] at (11.25,8.5) {$H^2(Q_{32}, \mathbb F_2)$};
\draw [->, >=Stealth] (14.25,8.5) .. controls (22.75,2.75) and (-10.25,6.75) .. (-3.75,0.5) ;
\node [font=\LARGE] at (3.75,8) {};
\node [font=\normalsize] at (2,8.75) {$\mathrm{res}^2$};
\node [font=\normalsize] at (8.25,8.75) {$\mathrm{cor}^2$};
\node [font=\large] at (0.25,7.5) {$a^2,b^2$};
\node [font=\large] at (5.75,7.5) {$0$};

\node [font=\LARGE] at (3.75,8) {};
\node [font=\LARGE] at (3.7-5,8) {};
\draw [->, >=Stealth] (1.25,7.5) -- (4.75,7.5);

\node [font=\large] at (12.75,7.5) {$a^2+b^2$};
\node [font=\large] at (7.5,5.25) {$\cup (a+b)$};
\node [font=\large] at (7.75,7.5) {$y$};
\draw [->, >=Stealth] (8.25,7.5) -- (11.75,7.5);

\node [font=\large] at (-7,7.5) {$a$};

\node [font=\large] at (-2.1,7.5) {$a^2+b^2$};

\draw [->, >=Stealth] (-6.5,7.5) -- (-3,7.5);
\node [font=\large] at (-7,7) {$b$};
\node [font=\large] at (-2.1,7) {$b^2$};
\draw [->, >=Stealth] (-6.5,7) -- (-3,7);

\draw [->, >=Stealth] (1.25,6.5) -- (4.75,6.5);
\node [font=\large] at (5.25,6.5) {$0$};

\node [font=\Large] at (-1,0.25) {$H^3(Q_{32}, \mathbb F_2)$};
\draw [->, >=Stealth] (1.5,0.25) -- (3.25,0.25);
\node [font=\Large] at (5.5,0.25) {$H^3(C_{16}, \mathbb F_2)$};
\draw [->, >=Stealth] (8,0.25) -- (9.25,0.25);
\node [font=\Large] at (11.5,0.25) {$H^3(Q_{32}, \mathbb F_2)$};
\node [font=\Large] at (4,-0.25) {};
\node [font=\normalsize] at (2.5,0.5) {$\mathrm{res}^3$};
\node [font=\normalsize] at (8.5,0.5) {$\mathrm{cor}^3$};

\node [font=\large] at (5.5,-0.75) {$0$};

\node [font=\LARGE] at (4,-0.25) {};
\node [font=\LARGE] at (4,-0.25) {};
\draw [->, >=Stealth] (-1,-0.75) -- (5,-0.75);

\node [font=\large] at (-1.75,-0.75) {$a^3=b^3$};
\draw [->, >=Stealth] (-6.25,-0.75) -- (-2.75,-0.75);

\node [font=\large] at (-6.5,-0.75) {$a^2$};

\node [font=\large] at (-6.5,-1.75) {$b^2$};

\node [font=\large] at (6.75,-1.75) {$xy$};
\node [font=\large] at (11.25,-1.75) {$a^3=b^3$};
\draw [->, >=Stealth] (7.5,-1.75) -- (10.5,-1.75);
\draw [->, >=Stealth] (-6.25,-1.75) -- (-2.75,-1.75);
\node [font=\large] at (-2,-1.75) {$0$};

      \end{circuitikz}
    }
    \caption{GLES $a^3=b^3$ is negligible in $Q_{32}$}
    \label{Q32resC16}
  \end{figure}

      \begin{figure}[htbp]     
    \centering
    \GLESfigure{%
      \begin{circuitikz}
                 \tikzstyle{every node}=[font=\large]
\node [font=\Large] at (-1.75,16.5) {$H^1(Q_{32}, \mathbb F_2)$};

\draw [->, >=Stealth] (0.5,16.5) -- (2.5,16.5);
\node [font=\Large] at (4.75,16.5) {$H^1(Q_{16}, \mathbb F_2)$};
\draw [->, >=Stealth] (7.25,16.5) -- (8.55,16.5);
\node [font=\Large] at (10.75,16.5) {$H^1(Q_{32}, \mathbb F_2)$};
\draw [->, >=Stealth] (13.5,16.5) .. controls (22.25,11) and (-10.75,15) .. (-4,9) ;
\node [font=\LARGE] at (3.25,16) {};
\node [font=\normalsize] at (1.75,16.9) {$\mathrm{res}^1$};
\node [font=\normalsize] at (7.75,16.9) {$\mathrm{cor}^1$};
\node [font=\large] at (0.25,15.5) {$a$};
\node [font=\large] at (5.25,15.5) {$a'+b'$};
\node [font=\LARGE] at (3.25,16) {};
\node [font=\LARGE] at (3.25,16) {};
\draw [->, >=Stealth] (0.75,15.5) -- (4.25,15.5);
\draw [->, >=Stealth] (6,15.5) -- (9.75,15.5);
\node [font=\large] at (10,15.5) {$0$};
\node [font=\large] at (0.25,15) {$b$};
\node [font=\large] at (4.75,15) {$0$};
\draw [->, >=Stealth] (0.75,15) -- (4.25,15);
\node [font=\large] at (5.75,15) {$a'$};
\node [font=\large] at (5.75,14.5) {$b'$};
\draw [->, >=Stealth] (6.25,14.5) -- (9.75,14.5);
\draw [->, >=Stealth] (6.25,15) -- (9.75,15);
\node [font=\large] at (10.25,15) {$a$};
\node [font=\large] at (10.25,14.5) {$a$};
\node [font=\large] at (0.25,6.5) {$b^2$};

\node [font=\large] at (7.25,13.5) {$\cup (b)$};

\node [font=\Large] at (-1.25,8.5) {$H^2(Q_{32}, \mathbb F_2)$};
\draw [->, >=Stealth] (1,8.5) -- (3,8.5);
\node [font=\Large] at (5.25,8.5) {$H^2(Q_{16}, \mathbb F_2)$};
\draw [->, >=Stealth] (7.75,8.5) -- (9,8.5);
\node [font=\Large] at (11.25,8.5) {$H^2(Q_{32}, \mathbb F_2)$};
\draw [->, >=Stealth] (14.25,8.5) .. controls (22.75,2.75) and (-10.25,6.75) .. (-3.75,0.5) ;
\node [font=\LARGE] at (3.75,8) {};
\node [font=\normalsize] at (2,8.75) {$\mathrm{res}^2$};
\node [font=\normalsize] at (8.25,8.75) {$\mathrm{cor}^2$};
\node [font=\large] at (0.25,7.5) {$a^2$};
\node [font=\large] at (5.75,7.5) {$a'^2+b'^2$};

\node [font=\LARGE] at (3.75,8) {};
\node [font=\LARGE] at (3.7-5,8) {};
\draw [->, >=Stealth] (1.25,7.5) -- (4.75,7.5);

\node [font=\large] at (12.25,7.5) {$a^2$};
\node [font=\large] at (7.5,5.25) {$\cup (b)$};
\node [font=\large] at (7.75,7.5) {$a',b'$};
\draw [->, >=Stealth] (8.25,7.5) -- (11.75,7.5);

\node [font=\large] at (-7,7.5) {$a$};
\node [font=\large] at (-7,6.5) {$a+b$};
\node [font=\large] at (-2.2,6.5) {$b^2$};
\draw [->, >=Stealth] (-6.3,6.5) -- (-3,6.5);
\node [font=\large] at (-2.1,7.5) {$0$};

\draw [->, >=Stealth] (-6.5,7.5) -- (-3,7.5);
\node [font=\large] at (-7,7) {$b$};
\node [font=\large] at (-2.1,7) {$b^2$};
\draw [->, >=Stealth] (-6.5,7) -- (-3,7);

\draw [->, >=Stealth] (1.25,6.5) -- (4.75,6.5);
\node [font=\large] at (5.25,6.5) {$0$};

\node [font=\Large] at (-1,0.25) {$H^3(Q_{32}, \mathbb F_2)$};
\draw [->, >=Stealth] (1.5,0.25) -- (3.25,0.25);
\node [font=\Large] at (5.5,0.25) {$H^3(Q_{16}, \mathbb F_2)$};
\draw [->, >=Stealth] (8,0.25) -- (9.25,0.25);
\node [font=\Large] at (11.5,0.25) {$H^3(Q_{32}, \mathbb F_2)$};
\node [font=\Large] at (4,-0.25) {};
\node [font=\normalsize] at (2.5,0.5) {$\mathrm{res}^3$};
\node [font=\normalsize] at (8.5,0.5) {$\mathrm{cor}^3$};

\node [font=\large] at (5.5,-0.75) {$0$};
\node [font=\large] at (5.5,-1.75) {$0$};

\node [font=\LARGE] at (4,-0.25) {};
\node [font=\LARGE] at (4,-0.25) {};
\draw [->, >=Stealth] (-1,-0.75) -- (5,-0.75);

\node [font=\large] at (-2,-0.75) {$0$};
\draw [->, >=Stealth] (-6.25,-0.75) -- (-2.75,-0.75);

\draw [->, >=Stealth] (-1,-1.75) -- (5,-1.75);
 \draw [->, >=Stealth] (-6.5,-2.75) -- (-2.75,-2.75);

\node [font=\large] at (-6.5,-0.75) {$a^2$};
\node [font=\large] at (-7.75,-2.75) {$a^2+b^2$};
\node [font=\large] at (-2,-2.75) {$b^3$};
\node [font=\large] at (0,-2.75) {$b^3=a^3$};
\draw [->, >=Stealth] (1.5,-2.75) -- (4.5,-2.75);
\node [font=\large] at (-6.5,-1.75) {$b^2$};
\node [font=\large] at (6.25,-2.75) {$a'^3+b'^3=0$};
\node [font=\large] at (6.75,-1.75) {$a'^3=b'^3$};
\node [font=\large] at (10.75,-1.75) {$a^3$};
\draw [->, >=Stealth] (8,-1.75) -- (10,-1.75);
\draw [->, >=Stealth] (-6.25,-1.75) -- (-2.75,-1.75);
\node [font=\large] at (-2,-1.75) {$b^3$};

\node [font=\Large] at (-1, -8) {$H^4(Q_{32}, \mathbb F_2)$};
\draw [->, >=Stealth] (1.5,-8) -- (3.5,-8);
\node [font=\Large] at (6,-8) {$H^4(Q_{16}, \mathbb F_2)$};
\draw [->, >=Stealth] (8.5,-8) -- (10,-8);
\node [font=\Large] at (12.5,-8) {$H^4(Q_{32}, \mathbb F_2)$};
\node [font=\normalsize] at (2.5,-7.5) {$\mathrm{res}^4$};
\node [font=\normalsize] at (9,-7.5) {$\mathrm{cor}^4$};

\node [font=\large] at (-0.5,-9.25) {$c$};
\node [font=\large] at (6,-9.25) {$c'$};
\draw [->, >=Stealth] (-0.25,-9.25) -- (5.5,-9.25);
\draw [->, >=Stealth] (6.25,-9.25) -- (11,-9.25);
\node [font=\large] at (11.5,-9.25) {$0$};

\draw [->, >=Stealth] (14.5,-8) .. controls (20,-12.5) and (-10,-13) .. (-4,-16);
\node [font=\large] at (7.5,-11.5) {$\cup (b)$};

\node [font=\Large] at (-1, -16.5) {$H^5(Q_{32}, \mathbb F_2)$};
\draw [->, >=Stealth] (1.5,-16.5) -- (3.5,-16.5);
\node [font=\Large] at (6,-16.5) {$H^5(Q_{16}, \mathbb F_2)$};
\draw [->, >=Stealth] (8.5,-16.5) -- (10,-16.5);
\node [font=\Large] at (12.5,-16.5) {$H^5(Q_{32}, \mathbb F_2)$};
\node [font=\normalsize] at (2.5,-16) {$\mathrm{res}^5$};
\node [font=\normalsize] at (9,-16) {$\mathrm{cor}^5$};

\node [font=\large] at (-6.5,-17.25) {$c$};
\node [font=\large] at (-2.25,-17.25) {$bc$};
\draw [->, >=Stealth] (-6,-17.25) -- (-3,-17.25);

\node [font=\large] at (-0.5,-17.25) {$ac$};
\node [font=\large] at (6.5,-17.25) {$a'c'+b'c'$};
\draw [->, >=Stealth] (0.5,-17.25) -- (5.5,-17.25);

\node [font=\large] at (-0.5,-17.75) {$bc$};
\node [font=\large] at (6,-17.75) {$0$};
\node [font=\large] at (7.5,-17.75) {$a'c',b'c'$};
\draw [->, >=Stealth] (0.5,-17.75) -- (5.5,-17.75);
\draw [->, >=Stealth] (8.75,-17.75) -- (12.25,-17.75);
\draw [->, >=Stealth] (8.5,-17.25) -- (12.25,-17.25);
\node [font=\large] at (12.75,-17.75) {$ac$};
\node [font=\large] at (12.75,-17.25) {$0$};
\draw [->, >=Stealth] (13.5,0) .. controls (20,-4.5) and (-10,-3.5) .. (-4,-7.5);
\node [font=\large] at (7.5,-4.25) {$\cup (b)$};

      \end{circuitikz}
    }
    \caption{GLES $ac$ is negligible in $Q_{32}$}
    \label{Q32resQ16-1}
  \end{figure}

      \begin{figure}[htbp]      
    \centering
    \resizebox{\textwidth}{!}{%
      \begin{circuitikz}
                 \tikzstyle{every node}=[font=\large]
\node [font=\Large] at (-1.75,16.5) {$H^1(Q_{32}, \mathbb F_2)$};

\draw [->, >=Stealth] (0.5,16.5) -- (2.5,16.5);
\node [font=\Large] at (4.75,16.5) {$H^1(Q_{16}, \mathbb F_2)$};
\draw [->, >=Stealth] (7.25,16.5) -- (8.55,16.5);
\node [font=\Large] at (10.75,16.5) {$H^1(Q_{32}, \mathbb F_2)$};
\draw [->, >=Stealth] (13.5,16.5) .. controls (22.25,11) and (-10.75,15) .. (-4,9) ;
\node [font=\LARGE] at (3.25,16) {};
\node [font=\normalsize] at (1.75,16.9) {$\mathrm{res}^1$};
\node [font=\normalsize] at (7.75,16.9) {$\mathrm{cor}^1$};
\node [font=\large] at (0.25,15.5) {$b$};
\node [font=\large] at (5.25,15.5) {$a'+b'$};
\node [font=\LARGE] at (3.25,16) {};
\node [font=\LARGE] at (3.25,16) {};
\draw [->, >=Stealth] (0.75,15.5) -- (4.25,15.5);
\draw [->, >=Stealth] (6,15.5) -- (9.75,15.5);
\node [font=\large] at (10,15.5) {$0$};
\node [font=\large] at (0.25,15) {$a$};
\node [font=\large] at (4.75,15) {$0$};
\draw [->, >=Stealth] (0.75,15) -- (4.25,15);
\node [font=\large] at (5.75,15) {$a'$};
\node [font=\large] at (5.75,14.5) {$b'$};
\draw [->, >=Stealth] (6.25,14.5) -- (9.75,14.5);
\draw [->, >=Stealth] (6.25,15) -- (9.75,15);
\node [font=\large] at (10.25,15) {$b$};
\node [font=\large] at (10.25,14.5) {$b$};
\node [font=\large] at (0.25,6.5) {$a^2$};

\node [font=\large] at (7.25,13.5) {$\cup (a)$};

\node [font=\Large] at (-1.25,8.5) {$H^2(Q_{32}, \mathbb F_2)$};
\draw [->, >=Stealth] (1,8.5) -- (3,8.5);
\node [font=\Large] at (5.25,8.5) {$H^2(Q_{16}, \mathbb F_2)$};
\draw [->, >=Stealth] (7.75,8.5) -- (9,8.5);
\node [font=\Large] at (11.25,8.5) {$H^2(Q_{32}, \mathbb F_2)$};
\draw [->, >=Stealth] (14.25,8.5) .. controls (22.75,2.75) and (-10.25,6.75) .. (-3.75,0.5) ;
\node [font=\LARGE] at (3.75,8) {};
\node [font=\normalsize] at (2,8.75) {$\mathrm{res}^2$};
\node [font=\normalsize] at (8.25,8.75) {$\mathrm{cor}^2$};
\node [font=\large] at (0.25,7.5) {$b^2$};
\node [font=\large] at (5.75,7.5) {$a'^2+b'^2$};

\node [font=\LARGE] at (3.75,8) {};
\node [font=\LARGE] at (3.7-5,8) {};
\draw [->, >=Stealth] (1.25,7.5) -- (4.75,7.5);

\node [font=\large] at (12.25,7.5) {$b^2$};
\node [font=\large] at (7.5,5.25) {$\cup (a)$};
\node [font=\large] at (7.75,7.5) {$a',b'$};
\draw [->, >=Stealth] (8.25,7.5) -- (11.75,7.5);

\node [font=\large] at (-7,7.5) {$a$};

\node [font=\large] at (-2.1,7.5) {$a^2$};

\draw [->, >=Stealth] (-6.5,7.5) -- (-3,7.5);
\node [font=\large] at (-7,7) {$b$};
\node [font=\large] at (-2.1,7) {$0$};
\draw [->, >=Stealth] (-6.5,7) -- (-3,7);

\draw [->, >=Stealth] (1.25,6.5) -- (4.75,6.5);
\node [font=\large] at (5.25,6.5) {$0$};

\node [font=\Large] at (-1,0.25) {$H^3(Q_{32}, \mathbb F_2)$};
\draw [->, >=Stealth] (1.5,0.25) -- (3.25,0.25);
\node [font=\Large] at (5.5,0.25) {$H^3(Q_{16}, \mathbb F_2)$};
\draw [->, >=Stealth] (8,0.25) -- (9.25,0.25);
\node [font=\Large] at (11.5,0.25) {$H^3(Q_{32}, \mathbb F_2)$};
\node [font=\Large] at (4,-0.25) {};
\node [font=\normalsize] at (2.5,0.5) {$\mathrm{res}^3$};
\node [font=\normalsize] at (8.5,0.5) {$\mathrm{cor}^3$};

\node [font=\large] at (5.5,-0.75) {$0$};
\node [font=\large] at (5.5,-1.75) {$0$};

\node [font=\LARGE] at (4,-0.25) {};
\node [font=\LARGE] at (4,-0.25) {};
\draw [->, >=Stealth] (-1,-0.75) -- (5,-0.75);

\node [font=\large] at (-1.75,-0.75) {$a^3=b^3$};
\draw [->, >=Stealth] (-6.25,-0.75) -- (-2.75,-0.75);

\draw [->, >=Stealth] (-1,-1.75) -- (5,-1.75);

\node [font=\large] at (-6.5,-0.75) {$a^2$};

\node [font=\large] at (-6.5,-1.75) {$b^2$};

\node [font=\large] at (6.75,-1.75) {$a'^3=b'^3$};
\node [font=\large] at (10.75,-1.75) {$b^3$};
\draw [->, >=Stealth] (8,-1.75) -- (10,-1.75);
\draw [->, >=Stealth] (-6.25,-1.75) -- (-2.75,-1.75);
\node [font=\large] at (-2,-1.75) {$0$};
\node [font=\Large] at (-1, -8) {$H^4(Q_{32}, \mathbb F_2)$};
\draw [->, >=Stealth] (1.5,-8) -- (3.5,-8);
\node [font=\Large] at (6,-8) {$H^4(Q_{16}, \mathbb F_2)$};
\draw [->, >=Stealth] (8.5,-8) -- (10,-8);
\node [font=\Large] at (12.5,-8) {$H^4(Q_{32}, \mathbb F_2)$};
\node [font=\normalsize] at (2.5,-7.5) {$\mathrm{res}^4$};
\node [font=\normalsize] at (9,-7.5) {$\mathrm{cor}^4$};

\node [font=\large] at (-0.5,-9.25) {$c$};
\node [font=\large] at (6,-9.25) {$c'$};
\draw [->, >=Stealth] (-0.25,-9.25) -- (5.5,-9.25);
\draw [->, >=Stealth] (6.25,-9.25) -- (11,-9.25);
\node [font=\large] at (11.5,-9.25) {$0$};

\draw [->, >=Stealth] (14.5,-8) .. controls (20,-12.5) and (-10,-13) .. (-4,-16);
\node [font=\large] at (7.5,-11.5) {$\cup (a)$};

\node [font=\Large] at (-1, -16.5) {$H^5(Q_{32}, \mathbb F_2)$};
\draw [->, >=Stealth] (1.5,-16.5) -- (3.5,-16.5);
\node [font=\Large] at (6,-16.5) {$H^5(Q_{16}, \mathbb F_2)$};
\draw [->, >=Stealth] (8.5,-16.5) -- (10,-16.5);
\node [font=\Large] at (12.5,-16.5) {$H^5(Q_{32}, \mathbb F_2)$};
\node [font=\normalsize] at (2.5,-16) {$\mathrm{res}^5$};
\node [font=\normalsize] at (9,-16) {$\mathrm{cor}^5$};

\node [font=\large] at (-6.5,-17.25) {$c$};
\node [font=\large] at (-2.25,-17.25) {$ac$};
\draw [->, >=Stealth] (-6,-17.25) -- (-3,-17.25);

\node [font=\large] at (-0.5,-17.25) {$bc$};
\node [font=\large] at (6.5,-17.25) {$a'c'+b'c'$};
\draw [->, >=Stealth] (0.5,-17.25) -- (5.5,-17.25);

\node [font=\large] at (-0.5,-17.75) {$ac$};
\node [font=\large] at (6,-17.75) {$0$};
\node [font=\large] at (7.5,-17.75) {$a'c',b'c'$};
\draw [->, >=Stealth] (0.5,-17.75) -- (5.5,-17.75);
\draw [->, >=Stealth] (8.75,-17.75) -- (12.25,-17.75);
\draw [->, >=Stealth] (8.5,-17.25) -- (12.25,-17.25);
\node [font=\large] at (12.75,-17.75) {$bc$};
\node [font=\large] at (12.75,-17.25) {$0$};
\draw [->, >=Stealth] (13.5,0) .. controls (20,-4.5) and (-10,-3.5) .. (-4,-7.5);
\node [font=\large] at (7.5,-3.25) {$\cup (a)$};

      \end{circuitikz}
    }
    \caption{GLES $bc$ is negligible in $Q_{32}$}
    \label{Q32resQ16-2}
  \end{figure}

\end{document}